\theoremstyle{definition}
\newtheorem* {theorem*}{Theorem}
\newtheorem* {conjecture*}{Conjecture}
\newtheorem{theorem}{Theorem}[section]
\theoremstyle{definition}
\newtheorem* {example*}{Example}
\newtheorem{lemma}[theorem]{Lemma}
\theoremstyle{definition}
\newtheorem{definition}[theorem]{Definition}
\theoremstyle{definition}
\newtheorem* {notation}{Notation}
\newtheorem{conjecture}[theorem]{Conjecture}
\newtheorem{proposition}[theorem]{Proposition}
\newtheorem{corollary}[theorem]{Corollary}
\newtheorem{remark}[theorem]{Remark}
\theoremstyle{definition}
\newtheorem {example}[theorem]{Example}
\theoremstyle{definition}
\theoremstyle{definition}
\theoremstyle{definition}
\theoremstyle{definition}
\newcommand{\ytabc}[2]{
\ytableausetup{boxsize = #1,aligntableaux=center}
{\small\begin{ytableau}  #2  \end{ytableau}}
}
\newcommand{\ytabb}[1]{
\ytableausetup{boxsize = .55cm,aligntableaux=center}
{\small\begin{ytableau}  #1  \end{ytableau}}
}
\newcommand{\ytab}[1]{
\ytableausetup{boxsize = .4cm,aligntableaux=center}
{\small\begin{ytableau}  #1  \end{ytableau}}
}
\def\modu{\ (\mathrm{mod}\ }
\def\({\left(}
\def\){\right)}
\newcommand{\cC}{\mathcal{C}}
\newcommand{\cD}{\mathcal{D}}
\def\cX{\mathcal{X}}
\def\NN{\mathbb{N}}
\def\ZZ{\mathbb{Z}}
\def\ch{\mathrm{ch}}
\def\barr{\begin{array}}
\def\earr{\end{array}}
\def\ba{\begin{aligned}}
\def\ea{\end{aligned}}
\def\be{\begin{equation}}
\def\ee{\end{equation}}
\def\quand{\quad\text{and}\quad}
\def\quord{\quad\text{or}\quad}
\newcommand{\gl}{\mathfrak{gl}}
\newcommand{\Sym}{\operatorname{Sym}}
\def\PP{\mathbb{P}}
\def\ben{\begin{enumerate}}
\def\een{\end{enumerate}}
\def\bei{\begin{itemize}}
\def\eei{\end{itemize}}
\def\cE{\mathcal E}
\def\e{\textbf{e}}
\newcommand{\xRightarrow}[2][]{\ext@arrow 0359\Rightarrowfill@{#1}{#2}}
\newcommand{\cB}{\mathcal{B}}
\def\arcstart{\ \xy<0cm,-.06cm>\xymatrix@R=.1cm@C=.10cm }
\newcommand{\arcstartc}[1]{\ \xy<0cm,-.15cm>\xymatrix@R=.1cm@C=#1cm}
\def\bar{\overline}
\definecolor{darkred}{rgb}{0.7,0,0} 
\newcommand{\defn}[1]{{\color{darkred}\emph{#1}}} 
\newcommand{\weight}{\operatorname{wt}}
\def\BB{\mathbb{B}}
\def\q{\mathfrak{q}}
\def\Tab{\textsf{Tab}}
\def\DTab{\textsf{DecTab}}
\def\ShTab{\textsf{ShTab}}
\def\SShTab{\textsf{ShTab}^+}
\def\row{\mathsf{row}}
\def\revrow{\mathsf{revrow}}
\def\D{\mathsf{D}}
\def\SD{\mathsf{SD}}
\def\SetShTab{\mathsf{SetShTab}}
\def\SetDTab{\mathsf{SetDecTab}}
\def\GP{G\hspace{-0.5mm}P}
\def\GQ{G\hspace{-0.5mm}Q}
\def\Thighest{T^{\mathsf{highest}}}
\def\wGP{\Sigma}
\def\sqgln{\sqrt{\gl_n}}
\def\sqqn{\sqrt{\q_n}}
\def\SetTab{\mathsf{SetTab}}
\def\SS{\mathbb{S}}
\def\SVWords{\mathsf{SetWords}}
\def\svcol{\mathsf{col}_{{SV}}}
\def\svrow{\row_{{SV}}}
\def\svrevrow{\revrow_{{SV}}}
\def\Words{\mathsf{Words}}
\def\Sym{\mathsf{Sym}}
\numberwithin{equation}{section}
\begin{document}
\title{Crystals for set-valued decomposition tableaux}
\author{
Eric MARBERG\thanks{
Department of Mathematics, HKUST, {\tt emarberg@ust.hk}.
}
\and
Kam Hung TONG\thanks{
Department of Mathematics, HKUST, {\tt khtongad@connect.ust.hk}.
}
}

\date{}

\maketitle

\begin{abstract}
We describe two crystal structures on set-valued decomposition tableaux. These provide the first examples of interesting ``$K$-theoretic'' crystals on shifted tableaux. Our first crystal is modeled on a similar construction of Monical, Pechenik, and Scrimshaw for semistandard (unshifted) set-valued tableaux. Our second crystal is adapted from the ``square root''  operators introduced by Yu on the same set. Neither of our shifted crystals is normal, but we conjecture that our second construction is connected with a unique highest weight element. These results lead to partial progress on a conjectural formula of Cho--Ikeda for $K$-theoretic Schur $P$-functions.   We also study a new category of ``square root crystals'' that includes our second construction and Yu's set-valued tableau crystals as examples. We observe that Buch's formula for the coefficients expanding products of symmetric Grothendieck functions has a simple description in terms of the tensor product for this category. 
\end{abstract}

\tableofcontents

\section{Introduction}

Work by Grantcharov et al. \cite{GJKKK15,GJKKK,MR2628823}
defines a family of 
\defn{crystals} for the queer Lie superalgebra $\q_n$. 
Concretely, these objects consist of certain directed acyclic graphs with labeled weights and weighted vertices. There is a natural tensor product for $\q_n$-crystals,
as well as a \defn{standard $\q_n$-crystal}.
The connected crystals that appear in tensor powers of standard crystal,
as well as arbitrary disjoint unions of such crystals, are called \defn{normal}.

It is shown in \cite{GJKKK} that every connected normal $\q_n$-crystal
has a unique highest weight element, whose weight is a strict partition $\lambda$
with at most $n$ parts, and that any two such crystals with the same highest weight are isomorphic. Conversely, the authors in \cite{GJKKK} provide a simple, explicit construction of a connected normal $\q_n$-crystal with  any feasible highest weight $\lambda$:
this is given by introducing natural crystal operators on all \defn{decomposition tableaux} of shape $\lambda$.

A \defn{decomposition tableau} is a certain filling of the shifted shape of a strict partition $\lambda$ by positive integers.
In such tableaux, 
each row must be a \defn{hook word}, meaning a weakly decreasing sequence followed by a (possibly empty) strictly increasing sequence.
Additionally, consecutive rows in a decomposition tableau must avoid the patterns represented in French notation as
 \[
 \ytabb{
 \none & \none & \cdots & b  \\
 \none & a  & \cdots & \  
 },
\quad
 \ytabb{
 \none & \cdots  & c & \cdots & b  \\
\ & \cdots & a & \cdots & \ 
 },
  \quad
 \ytabb{
 \none & \cdots  &x     \\
y & \cdots & z  
 },
 \quand
 \ytabb{
 \none & \cdots  &  & \cdots & x  \\
\ & \cdots & y & \cdots & z 
 }
 \]
  for all $a\leq b \leq c$ and $x<y<z$.
See
Section~\ref{prelim-sect} for the precise definitions.

Our purpose here is to describe new crystal structures on ``set-valued'' generalizations of decomposition tableaux.
Several authors  (for example, \cite{HawkesScrimshaw,MPS,PT2022,Yu}) have recently studied crystal structures on unshifted set-valued tableaux. The characters of these crystals give $K$-theoretic symmetric functions of independent interest. It has been an open problem to extend such constructions to shifted tableaux. 

Addressing this problem, we first show that the family of \defn{set-valued decomposition tableaux} 
$\SetDTab_n(\lambda)$
has an abstract $\q_n$-crystal structure. The elements of $\SetDTab_n(\lambda)$ are the set-valued shifted tableaux (that is, fillings of a shifted shape by nonempty subsets of $\{1,2,\dots,n\}$)  whose distributions all belong to $\DTab_n(\lambda)$. 
By a \defn{distribution} of a set-valued tableau, we mean any tableau of the same shape formed by replacing each set-valued entry by  one of its elements.
For example, we can draw an element of $\SetDTab_4(\lambda)$ for $\lambda =(3,2)$ in French notation as
\[
\ytableausetup{boxsize = .8cm,aligntableaux=center}
\begin{ytableau}
\none & 1\,2 & 3 
\\
4 & 3 & 2\,3\,4  
\end{ytableau} 
\quad
\text{since}\quad
\left\{\ytab{\none & 1 & 3 \\ 4 & 3 & 2}, 
\ytab{\none & 2 & 3 \\ 4 & 3 & 3},
\ytab{\none & 1 & 3 \\ 4 & 3 & 4},
\ytab{\none & 2 & 3 \\ 4 & 3 & 2},
\ytab{\none & 1 & 3 \\ 4 & 3 & 3},
\ytab{\none & 2 & 3 \\ 4 & 3 & 4}\right\} \subset \DTab_4(\lambda).
\]

Our $\q_n$-crystal structure on $\SetDTab_n(\lambda)$ is formally similar to the one in \cite{MPS} for unshifted set-valued tableaux. The details of its construction appear in Section~\ref{a-sm-sect}.

Cho and Ikeda \cite{IkedaMisc} have conjectured that the weight generating function for set-valued decomposition tableaux recovers the \defn{$K$-theoretic Schur $P$-function} $\GP_\lambda$ introduced in \cite{IkedaNaruse} (see Conjecture~\ref{Ik-conj}). By interpreting this generating function
as the character of our $\q_n$-crystal $\SetDTab_n(\lambda)$,
we are able to
 prove that it is at least
equal to $\GP_\lambda$ plus a (possibly infinite) linear combination of $\GP_\mu$'s with $|\mu|>|\lambda|$ (see Corollary~\ref{GP-cor}).

Our first crystal structure on $\SetDTab_n(\lambda)$, like the one in \cite{MPS}, is typically disconnected.
Yu \cite{Yu} introduced a modified set of ``square root'' crystal operators on set-valued words that square to a seminormal crystal structure. These operators determine a connected crystal when restricted to semistandard set-valued tableaux of a fixed shape (identified with their column reading words). It turns out that
Yu's crystal operators also make sense as operators on set-valued decomposition tableaux (now using the reverse row reading word), and provide an alternate ``square root'' crystal structure on these objects (see Theorem~\ref{sqqn-thm2}).
We conjecture that the resulting crystal, when equipped with additional ``square root'' queer crystals operators, is actually connected (see Conjecture~\ref{sqqn-conj}).

We also discuss an axiomatic framework to contain these constructions.
Specifically, we introduce definitions of abstract \defn{$\sqgln$- and $\sqqn$-crystals},
which include Yu's crystal of semistandard set-valued tableaux and our second crystal of set-valued decomposition tableaux as special cases. These ``crystals'' possess a natural tensor product and a plausible choice of a ``standard object,''
which give rise to families of \defn{normal $\sqgln$- and $\sqqn$-crystals}.
The crystals do not generally have 
highest weight properties that are as nice as in the classical case,
but they nevertheless display several interesting features (see, for example, Theorem~\ref{sq-tensor-thm2}, Theorem~\ref{buch-cor}, and Proposition~\ref{sqrt-char-prop}).
We discuss these nuances in Section~\ref{sqrt-crystal-tensor-sect}
and present several related conjectures in Section~\ref{conj-sect}.

\subsection*{Acknowledgments}

This work was partially supported by Hong Kong RGC grants 16306120 and 16304122. We thank Zach Hamaker, Takeshi Ikeda, Joel Lewis, Brendan Pawlowski, and Travis Scrimshaw for many useful discussions. We are especially grateful to Tianyi Yu for several comments on and corrections to the first draft of this paper, 
which led to the proofs of Theorems~\ref{sq-tensor-thm2}
 and \ref{buch-cor}.

\section{Preliminaries}\label{prelim-sect}

This section contains some background material on shifted tableaux and 
crystals from \cite{BumpSchilling,GJKKK,Macdonald}.
Let $\NN =\{0,1,2,\dots\}$ and $\PP=\{1,2,3,\dots\}$.
Fix $ n \in \NN$ and let $[n] = \{1,2,\dots,n\}$.

\subsection{Tableaux}\label{sh-sect}

A \defn{partition} is a weakly decreasing sequence of integers $\lambda = (\lambda_1 \geq \lambda_2 \geq \dots > 0)$ with finite sum. The nonzero numbers $\lambda_i$ are the \defn{parts} of $\lambda$; if these are all distinct then $\lambda$ is \defn{strict}.  Let $\ell(\lambda)$ be the number of nonzero parts of $\lambda$.
The \defn{diagram} of $\lambda$ is the set \[\D_\lambda := \{ (i,j) :  i\in  [\ell(\lambda)] \text{ and } j \in [\lambda_i]\}.\]
When $\lambda = (\lambda_1 > \lambda_2 > \dots > 0)$ is a strict partition, its
  \defn{shifted diagram}  is 
\[ \SD_\lambda:= \{ (i,i+j-1) : i\in  [\ell(\lambda)] \text{ and } j \in [\lambda_i]\} = \{ (i,i+j-1) : (i,j) \in \D_\lambda\}.\]
For the moment, we define a \defn{tableau} of shape $\lambda$  to be any map $ \D_\lambda \to \{1,2,3,\dots\}$. We often view this object as a filling of the positions in $\D_\lambda$, drawn as boxes, by positive integers.
Likewise, a \defn{shifted tableau} of shape $\lambda$  is a map $ \SD_\lambda \to \{1'<1<2'<2<\dots\}$. Here, one can view the \defn{primed numbers} $1'<2'<3'<\dots$ either as formal symbols
or as the half-integers $i' := i - \frac{1}{2}$.

If $T$ is a (shifted) tableau, then we  write $(i,j) \in T$ to indicate that $(i,j)$ belongs to the domain of $T$
and we let $T_{ij}$ denote the value assigned to this position.
We draw tableaux in French notation, so that row indices increase bottom to top and column indices increase left to right.
For example,
\be\label{tableau-ex}
R = \ytab{   3 & 3& 7 \\ 1 & 2 & 2 & 6},
\quad S = \ytab{  \none& 3 & 5& 7 \\ 1 & 2 & 4 & 6},
\quand 
T = \ytab{ \none & 2' & 2 & 4' \\ 1' & 1 & 1 & 4'}  
\ee
are all tableaux of shape $\lambda=(4,3)$, with $R_{23}=7$, $S_{23} = 5$, and $T_{23} = 2$. 
The tableau $R$ is unshifted while $S$ and $T$ are shifted. The \defn{(main) diagonal} of a shifted tableau
is the set of boxes $(i,j)$ in its domain with $i=j$.

A (shifted or unshifted) tableau is \defn{semistandard} if its  rows and columns are weakly increasing,
such that no primed entry is repeated in any row and no unprimed entry is repeated in any column.
The examples 
in \eqref{tableau-ex}
are all semistandard.
We write  $\Tab(\lambda)$ for the set of all semistandard tableaux of shape $\lambda$,
  $\SShTab(\lambda)$
 for the set of all semistandard shifted tableaux of shape $\lambda$,
 and $ \ShTab(\lambda)$ for the subset of elements in $ \SShTab(\lambda)$
with no primed entries on the diagonal.

\begin{notation}
Throughout, if $\mathsf{FAMILY}(\lambda)$ is any family of tableaux associated to a partition $\lambda$,
then we write $\mathsf{FAMILY}_n(\lambda)$ for the subset of $T \in \mathsf{FAMILY}(\lambda)$ with all entries at most $n$.
   \end{notation}

If $T$ is a (shifted) tableau, then   set
$ x^T := x_1^{a_1} x_2^{a_2} \cdots x_n^{a_n}$ where $a_k$ is the number of times $k$ or $k'$ appears in $T$.
The \defn{Schur function} of an arbitrary partition $\lambda$ is
\be\label{schur-eq}
\textstyle
s_\lambda := \sum_{T \in \Tab(\lambda)} x^T.
\ee
Similarly, 
the \defn{Schur $P$- and $Q$-functions}  of a strict partition $\lambda$ are  
\be\label{PQ-eq}
\textstyle
P_\lambda := \sum_{T \in \ShTab(\lambda)} x^T
\quand
Q_\lambda := \sum_{T \in \SShTab(\lambda)} x^T = 2^{\ell(\lambda)} P_\lambda .\ee
These power series all belong to the ring of bounded degree symmetric functions $\Sym$.
The Schur functions $\{ s_\lambda\}$ are a $\ZZ$-basis for $\Sym$,
while
 $\{P_\lambda\}$ and $\{Q_\lambda\}$ (as $\lambda$ varies over all strict partitions) are $\ZZ$-bases for two distinct subrings of $\Sym$  \cite[Chapter III, \S8]{Macdonald}.

The Schur $P$-functions have a second tableau generating function formula.
A \defn{hook word} is a finite sequence of positive integers $w=w_1w_2\cdots w_n$ 
such that $w_1 \geq w_2 \geq \dots \geq w_m < w_{m+1} < w_{m+2} <\dots <w_n$ for some $m\in[n]$.
 Given such a hook word, let ${w\downarrow} := w_1w_2\cdots w_m$ denote the \defn{decreasing part} and 
 let ${w\uparrow} := w_{m+1}w_{m+2}\cdots w_n$ denote the \defn{increasing part}.

 A \defn{decomposition tableau} of strict partition shape $\lambda$ 
 is a map $T : \SD_\lambda \to \{1,2,3,\dots\}$ such that if $\rho_i$ denotes row $i$ of $T$,
 then (1) each $\rho_i$ is a hook word and (2)  $\rho_i$ is a hook subword of maximal length in $\rho_{i+1}\rho_i$ for each $i \in [\ell(\lambda)-1]$.
This definition follows \cite{GJKKK} but differs from \cite{CNO,Serrano}, which uses the opposite weak/strict inequality convention for hook words.
Let $\DTab(\lambda)$ be the set of all decomposition tableaux of shape $\lambda$.
Then we have
\be\textstyle P_\lambda = \sum_{T \in \DTab(\lambda)} x^T \ee
by  \cite[Thm.~2.17]{Serrano} and \cite[Thm.~3.9]{CNO}.

 \begin{example}
We have
$ 
\ytab{ \none & 1 & 1 \\ 2 & 2 & 1 } \in  \DTab_3((3,2))
$ but
$  \ytab{ \none & 1 & 1 \\ 2 & 2 & 3 } \notin  \DTab_3((3,2)).
$
The second example is not a decomposition tableau because its row reading word 
$\rho_2\rho_1  = 11223$ contains the hook subword $1123$ which is longer than $\rho_1=223$.
 \end{example}
 
 It is useful to reformulate
  the maximal hook subword condition as follows:

 \begin{lemma}[{\cite[Prop.~2.3]{GJKKK}}] \label{decomptab-characterisation}
 Let $T$ be shifted tableau of shape $\lambda$ whose rows are each hook words.
 Then $T$ is a decomposition tableaux if and only if none of the following conditions holds for any $i \in [\ell(\lambda)-1]$ and $j,k \in [\lambda_{i+1}]$:
 \ben
 
  \item[(a)] $T_{i,i} \leq T_{i+1,i+k}$ or $ T_{i,i+j} \leq T_{i+1,i+k}  \leq  T_{i+1,i+j}$ when $j<k$, 
 \item[(b)] $T_{i+1,i+k} < T_{i,i} < T_{i,i+k}$ or $T_{i+1,i+k} < T_{i,i+j} < T_{i,i+k}$ when $j<k$.
  \een
 That is, we forbid rows $i$ and $i+1$ of $T$ from having configurations of entries 
 \[
 \ytabb{
 \none & \none & \cdots & b  \\
 \none & a  & \cdots & \  
 },
\quad
 \ytabb{
 \none & \cdots  & c & \cdots & b  \\
\ & \cdots & a & \cdots & \ 
 },
  \quad
 \ytabb{
 \none & \cdots  &x     \\
y & \cdots & z  
 },
 \quord
 \ytabb{
 \none & \cdots  &  & \cdots & x  \\
\ & \cdots & y & \cdots & z 
 }
 \]
  with $a\leq b \leq c$ and $x<y<z$.
  Here, the leftmost boxes are on the main diagonal and the ellipses ``$\cdots$'' indicate sequences of zero or more columns.
 \end{lemma}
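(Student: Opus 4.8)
The plan is to prove the equivalence one pair of consecutive rows at a time. Since both the maximal-hook-subword condition in the definition and the list of forbidden configurations concern only rows $i$ and $i+1$, I fix $i \in [\ell(\lambda)-1]$, write $\rho = \rho_i$ and $\sigma = \rho_{i+1}$, and set $a_p = T_{i,i+p-1}$ for $p \in [\lambda_i]$ and $b_q = T_{i+1,i+q}$ for $q \in [\lambda_{i+1}]$, so that $\rho = a_1 a_2 \cdots a_{\lambda_i}$ and $\sigma = b_1 b_2 \cdots b_{\lambda_{i+1}}$ are hook words with $a_1 = T_{i,i}$ on the diagonal. The first step is the elementary observation that \emph{any subword of a hook word is again a hook word}: the retained letters of the weakly decreasing part still precede the retained letters of the strictly increasing part, so the subword is again weakly-decreasing-then-strictly-increasing. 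Because $\rho$ is itself a hook subword of $\sigma\rho$ (its suffix) of length $\lambda_i$, this shows that $\rho$ fails to have maximal length exactly when $\sigma\rho$ admits a hook subword of length $\lambda_i + 1$. Thus the lemma reduces to the assertion that $\sigma\rho$ has a hook subword of length $\lambda_i + 1$ if and only if one of the configurations (a), (b) occurs.

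For the direction ``configuration $\Rightarrow$ longer subword'', I would exhibit an explicit hook subword of length $\lambda_i + 1$ in each case. If $a_1 \le b_k$ (case (a), first inequality), then $b_k \hs a_1 a_2 \cdots a_{\lambda_i}$ is a hook word, since $b_k$ is at least the largest letter $a_1$ of the decreasing part of $\rho$. If $a_{j+1} \le b_k \le b_j$ with $j < k$ (case (a), second inequality), then $b_k \le b_j$ with $j<k$ forces, via the hook shape of $\sigma$, the chain $b_1 \ge \cdots \ge b_j$; one then checks that $b_1 \cdots b_j \hs b_k \hs a_{j+1} a_{j+2} \cdots a_{\lambda_i}$ is a hook word of length $j + 1 + (\lambda_i - j) = \lambda_i + 1$. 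The configurations (b), which encode an ascent $a_1 < a_{k+1}$ or $a_{j+1} < a_{k+1}$ sitting above an entry $b_k$ strictly below it, are handled similarly but with more care: here $b_k$ is seated at the bottom of the strictly increasing part of the new word, and the extra length is recovered using the weakly decreasing portion of row $i+1$ together with the remaining entries of row $i$. These constructions are routine, but the bookkeeping in the (b) cases is the fussiest part of this direction.

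The converse --- that a hook subword of length $\lambda_i + 1$ forces one of the four configurations --- is where the real content lies, and I would attack it by contrapositive. Assuming none of (a), (b) holds, I first translate this into usable inequalities: the failure of (a) (first) gives $b_q < a_1$ for every $q$, which combined with the failure of (b) (first) yields $a_{k+1} \le a_1$ for all $k \in [\lambda_{i+1}]$, while the failure of the second parts of (a) and (b) constrains how the two rows may interleave. Given any hook subword $u = BA$ of $\sigma\rho$, with $B$ its letters taken from $\sigma$ and $A$ its letters taken from $\rho$, the goal becomes to show $|B| \le \lambda_i - |A|$, i.e.\ that $u$ omits at least as many entries of $\rho$ as it uses from $\sigma$; this gives $|u| \le \lambda_i$ as required.

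The main obstacle is precisely this final counting step. One cannot simply reduce to the case $|B| = 1$: there are examples where (b) (first) holds, every single-entry insertion fails to yield a hook word, and yet a longer hook subword still exists using several entries of $\sigma$ while omitting several entries of $\rho$. I therefore expect to construct an explicit injection from the letters of $B$ into the entries of $\rho$ omitted by $A$, splitting each letter of $B$ according to whether it lands in the weakly decreasing or the strictly increasing part of $u$, and invoking the failure of the second parts of (a) and (b) to forbid any entry of $\sigma$ from being inserted ``for free''. Making this matching simultaneously well-defined and injective, across the interaction between the two parts of the hook word, is the technical heart of the argument.
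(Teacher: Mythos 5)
First, a point of comparison: the paper does not prove this lemma at all --- it is imported directly from \cite[Prop.~2.3]{GJKKK} --- so your attempt has to be judged on its own rather than against an internal argument. On its own it is a plan with correct scaffolding but a missing core. What is right: the reduction to one pair of consecutive rows; the observation that every subword of a hook word is again a hook word (hence maximality fails iff $\sigma\rho$ has a hook subword of length exactly $\lambda_i+1$); the explicit witnesses $b_k\,a_1a_2\cdots a_{\lambda_i}$ and $b_1\cdots b_j\,b_k\,a_{j+1}\cdots a_{\lambda_i}$ for the two (a)-configurations; and the warning that the converse cannot be reduced to single-letter insertions. That warning is genuinely correct: for $\rho=5\,4\,3\,6$ and $\sigma=2\,2\,2$ (configuration (b) with $k=3$), every length-$5$ hook subword of $\sigma\rho$, such as $2\,2\,2\,3\,6$, uses all three letters of $\sigma$, so any correct converse argument must handle $|B|>1$.

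Now the gaps. The lesser one is in the forward direction: for the (b)-configurations you describe the intended word but never produce it. This is fixable in one stroke, and more uniformly than your sketch suggests: if $b_k<a_{j+1}<a_{k+1}$ (allowing $j=0$ with $a_1$ the diagonal entry), then $a_{k+1}$ must lie in the strictly increasing part of $\rho$ (otherwise $a_{j+1}\ge a_{k+1}$), so $b_1b_2\cdots b_k\,a_{j+1}\,a_{k+1}a_{k+2}\cdots a_{\lambda_i}$ is a subword of $\sigma\rho$ of length $\lambda_i+1$, and it is a hook word because $b_1\cdots b_k$ is a hook word whose last letter begins the strictly increasing chain $b_k<a_{j+1}<a_{k+1}<\cdots<a_{\lambda_i}$; no case split on the shape of $b_1\cdots b_k$ or on the ``weakly decreasing portion of row $i+1$'' is needed. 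The serious gap is the converse, which you yourself call the technical heart: you state the correct counting target (an injection from the letters of $B$ into the entries of $\rho$ omitted by $A$) and then merely announce that you ``expect to construct'' such an injection. No map is defined, injectivity and well-definedness are not addressed, and it is never made precise how the failures of (a)(ii) and (b)(ii) would actually enter the argument. Since that direction is where essentially all the content of the lemma lives, what you have written is a credible outline, not a proof; to complete it you must actually define the matching --- separately for the letters of $B$ landing in the decreasing and in the increasing part of the hook subword $u$ --- and verify it is injective, which is precisely the interaction you flag as difficult and then leave untouched.
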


\subsection{Abstract crystals}

Let $\cB$ be a set with maps $\weight  :  \cB\to \ZZ^n$
and 
$e_i,f_i  :  \cB \to \cB \sqcup \{0\}$  for $i \in [n-1]$,
where $0 \notin \cB$.
Write $\e_1,\e_2,\dots,\e_n\in\ZZ^n$ for the standard basis.

\begin{definition}
\label{crystal-def}
The set $\cB$ is a
 \defn{$\gl_n$-crystal}
if 
for all $i \in [n-1]$ and $b,c \in \cB$
 it holds that  $e_i(b) = c $ if and only if $ f_i(c) = b$, 
in which case $ \weight(c) = \weight(b) + \e_{i} -\e_{i+1}$.
\end{definition}

Assume $\cB$ is a $\gl_n$-crystal. Then the maps $e_i$ and $f_i$ encode a directed graph with vertex set $\cB$, to be called the \defn{crystal graph},
with an edge $b \xrightarrow{i} c$ if and only if $ f_i(b)=c$.
Define the \defn{string lengths} $\varepsilon_i, \varphi_i  :  \cB \to \{0,1,2,\dots\}\sqcup \{\infty\}$ 
by \be\label{string-eqs}
\varepsilon_i(b) := \sup\left\{ k\geq 0 \mid e_i^k(b) \neq 0\right\}
\text{ and }
\varphi_i(b) := \sup\left\{ k \geq 0: f_i^k(b) \neq 0\right\}.
\ee

\begin{definition} The $\gl_n$-crystal $\cB$ is \defn{seminormal}
if the string lengths always take finite values 
such that $\varphi_i(b) - \varepsilon_i(b) = \weight(b)_i - \weight(b)_{i+1}$ for all $b \in \cB$.
\end{definition}

If $\cB$ is finite then its \defn{character} is   
$ \ch(\cB) := \sum_{b \in \cB} x^{\weight(b)}$ where $x^{\weight(b)} := \prod_{i\in[n]} x_i^{\weight(b)_i}.$
The character is symmetric in $x_1,x_2,\dots,x_n$ if $\cB$ is seminormal \cite[\S2.6]{BumpSchilling}.

We refer to $\weight$ as the \defn{weight map}, to each $e_i$ as a \defn{raising operator}, and to each $f_i$ as a \defn{lowering operator}.
Each connected component of the crystal graph of $\cB$ may be viewed as a $\gl_n$-crystal
by restricting the weight map and crystal operators; these objects are called \defn{full subcrystals}.

\begin{example}\label{st-ex1}
The \defn{standard $\gl_n$-crystal} $\BB_n = \left\{ \boxed{i}: i \in [n]\right\}$ has
crystal graph
\[
    \begin{tikzpicture}[xscale=1.8, yscale=1,>=latex,baseline=(z.base)]
    \node at (0,0.0) (z) {};
      \node at (0,0) (T0) {$\boxed{1}$};
      \node at (1,0) (T1) {$\boxed{2}$};
      \node at (2,0) (T2) {$\boxed{3}$};
      \node at (3,0) (T3) {${\cdots}$};
      \node at (4,0) (T4) {$\boxed{n}$};
      \draw[->,thick]  (T0) -- (T1) node[midway,above,scale=0.75] {$1$};
      \draw[->,thick]  (T1) -- (T2) node[midway,above,scale=0.75] {$2$};
      \draw[->,thick]  (T2) -- (T3) node[midway,above,scale=0.75] {$3$};
      \draw[->,thick]  (T3) -- (T4) node[midway,above,scale=0.75] {$n-1$};
     \end{tikzpicture}
\quad\text{with }\weight(\boxed{i}):=\e_i.
\]
Its character is $\ch(\BB_n) = x_1+x_2+x_3+\dots+x_n$.
  \end{example}
  
  If $\cB$ and $\cC$ are $\gl$-crystals
then the set   $\cB \otimes \cC := \{ b\otimes c : b\in \cB,\ c\in \cC\}$
 of formal tensors has a unique $\gl_n$-crystal structure
 (which is seminormal if $\cB$ and $\cC$ are seminormal)
in which
$
\weight(b\otimes c) := \weight(b) + \weight(c)
$
and  
\be\label{ebc-eq}
e_i(b\otimes c) := \begin{cases}
b \otimes e_i(c) &\text{if }\varepsilon_i(b) \leq \varphi_i(c) \\
e_i(b) \otimes c &\text{if }\varepsilon_i(b) > \varphi_i(c)
\end{cases}
\quand
f_i(b\otimes c) := \begin{cases}
b \otimes f_i(c) &\text{if }\varepsilon_i(b) < \varphi_i(c) \\
f_i(b) \otimes c &\text{if }\varepsilon_i(b) \geq \varphi_i(c)
\end{cases}
\ee
for $i \in [n-1]$,
where we set $b\otimes 0 = 0\otimes c = 0$  \cite[\S2.3]{BumpSchilling}.
This follows the ``anti-Kashiwara convention,''
which reverses the tensor product order  in \cite{GJKKK15,GJKKK}.
The natural map $\cB \otimes (\cC \otimes \cD) \to (\cB \otimes \cC) \otimes \cD$ is a crystal isomorphism,
 so we can dispense with   parentheses in iterated tensor products.

A $\gl_n$-crystal is \defn{normal} if each of its full subcrystals is isomorphic to a full subcrystal of $\BB_n^{\otimes m}$ for some $m \in \NN$. Such crystals have particularly nice properties, such as Schur positive characters; see \cite[Thms.~3.2 and 8.6]{BumpSchilling} and \cite{Stembridge2003}.

A \defn{word} is a finite sequence of positive integers.
Let $\Words_n(m)$ be the set of $m$-letter words $w=w_1w_2\cdots w_m$ with $w_i \in [n]$. 
We identify each word $w\in \Words_n(m)$ with the tensor $w_1\otimes w_2 \otimes \cdots \otimes w_m $ in order to view $\Words_n(m)$ as a crystal isomorphic to $ \BB_n^{\otimes m}$.
This allows us to evaluate $\weight(w)$, $e_i(w)$, and $f_i(w)$ for $i \in[n-1]$ using the definition of the $\gl_n$-crystal $\BB_n^{\otimes m}$. The weight of $w$ under this convention is the usual integer vector whose $i$th component is the number of letters equal to $i$.

\begin{remark}\label{signa-rem}
The following \defn{signature rule} \cite[\S2.4]{BumpSchilling})
can be used to compute $e_i(w)$ and $f_i(w)$.
Suppose 
$w=w_1w_2\cdots w_m \in \Words_m(n)$ is a word
and $i \in [n-1]$. Mark each entry $w_k = i$ by a right parenthesis ``)"  and each entry $w_j = i+1$ 
by a left parenthesis ``(". The \defn{$i$-unpaired indices} in $w$ are the indices $j \in [m]$ with 
$w_j \in \{i, i+1\}$ that are not the positions of matching parentheses. 
\begin{itemize}
\item If no $i$-unpaired index $j$ of $w$ has $w_j=i+1$ then $e_i(w) = 0$.
\newline
Otherwise, if $j$ is the smallest such index, then $e_i(w) = w_1 \cdots (w_j-1)\cdots w_m$.

\item If no $i$-unpaired index $j$ of $w$ has $w_j=i$ then $f_i(w) = 0$.
\newline
Otherwise, if $j$ is the largest such index, then $f_i(w) = w_1 \cdots (w_j+1)\cdots w_m$.

\end{itemize}
\end{remark}

\subsection{Queer crystals}

Grantcharov et al. developed a theory of crystals for
the \defn{queer Lie superalgebra} $\q_n$ in \cite{GJKKK15,GJKKK,MR2628823}, which we review here. Assume $n\geq 2$ and 
let $\cB$ be a $\gl_n$-crystal with additional maps $e_{\bar 1},f_{\bar 1} : \cB \to \cB \sqcup \{0\}$.
Define $\varepsilon_{ \bar 1}, \varphi_{ \bar 1} : \cB \to \NN\sqcup \{\infty\}$ 
as in  \eqref{string-eqs}   with $i=\bar 1$.
Below, we say that one map $\phi : \cB \to \cB\sqcup \{0\}$ \defn{preserves} another map $\eta : \cB \to \cX$
if $\eta(\phi(b)) = \eta(b)$ whenever $\phi(b) \neq 0$.

\begin{definition}
\label{q-crystal-def}
The $\gl_n$-crystal $\cB$ is a
 \defn{$\q_n$-crystal} 
  if both of the following hold:
\ben
\item[(a)]  $e_{\bar 1}$, $f_{\bar 1}$ commute with $e_i$, $f_i$ while preserving $\varepsilon_i$, $\varphi_i$ for all $3\leq i \leq n-1$,
\item[(b)] if $b,c \in \cB$ then
$e_{\bar 1}(b) = c$ if and only if $f_{\bar 1}(c) = b,$
in which case
$\weight(c) = \weight(b) + \e_{1} -\e_{2}.$
 \een
\end{definition}

Assume $\cB$ is a $\q_n$-crystal. The corresponding \defn{$\q_n$-crystal graph}  has vertex set $\cB$ and edges $b \xrightarrow{i} c$ whenever $f_i(b) =c$ for any $i \in \{\bar 1,1,2,\dots,n-1\}$.

\begin{definition} 
A $\q_n$-crystal $\cB$ is \defn{seminormal}
if it is seminormal as a $\gl_n$-crystal and for all $b \in \cB$ one has
$\weight(b)\in \NN^n
$
and
$
\varphi_{\bar 1}(b) + \varepsilon_{\bar 1}(b)\leq 1$, with strict inequality
if and only if $\weight(b)_1 =\weight(b)_{2}= 0$.
\end{definition}

If $\cB$ is a finite seminormal $\q_n$-crystal then $\ch(B)$ is a $\ZZ$-linear combination of Schur $P$-polynomials $P_\lambda(x_1,x_2,\dots,x_n)$
by \cite[Prop. 2.5]{Marberg2019b}.

\begin{example}\label{st-ex2}
The \defn{standard $\q_n$-crystal} $\BB_n = \left\{ \boxed{i}: i \in [n]\right\}$ has
crystal graph
\[
    \begin{tikzpicture}[xscale=1.8, yscale=1,>=latex,baseline=(z.base)]
    \node at (0,0.0) (z) {};
      \node at (0,0) (T0) {$\boxed{1}$};
      \node at (1,0) (T1) {$\boxed{2}$};
      \node at (2,0) (T2) {$\boxed{3}$};
      \node at (3,0) (T3) {${\cdots}$};
      \node at (4,0) (T4) {$\boxed{n}$};
      \draw[->,darkred,thick]  (T0.15) -- (T1.165) node[midway,above,scale=0.75] {$ \overline1$};
      \draw[->,thick]  (T0) -- (T1) node[midway,below,scale=0.75] {$ 1$};
      \draw[->,thick]  (T1) -- (T2) node[midway,below,scale=0.75] {$ 2$};
      \draw[->,thick]  (T2) -- (T3) node[midway,below,scale=0.75] {$ 3$};
      \draw[->,thick]  (T3) -- (T4) node[midway,below,scale=0.75] {$ {n-1}$};
     \end{tikzpicture}
  \quad\text{with }\weight(\boxed{i}):=\e_i.
  \]
\end{example}

Suppose $\cB$ and $\cC$ are $\q_n$-crystals.
The set 
$\cB \otimes \cC$ already has a $\gl_n$-crystal structure. 
There is a unique way of viewing this object as a $\q_n$-crystal
with
\be\label{q-tensor1}
  e_{\overline 1}(b\otimes c) := \begin{cases} 
 b \otimes e_{\overline 1}(c)&\text{if }e_{\overline1}(b) = f_{\overline1}(b) = 0
 \\
  e_{\overline 1}(b) \otimes c
&\text{otherwise}
 \end{cases}
\ee
and
\be\label{q-tensor2}
  f_{\overline 1}(b\otimes c) := \begin{cases} 
 b \otimes f_{\overline 1}(c)&\text{if }e_{\overline1}(b) = f_{\overline1}(b) = 0
 \\
  f_{\overline 1}(b) \otimes c
&\text{otherwise}
 \end{cases}
\ee
where it is again understood that $b\otimes 0 = 0\otimes c = 0$ \cite[Thm.~1.8]{GJKKK}.
The natural map $\cB \otimes (\cC \otimes \cD) \to (\cB \otimes \cC) \otimes \cD$ is again an isomorphism,
and if $\cB$ and $\cC$ are seminormal then so is $\cB\otimes \cC$.

A $\q_n$-crystal is \defn{normal} if each of its full subcrystals is isomorphic to a full subcrystal of $\BB_n^{\otimes m}$ for some $m \in \NN$. As in the $\gl_n$-case, these crystals have many nice properties, such as Schur $P$-positive characters; see \cite[Thm.~2.5 and Cor.~4.6]{GJKKK}.

\begin{remark} The formulas below for 
the action $e_{\bar 1}$ and $f_{\bar 1}$  
act on $w=w_1w_2\cdots w_m\in\Words_n(m) \cong \BB_n^{\otimes m}$
are easy to check from \eqref{q-tensor1} and \eqref{q-tensor2} by induction on $m$; see also \cite{GHPS}:
\begin{itemize}
\item If $w$ has no $2$'s or if a $1$ appears before the first $2$, then $e_{\bar 1}(w)=0$.
\newline
Otherwise, $e_{\bar 1}(w)$ is formed from $w$ by changing the first $2$  to $1$.

\item If $w$ has no $1$'s or if a $2$ appears before the first $1$, then $f_{\bar 1}(w)=0$.
\newline
Otherwise, $f_{\bar 1}(w)$ is formed from $w$ by changing the first $1$ to $2$.

\end{itemize}
The   operators $e_i$ and $f_i$ for $i \in [n-1]$ act on $w$ exactly as in the $\gl_n$-case. 
\end{remark}

 The \defn{row reading word} of a shifted tableau $T$ is 
the word $\row(T)$ formed by reading the rows from left to right, but starting with last row.
The \defn{reverse row reading word} of  $T$ is  the reversal
of $\row(T)$; we denote this by $\revrow(T)$. 
\begin{example}
We have $\row\(\ytab{ \none & 2 & 1 \\ 2 & 2 & 3 }\) =21223$
and
$\revrow\(\ytab{ \none & 2 & 1 \\ 2 & 2 & 3 }\) =32212.$
\end{example}

A \defn{crystal embedding} is a weight-preserving injective map $\phi : \cB \to \cC$ between crystals 
that commutes with all crystal operators, in the sense that $\phi(e_i(b)) = e_i(\phi(b))$ and $\phi(f_i(b)) = f_i(\phi(b))$
for all $b \in \cB$ when we set $\phi(0) =0$.

\begin{example}\label{revrow-thm}
Let $\lambda$ be a strict partition of $m$ with at most $n$ parts.
There is a unique (normal) $\q_n$-crystal structure on $\DTab_n(\lambda)$ 
that makes $\revrow : \DTab_n(\lambda)\to \Words_n(m) \cong \BB_n^{\otimes m}$
into a $\q_n$-crystal embedding \cite[Thm.~2.5(a)]{GJKKK}.
The weight map for this crystal has $x^{\weight(T)} = x^T$.
\end{example}

 \section{First construction}

This section contains our first main result, which describes a ``set-valued'' analogue 
of the crystal on decomposition tableaux just introduced in Example~\ref{revrow-thm}.
We review some preliminaries on \defn{set-valued tableaux} in Section~\ref{sv-sect}
before defining the relevant crystal operators.

\subsection{Set-valued tableaux}\label{sv-sect}

A \defn{set-valued tableaux} of partition shape $\lambda$ is a filling $T$ of the diagram $\D_\lambda$ by finite, nonempty sets of $\PP=\{1,2,3,\dots\}$.
A \defn{set-valued shifted tableaux} of strict partition shape $\lambda$ is likewise a filling $T$ of   $\SD_\lambda$ by finite, nonempty sets of  $ \{1'<1<2'<2<\dots\}$.
A \defn{distribution} of a (shifted) set-valued tableau a tableau of the same shape formed by replacing every
set-valued entry by one of its elements. 
For example,
\[
\ytableausetup{boxsize = .65cm,aligntableaux=center}
T =
\begin{ytableau}
\none & 1 & 3 
\\
4 & 1'3 & 234  
\end{ytableau}
\quad\text{has 6 distributions}\quad
\ytableausetup{boxsize = .4cm,aligntableaux=center}
\begin{ytableau}
\none & 1 & 3 
\\
4 & 1' & 2  
\end{ytableau},
\
\begin{ytableau}
\none & 1 & 3 
\\
4 & 1' & 3  
\end{ytableau},
\
\begin{ytableau}
\none & 1 & 3 
\\
4 & 1' & 4  
\end{ytableau},
\
\begin{ytableau}
\none & 1 & 3 
\\
4 & 3 & 2  
\end{ytableau},
\
\begin{ytableau}
\none & 1 & 3 
\\
4 & 3 & 3  
\end{ytableau},
\
\begin{ytableau}
\none & 1 & 3 
\\
4 & 3 & 4  
\end{ytableau}.
\]
A set-valued (shifted) tableau is \defn{semistandard} if its distributions are all semistandard according to the definition in Section~\ref{sh-sect}.

Let $\SetTab(\lambda)$ be the set of all semistandard set-valued (unshifted) tableaux of shape $\lambda$.
Then let $\SetShTab^+(\lambda)$ be the set of all semistandard set-valued shifted tableaux of shape $\lambda$, and define $\SetShTab(\lambda)$ to be
the subset of such tableaux with no primed numbers appearing in any set-valued entries on the main diagonal.
Define $\SetTab_n(\lambda)$, $\SetShTab^+_n(\lambda)$, and $\SetShTab_n(\lambda)$
using our convention in Section~\ref{sh-sect}.
The last three sets are nonempty if and only if $\ell(\lambda)\leq n$.

Given any set-valued tableau $T$, let 
\[
\textstyle 
\weight(T) := (a_1,a_2,a_3,\dots)\quand x^T := \prod_{k \in \PP} x_k^{a_k} = x^{\weight(T)}
\]
 where $a_k$ is how many times $k$ or $k'$ appears in $T$.
The \defn{symmetric Grothendieck function} of $\lambda$  is
 \be G_\lambda =  \sum_{T \in \SetTab(\lambda)} x^T.\ee
Similarly, Ikeda and Naruse's
  \defn{$K$-theoretic Schur $P$- and $Q$-functions} of $\lambda$ are
\be
\GP_\lambda = \sum_{T \in \SetShTab(\lambda)} x^T 
\quand
\GQ_\lambda = \sum_{T \in \SetShTab^+(\lambda)} x^T. 
\ee
Often in the literature the definitions of these power series involve a bookkeeping parameter $\beta$. Here,
for simplicity, we have set $\beta=1$. The more general definition
is recovered by making the variable substitutions $x_i \mapsto \beta x_i$ and then dividing by $\beta^{|\lambda|}$.

\begin{remark}
Work of Buch \cite{Buch2002} shows that each $G_\lambda$ is symmetric with lowest degree term given by the Schur function $s_\lambda$.
Similarly, $\GP_\lambda $ and $\GQ_\lambda$ are both Schur positive symmetric functions, though of unbounded degree \cite[Thms.~3.27 and 3.40]{MarScr}.

Specializations of  $\GP_\lambda $ and $\GQ_\lambda$  give equivariant $K$-theory representatives for Schubert varieties in the maximal isotropic Grassmannians of orthogonal and symplectic types \cite[Cor.~8.1]{IkedaNaruse}.
These symmetric functions have another geometric interpretation
as the stable limits of $K$-theory representatives for certain orbit closures in the complete flag variety \cite{MP2019a,MP2019b}
as well as
remarkable positivity properties \cite{CTY,HKPWZZ,LM2022,PechenikYong} (see \cite[\S4.6]{Mar2022} for a survey of results and open problems).
They also have other formulas besides the tableau generating functions given above; see \cite{IkedaNaruse,Iwao,NakagawaNaruse}.
\end{remark}

Define a \defn{set-valued decomposition tableau} of strict partition shape $\lambda$
to be a set-valued shifted tableau whose distributions are each decomposition tableaux of shape $\lambda$.
Let $\SetDTab(\lambda)$ be the set of all such tableaux and let  $\SetDTab_n(\lambda)$ 
be the subset with all entries  at most $n$. A large amount of computation supports the following conjecture:

\begin{conjecture}[Cho--Ikeda \cite{IkedaMisc}] \label{Ik-conj}
It holds that $\GP_\lambda = \sum_{T \in \SetDTab(\lambda)} x^T$.
\end{conjecture}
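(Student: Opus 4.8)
The plan is to reduce the identity to a comparison of two tableau generating functions and then settle it with the seminormal crystal structures developed in this note. The excerpt records the Ikeda--Naruse definition $\GP_\lambda = \sum_{T \in \SetShTab(\lambda)} x^T$, so the conjecture is equivalent to
\[
\sum_{T \in \SetShTab(\lambda)} x^T = \sum_{T \in \SetDTab(\lambda)} x^T .
\]
I would first dispatch the lowest-degree part. Grading each side by the excess $|T| - |\lambda|$, where $|T|$ is the total number of letters, the degree-zero component of the left side is $P_\lambda$ by \eqref{PQ-eq}, while the degree-zero component of the right side is $\sum_{T \in \DTab(\lambda)} x^T = P_\lambda$ as recorded after Lemma~\ref{decomptab-characterisation}. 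Thus both series agree through lowest degree, and the whole problem is to match the higher-degree terms.

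For the main argument I would use crystals. Equip $\SetDTab_n(\lambda)$ with the seminormal $\q_n$-crystal of Theorem~\ref{sqqn-thm}, modeled on \cite{MPS,Yu} through the embedding $\revrow$ of Example~\ref{revrow-thm}. The cited character theory for finite seminormal $\q_n$-crystals expresses $\sum_{T} x^T$ as a $\ZZ$-linear combination of Schur $P$-functions, which regroup as in Corollary~\ref{GP-cor} into $\GP_\lambda$ together with a combination of $\GP_\mu$ for various $\mu$. The degree computation above forces the $\GP_\lambda$ coefficient to be $1$ and every remaining $\mu$ to satisfy $|\mu| > |\lambda|$, recovering exactly the partial result already stated. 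To finish, I would show these higher terms vanish. The cleanest sufficient condition lives in the square-root framework: if the square-root crystal on $\SetDTab_n(\lambda)$ is connected with a single highest weight element of weight $\lambda$ (Conjecture~\ref{sqqn-conj}), then, by the $\sqqn$-crystal character theory developed in Section~\ref{sqrt-crystal-tensor-sect} and in parallel with Yu's result that his connected square-root crystal on $\SetTab(\lambda)$ has character $G_\lambda$, the character must equal the single function $\GP_\lambda$. I would therefore try to prove connectivity by exhibiting, for each $T$, an explicit word in the square-root raising operators carrying $T$ to the canonical tableau whose reverse row reading word is the Yamanouchi word of shape $\lambda$.

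A crystal-free alternative is to construct a weight-preserving bijection $\SetShTab(\lambda) \to \SetDTab(\lambda)$ that $K$-theoretifies the degree-zero correspondence $\ShTab(\lambda) \leftrightarrow \DTab(\lambda)$ underlying the two formulas for $P_\lambda$. Concretely I would factor each side through an uncrowding map: uncrowd a set-valued tableau into an ordinary tableau paired with a column-flagged recording object, apply the known bijection on the ordinary shifted and decomposition tableaux, then recrowd. The work here is to check that uncrowding intertwines with the maximal-hook-subword constraints of Lemma~\ref{decomptab-characterisation}, so that the composite stays inside $\SetDTab(\lambda)$ and is reversible.

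The hard part will be the vanishing of the higher-degree terms, in either formulation. In the crystal approach this is precisely the open connectivity Conjecture~\ref{sqqn-conj}: the ``extra'' letters introduced by set-valued fillings can produce candidate highest weight elements that are not reachable from the canonical one, and excluding them requires a delicate global analysis of how the modified square-root operators interact with the hook-word and maximality conditions. In the bijective approach the same difficulty resurfaces as the problem of redistributing the several entries in a box so that the decomposition-tableau conditions survive insertion and deletion; it is this compatibility, not the degree-zero base case, where the real content lies.
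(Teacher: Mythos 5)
You are attempting to prove a statement that the paper itself leaves open: Conjecture~\ref{Ik-conj} is supported only by computation, and the paper's actual result in this direction is the strictly weaker Corollary~\ref{GP-cor}. Your proposal reproduces that partial result in outline, but the mechanism you cite for it is wrong. You propose to equip $\SetDTab_n(\lambda)$ with ``the seminormal $\q_n$-crystal of Theorem~\ref{sqqn-thm}'' and invoke the character theory of finite seminormal $\q_n$-crystals to obtain a Schur $P$-expansion. Theorem~\ref{sqqn-thm} constructs the $\sqqn$-crystal, not a $\q_n$-crystal; the relevant structure is Theorem~\ref{sv-thm}, and that crystal is seminormal only as a $\gl_n$-crystal. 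The paper explicitly remarks that $\SetDTab_n(\lambda)$ is typically \emph{not} $\q_n$-seminormal and is not expected to admit any seminormal $\q_n$-structure, precisely because such a structure would force $\sum_{T \in \SetDTab(\lambda)} x^T$ to be a $\ZZ$-linear combination of Schur $P$-functions, which is incompatible with it equaling $\GP_\lambda$. The paper instead derives Corollary~\ref{GP-cor} from symmetry (via the seminormal $\gl_n$-structure) together with the $K$-theoretic $Q$-cancelation property, verified using the multiset-valued operators $e^\ast_{\bar 1}$ and $f^\ast_{\bar 1}$, and then cites the Ikeda--Naruse characterization of the $\GP$-span.

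The genuine gap is your finishing step, and it does not close. You propose to eliminate the higher-degree $\GP_\mu$ terms by proving that the $\sqqn$-crystal $\SetDTab_n(\lambda)$ is connected with a unique highest weight element (Conjecture~\ref{sqqn-conj}) and then concluding that the character equals $\GP_\lambda$. First, Conjecture~\ref{sqqn-conj} is itself open, so this is a reduction, not a proof. Second, and more seriously, the implication ``connected with unique $\sqqn$-highest weight element of weight $\lambda$ $\Rightarrow$ character $=\GP_\lambda$'' is established nowhere: Section~\ref{sqrt-crystal-tensor-sect} explicitly warns that connected normal $\sqgln$- and $\sqqn$-crystals can have multiple highest weight elements, that $\SetDTab_n(\lambda)$ and Yu's $\SetTab_n(\lambda)$ need not even be normal in that sense, and that it is not known whether connected normal square-root crystals have characters given by a single $G$- or $\GP$-polynomial. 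Yu's theorem that his crystal on $\SetTab_n(\lambda)$ has character $G_\lambda$ is not a formal consequence of connectivity, so the ``parallel'' you invoke carries no proof content. Your bijective alternative (uncrowding) leaves the decisive compatibility with Lemma~\ref{decomptab-characterisation} unproven, as you yourself acknowledge. As it stands, the proposal re-derives, by a flawed route, what the paper already proves, and leaves Conjecture~\ref{Ik-conj} exactly as open as before.
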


Currently we do not know of any analogous conjectural formula
for $\GQ_\lambda$ involving a set-valued analogue of decomposition tableaux.
It would be interesting to find such a formula.

%

%

\subsection{A seminormal crystal}\label{a-sm-sect}

Crystals for $\gl_n$ have been identified on  
$\SetTab_n(\lambda)$ in \cite{MPS} and \cite{Yu} (see also \cite{HawkesScrimshaw,PT2022}). Both of these prior constructions  have analogues for set-valued decomposition tableaux, which can be used to derive a weaker form of Conjecture~\ref{Ik-conj}. The simpler of these crystals is described in this section.

Fix a strict partition $\lambda$ and  $T \in \SetDTab(\lambda)$.
 The \defn{reverse row reading word} of $T$ is the word $\revrow(T)$ formed by 
 iterating over the boxes of $T$ in the reverse row reading word order
 (starting with the last box of the first row and proceeding row by row, reading each row right to left), 
and listing the entries of each box in decreasing order.
Define $\weight(T) = \weight(\revrow(T))$.
For example, if 
\[
\ytableausetup{boxsize = .65cm,aligntableaux=center}
T =
\begin{ytableau}
\none & 1 & 3 
\\
4 & 123 & 234  
\end{ytableau}
\]
then
$\revrow(T) = 432  321  4  3  1$
and
$\weight(T) = (2,2,3,2).$

Fix $i \in \PP$.
Consider the word formed by replacing each $i$ in $\revrow(T)$ by a ``)"  and each $i+1$ 
by a ``(". We define a letter in $\revrow(T)$ to be \defn{$i$-unpaired}
if it is equal to $i$ or $i+1$ but does not belong to 
a matching pair of parentheses in this modified word.

\begin{definition}
Given $i \in \PP$ and $T \in \SetDTab(\lambda)$,
form $e_i(T) \in \SetDTab(\lambda)\sqcup \{0\}$  as follows:
\begin{itemize}
\item Define $e_i(T) = 0$ if there are no $i$-unpaired letters in $\revrow(T)$ equal to $i+1$.
\item Otherwise, suppose the first $i$-unpaired $i+1$ in $\revrow(T)$ is in box $(x,y)$.

\ben 
\item[(a)] Form $e_i(T) $ from $T$ by  changing the $i+1$ in box $(x,y)$ to $i$ if this  yields a set-valued decomposition tableau.
For example,
\[
e_2 : \begin{ytableau}
\none & 1 & 2
\\
3 & {\textcolor{red}{13}}& 123  
\end{ytableau} 
\mapsto
\begin{ytableau}
\none & 1 & 2
\\
3 & 12 & 123  
\end{ytableau}.
\]

\item[(b)] Otherwise, some box $(a,b)$ preceding $(x,y)$ in the reverse row reading word order
has  $\{i,i+1\}\subseteq T_{ab}$. If $(a,b)$ is the last such box, then 
form $e_i(T)$  by removing $i+1$ from $T_{ab}$ and adding $i$ to $T_{xy}$.
The box $(a,b)$ either has $a=x$ and $b>y$, as in the example
\[
e_3 : \begin{ytableau}
\none & 1 & 2 
\\
{\textcolor{red}4} & 1 & {\textcolor{blue}{34}  }
\end{ytableau} 
\mapsto
\begin{ytableau}
\none & 1 & 2 
\\
34  & 1  &3   
\end{ytableau},
\]
or has $a=x-1$ and $b<y$, as in the example
\[
e_2 : \begin{ytableau}
\none & 1 & {\textcolor{red}3} 
\\
4 & {\textcolor{blue}{123}} & 234  
\end{ytableau} 
\mapsto
\begin{ytableau}
\none & 1 & 23 
\\
4 & 12 & 234  
\end{ytableau}.
\]
\een
\end{itemize}
\end{definition}

\begin{definition}
Given $i \in \PP$ and $T \in \SetDTab(\lambda)$,
form  $f_i(T) \in \SetDTab(\lambda)\sqcup \{0\}$  as follows:
\begin{itemize}
\item Define $f_i(T) = 0$ if there are no $i$-unpaired letters  in $\revrow(T)$ equal to $i$.
\item Otherwise, suppose the last $i$-unpaired $i$ in $\revrow(T)$ is in box $(x,y)$ of $T$.

\ben 
\item[(a)] Form $f_i(T) $ from $T$ by  changing the $i$ in box $(x,y)$ to $i+1$ if this  yields a set-valued decomposition tableau.
For example,
\[
f_2 : 
\begin{ytableau}
\none & 1 & 2
\\
3 & {\textcolor{red}{12}} & 123  
\end{ytableau}
\mapsto
\begin{ytableau}
\none & 1 & 2
\\
3 & 13& 123  
\end{ytableau} .\]

\item[(b)] Otherwise, some box $(a,b)$ following $(x,y)$ in the reverse row reading word order
has  $\{i,i+1\}\subseteq T_{ab}$. If $(a,b)$ is the first such box, then 
form $f_i(T)$  by removing $i$ from $T_{ab}$ and adding $i+1$ to $T_{xy}$.
The box $(a,b)$ either has $a=x$ and $b<y$, as in the example
\[
f_3 :
\begin{ytableau}
\none & 1 & 2 
\\
{\textcolor{blue}{34}}  & 1  &{\textcolor{red}3}   
\end{ytableau}
\mapsto
 \begin{ytableau}
\none & 1 & 2 
\\
4& 1 & 34
\end{ytableau},
\]
or has $a=x+1$ and $b>y$, as in the example
\[
f_2 :
\begin{ytableau}
\none & 1 & {\textcolor{blue}{23}} 
\\
4 & {\textcolor{red}{12}} & 234  
\end{ytableau} 
\mapsto
\begin{ytableau}
\none & 1 &3
\\
4 & 123 & 234  
\end{ytableau} .
\]
\een
\end{itemize}
\end{definition}

A \defn{multiset-valued decomposition tableau}
 is defined in the same way as a set-valued decomposition tableau, only
 we allow entries to be multisets. 
 Let $\SetDTab^{\ast}(\lambda)$ be the set of multiset-valued decomposition tableaux
of shape $\lambda$ with the property that only the number $1$ appears more than once 
in any given box.

\begin{definition}
Form $e^\ast_{\bar 1}(T)$ and $f^\ast_{\bar 1}(T)$ 
from $T \in \SetDTab^\ast(\lambda)$ as follows:
\ben
\item[(a)] If $2 \notin \revrow(T)$ or the first $2\in\revrow(T)$ is after a $1$, then $e^\ast_{\bar 1}(T)=0$.
\newline
Otherwise, change 
the first $2$ in $\revrow(T)$  to $1$. For example,
\[
e^\ast_{\bar 1} : 
\ytabc{.7cm}{
\none & 11 & 2
\\
3 & 13 & {\textcolor{red}{123}}
}
\mapsto
\ytabc{.7cm}{
\none & 11 & 2
\\
3 & 13 & 113  
}
\quand
e^\ast_{\bar 1}: \ytabc{.7cm}{
\none & 11 & 2
\\
3 & 13 & 113  
} \mapsto 0.
\]

\item[(b)] If $1\notin \revrow(T)$ or the first $1\in\revrow(T)$ is after a $2$, then $f^\ast_{\bar 1}(T)=0$.
\newline
Otherwise, change 
the first $1$ in $\revrow(T)$ to $2$.
For example, \[
f^\ast_{\bar 1} : 
\ytabc{.7cm}{
\none & 111 & 2
\\
3 & 13 &  {\textcolor{red}{13}}
}
\mapsto
\ytabc{.7cm}{
\none & 111 & 2
\\
3 & 13 &23
}
\quand
f^\ast_{\bar 1}: 
\ytabc{.7cm}{
\none & 1 & 2
\\
3 & 13 &123
}
 \mapsto 0.
\]
\een
\end{definition}

\begin{proposition}
The operations $e^\ast_{\bar 1}$ and $f^\ast_{\bar 1}$ define maps 
$\SetDTab^\ast(\lambda) \to \SetDTab^\ast(\lambda) \sqcup \{0\},$
and 
if $T,U \in \SetDTab^\ast(\lambda)$ then $e^\ast_{\bar 1}(T) = U$ if and only if $T = f^\ast_{\bar 1}(U)$.
\end{proposition}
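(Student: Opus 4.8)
The plan is to reduce everything to the single-valued situation of Example~\ref{revrow-thm}, where the operators $e_{\bar 1}$ and $f_{\bar 1}$ are already known to preserve $\DTab_n(\lambda)$, by analyzing the result one distribution at a time. First I would record a structural observation about the box that gets modified. Suppose $e^\ast_{\bar 1}(T)\neq 0$ and let $(x,y)$ be the box of $T$ containing the first $2$ in $\revrow(T)$. Because this $2$ is the first $2$ and, by the non-vanishing hypothesis, is not preceded by any $1$, every box occurring before $(x,y)$ in the reverse row reading order contains only entries $\geq 3$, and inside $(x,y)$ itself the entries listed before the $2$ are all $\geq 3$. Since only the value $1$ may be repeated in $\SetDTab^\ast(\lambda)$, box $(x,y)$ of $T$ contains exactly one $2$, possibly several $1$'s, and some entries $\geq 3$.

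For well-definedness of $e^\ast_{\bar 1}$, I would show that every distribution of $U := e^\ast_{\bar 1}(T)$ is a decomposition tableau. Writing $B$ for the set of entries $\geq 3$ in box $(x,y)$, each distribution $D'$ of $U$ selects in box $(x,y)$ either an element of $B$ or a $1$. If it selects an element of $B$, then $D'$ is literally a distribution of $T$ and hence a decomposition tableau. If it selects a $1$, let $D$ be the distribution of $T$ agreeing with $D'$ everywhere except that it selects the unique $2$ in box $(x,y)$. By the structural observation, all entries of $\revrow(D)$ preceding box $(x,y)$ are $\geq 3$, so the first $2$ of $\revrow(D)$ sits in box $(x,y)$ and is not preceded by a $1$; therefore the single-valued operator of Example~\ref{revrow-thm} satisfies $e_{\bar 1}(D) = D'$ (taking $n$ larger than every entry of $T$), and $D'$ is again a decomposition tableau. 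Since changing a $2$ to a $1$ only increases the multiplicity of $1$, the result still lies in $\SetDTab^\ast(\lambda)$. The argument for $f^\ast_{\bar 1}$ is symmetric; the key extra point is that when $f^\ast_{\bar 1}(T)\neq 0$ the modified box contains no $2$ at all, since any $2$ in it would be listed before its $1$'s and hence would precede the first $1$ of $\revrow(T)$, so replacing a $1$ by a $2$ creates no repeated entry other than possibly $1$.

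For the inverse relationship, I would prove the two round-trip identities $f^\ast_{\bar 1}(e^\ast_{\bar 1}(T)) = T$ and $e^\ast_{\bar 1}(f^\ast_{\bar 1}(U)) = U$ whenever the inner operator is nonzero; together these yield the claimed equivalence, since $e^\ast_{\bar 1}(T)=U$ forces $f^\ast_{\bar 1}(U)=T$ and conversely. With $U = e^\ast_{\bar 1}(T)$ as above, box $(x,y)$ of $U$ now contains no $2$ and one extra $1$, while every earlier box still has all entries $\geq 3$. Hence the first $1$ of $\revrow(U)$ lies in box $(x,y)$ and is preceded by no $2$, so $f^\ast_{\bar 1}(U)$ replaces one of those $1$'s in box $(x,y)$ by a $2$; as the $1$'s are indistinguishable multiset elements, this recovers box $(x,y)$ of $T$ exactly, giving $f^\ast_{\bar 1}(U)=T$. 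The second identity is entirely analogous.

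The main obstacle I expect is the bookkeeping in the well-definedness step: one must verify that the reading-word position of the modified letter really does place it at the head of the word among the $1$'s and $2$'s, so that the single-valued operators $e_{\bar 1}$ and $f_{\bar 1}$ act on the constructed distribution $D$ exactly as the set-valued operators prescribe. Lemma~\ref{decomptab-characterisation} can instead be invoked to check the decomposition-tableau conditions directly on the two modified rows, but routing the verification through the established single-valued crystal of Example~\ref{revrow-thm} avoids a case analysis of the forbidden configurations.
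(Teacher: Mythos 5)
Your proposal is correct and follows essentially the same route as the paper's proof: both verify well-definedness by passing to distributions and invoking the single-valued operators $e_{\bar 1}$, $f_{\bar 1}$ of Example~\ref{revrow-thm} on the distribution that selects the modified letter, and both treat the inverse relationship as a direct consequence of the definitions. Your version simply spells out details the paper leaves implicit (the structural observation that earlier boxes contain only entries $\geq 3$, the multiset condition, and the round-trip identities), which is fine but not a different argument.
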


\begin{proof}
The claim that $e^\ast_{\bar 1}(T) = U$ if and only if $T = f^\ast_{\bar 1}(U)$ is clear from the definitions.
It remains 
to check the less evident property that $e^\ast_{\bar 1}(T) $ and $f^\ast_{\bar 1}(T) $
both belong to $\SetDTab^\ast(\lambda) \sqcup \{0\}$.

For this, 
choose $T \in \SetDTab^\ast(\lambda)$ with $e^\ast_{\bar 1}(T) \neq 0$.
Consider the box of $T$ containing the first $2$ in the reverse row reading word order.
On every distribution of $T$
containing the $2$  in this box, the crystal operator $e_{\bar 1}$ from Example~\ref{revrow-thm}
acts by changing this $2$ to $1$. It follows that $e^\ast_{\bar 1}(T) \in \SetDTab^\ast(\lambda)$.
Likewise, if $f^\ast_{\bar 1}(T) \neq 0$ and we consider the box of $T$ containing the first $1$ in the reverse row reading word order,
then $f_{\bar 1}$ acts on every distribution of $T$ containing the $1$ in this box by changing this $1$ to $2$, so 
 $f^\ast_{\bar 1}(T) \in \SetDTab^\ast(\lambda)$.
\end{proof}

\begin{definition}
Define $f_{\bar 1}:\SetDTab(\lambda) \to \SetDTab(\lambda)\sqcup\{0\}$
to be the restriction of $f_{\bar 1}$. Then
let $e_{\bar 1}$ be the map $\SetDTab(\lambda) \to \SetDTab(\lambda)\sqcup\{0\}$
with $e_{\bar 1}(T) = e_{\bar 1}^\ast(T)$ if this is not a multiset-valued tableau, and with $e_{\bar 1}(T) =0$ otherwise.
\end{definition}

Equivalently, $e_{\bar 1}$
acts as zero on set-valued decomposition tableaux
for which the first box in the reverse reading word order containing $1$ or $2$ contains both $1$ and $2$
(notice that $f_{\bar 1}$ also has this property); on all other $T\in\SetDTab(\lambda)$ we have 
$e_{\bar 1}(T)=e_{\bar1}^\ast(T)$.

\begin{theorem}\label{sv-thm}
Let $\lambda$ be a strict partition with at most $n$ parts, so that $\SetDTab_n(\lambda)$ is nonempty.
Then for the operators $e_{\bar 1}, e_1,e_2,\dots,e_{n-1}$ and $f_{\bar 1},f_1,f_2,\dots,f_{n-1}$ given above, 
 $\SetDTab_n(\lambda)$ is a $\q_n$-crystal
that is seminormal as a $\gl_n$-crystal.
\end{theorem}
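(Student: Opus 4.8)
The plan is to verify the $\gl_n$-crystal axioms together with $\gl_n$-seminormality for $e_1,\dots,e_{n-1},f_1,\dots,f_{n-1}$ first, and then to treat the queer operators $e_{\bar 1},f_{\bar 1}$ separately. The organizing principle is that all of these operators act through the signature rule applied to the reverse reading word $\revrow(T)$, exactly as the word operators do on $\BB_n^{\otimes N}$ where $N=|\revrow(T)|$. The one feature that must be tracked is that a box containing both $i$ and $i+1$ contributes the adjacent pair $(i+1)\,i$ to $\revrow(T)$, since entries are listed in decreasing order; under the $i$-parenthesization this is a matched pair ``$()$'' and is therefore neutral for the signature rule. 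Such boxes are precisely the reservoirs used in case (b) of the definitions, where an entry migrates between boxes rather than changing in place.

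First I would record the reading-word compatibility: for each $i\in[n-1]$, if $e_i(T)\neq 0$ then passing from $T$ to $e_i(T)$ changes the $\{i,i+1\}$-subword of $\revrow(T)$ by deleting one $i+1$ (turning the reservoir box from ``$()$'' into ``$)$'') and inserting one $i$ (turning the target box from ``$($'' into ``$()$''); in case (a) these two positions coincide and we simply recolor $i+1\mapsto i$. In either case the number of $i$-unpaired letters equal to $i+1$ drops by exactly one and the number equal to $i$ rises by one, matching the action of the word operator $e_i$; the selection rules (``first unpaired $i+1$'' and ``last preceding reservoir box'') are exactly what make the global parenthesis matching behave as in $\BB_n^{\otimes N}$. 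Granting this, the weight identity $\weight(e_i(T))=\weight(T)+\e_i-\e_{i+1}$ is immediate, and $\gl_n$-seminormality follows at once: $\varepsilon_i(T)$ and $\varphi_i(T)$ equal the (finite) numbers of $i$-unpaired letters equal to $i+1$ and to $i$ respectively, and since each matched pair contributes one of each, $\varphi_i(T)-\varepsilon_i(T)$ equals the total number of $i$'s minus the total number of $(i+1)$'s, i.e.\ $\weight(T)_i-\weight(T)_{i+1}$.

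The crystal reflection axiom $e_i(T)=U\iff f_i(U)=T$ then reduces to matching the two cases of the definitions against each other: $e_i$ via case (a) is inverted by $f_i$ via case (a) (the in-place recoloring is reversible, and $f_i$ selects case (a) because reversing it is valid), while $e_i$ via case (b) is inverted by $f_i$ via case (b), where the roles of the reservoir box $(a,b)$ and the target box $(x,y)$ are exchanged and the ``last preceding'' versus ``first following'' selection rules correspond under reversal of the reading order. The genuinely new point, and the step I expect to be the main obstacle, is the closure claim that $e_i(T),f_i(T)\in\SetDTab_n(\lambda)\sqcup\{0\}$: in case (a) this holds by the hypothesis of the case, but in case (b) one must check that removing $i+1$ from $T_{ab}$ and inserting $i$ into $T_{xy}$ (and symmetrically for $f_i$) keeps every distribution a semistandard decomposition tableau. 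This is where the decomposition condition, absent in the unshifted setting of \cite{MPS}, genuinely enters, and I would verify it by applying the forbidden-configuration characterization of Lemma~\ref{decomptab-characterisation} to the two geometric subcases ($a=x$ with $b>y$, and $a=x-1$ with $b<y$); the positions of $(a,b)$ and $(x,y)$ singled out by the construction are precisely those for which none of the forbidden patterns (a) or (b) of that lemma can be created.

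Finally, for the queer operators I would invoke the Proposition already proved for $\SetDTab^\ast(\lambda)$: restricting $f^\ast_{\bar 1}$ and modifying $e^\ast_{\bar 1}$ to vanish when a multiset would result gives well-defined maps on $\SetDTab_n(\lambda)$, and the equivalence $e_{\bar 1}(T)=U\iff f_{\bar 1}(U)=T$ transfers directly since $U\in\SetDTab_n(\lambda)$ forces $e^\ast_{\bar 1}(T)$ to avoid repeated entries. The weight change $\weight(c)=\weight(b)+\e_1-\e_2$ is clear because $e_{\bar 1}$ recolors a single $2$ into a $1$. It remains to check axiom (a) of Definition~\ref{q-crystal-def}: for $3\le i\le n-1$ the operators $e_{\bar 1},f_{\bar 1}$ alter only the $1,2$-content of a single box in place, while $e_i,f_i$ alter and relocate only letters in $\{i,i+1\}\subseteq\{3,4,\dots\}$; hence the two families act on disjoint letters and disjoint box data, neither disturbs the other's trigger, and so they commute and each preserves the other's string lengths $\varepsilon_i,\varphi_i$. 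Note that we neither claim nor need seminormality as a $\q_n$-crystal, only as a $\gl_n$-crystal, so no constraint on $\varphi_{\bar 1}+\varepsilon_{\bar 1}$ has to be established.
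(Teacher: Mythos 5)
Your outline follows the same route as the paper: signature-rule compatibility of the operators with the reverse reading word, $\gl_n$-seminormality by counting $i$-unpaired letters, the reflection axiom by matching cases (a)/(b) of $e_i$ against those of $f_i$, and the $\bar 1$-operators handled through the result already proved for $\SetDTab^\ast(\lambda)$. The problem is that everything you defer to ``applying the forbidden-configuration characterization of Lemma~\ref{decomptab-characterisation}'' is not a routine check but is the actual content of the proof, and you never supply it. Concretely, three claims are asserted without argument: (i) when changing the first $i$-unpaired $i+1$ in box $(x,y)$ in place fails, there \emph{exists} a box $(a,b)$ preceding it in reverse reading order with $\{i,i+1\}\subseteq T_{ab}$, lying in one of the specific geometric positions ($a=x$ with $b>y$, or $a=x-1$ with $b<y$) --- this is needed before case (b) of the operator is even well defined; (ii) no box strictly between $(a,b)$ and $(x,y)$ contains $i$ or $i+1$ --- without this, your assertion that the pairing behaves exactly as in $\BB_n^{\otimes N}$ (in particular, that the leftover $i$ in $T_{ab}$ becomes the \emph{last} $i$-unpaired $i$, so that $f_i$ undoes $e_i$) does not follow; and (iii) the transfer move itself lands in $\SetDTab_n(\lambda)$. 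These are precisely parts (2), (3), (4) of Lemmas~\ref{e-op-SetDTab} and \ref{f-op-SetDTab}, whose proofs occupy most of Section~\ref{sv-proof-sect} and require a device your sketch never mentions: the extremal distributions $\mathrm{distr}_{i,\max}(T)$ and $\mathrm{distr}_{i,\min}(T)$, which transfer questions about the set-valued tableau to honest decomposition tableaux on which the classical crystal operators are known to act, followed by an extensive forbidden-pattern case analysis.

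The same gap propagates to your verification of the reflection axiom: to show $f_i(e_i(T))=T$ when $e_i$ acts by case (b), you must know that changing the leftover $i$ in box $(a,b)$ of $e_i(T)$ to $i+1$ in place does \emph{not} yield a set-valued decomposition tableau (otherwise $f_i$ would act by case (a) and would not return $T$); the paper obtains this by inspecting the three configurations in part (2) of Lemma~\ref{e-op-SetDTab} and feeding the outcome into Lemma~\ref{f-op-SetDTab}, and nothing in your proposal substitutes for that inspection. By contrast, your treatment of the $\bar 1$-operators, the weight identities, and the string-length computation matches the paper, and your disjointness argument for axiom (a) of Definition~\ref{q-crystal-def} (commutation with $e_i,f_i$ for $3\le i\le n-1$) is actually slightly more explicit than what the paper records. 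But as it stands, the proposal is a correct plan whose central technical lemmas are missing, not a proof.
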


We defer the proof to Section~\ref{sv-proof-sect} and discuss some applications here.

\begin{remark}
The crystal $\SetDTab_n(\lambda)$ is not typically $\q_n$-seminormal as it can occur that
  $\varphi_{\bar 1}(T) + \varepsilon_{\bar 1}(T)=0$
outside the case when $\weight(T)_1 = \weight(T)_2 =0$.
But we would not expect this set to have any seminormal $\q_n$-crystal structure, 
as this would imply that its character $\sum_{T \in \SetDTab(\lambda)} x^T$ is a formal linear combination of Schur $P$-functions,
which is not true of $\GP_\lambda$.

More surprisingly, $\SetDTab_n(\lambda)$ is not always a normal $\gl_n$-crystal; see Figure~\ref{non-stembridge-fig}.
Thus we cannot deduce that 
 $\sum_{T \in \SetDTab(\lambda)}  x^T$ is Schur positive,
  although this is expected in view of Conjecture~\ref{Ik-conj} as  $\GP_\lambda$ is Schur positive \cite[Thm.~3.27]{MarScr}. 
Even when the $\q_n$-crystal $\SetDTab_n(\lambda)$ is normal, it is often quite disconnected.
See Figures~\ref{setdtab-fig1} and \ref{setdtab-fig2} for examples.
\end{remark}

\begin{figure}
\centerline{\input{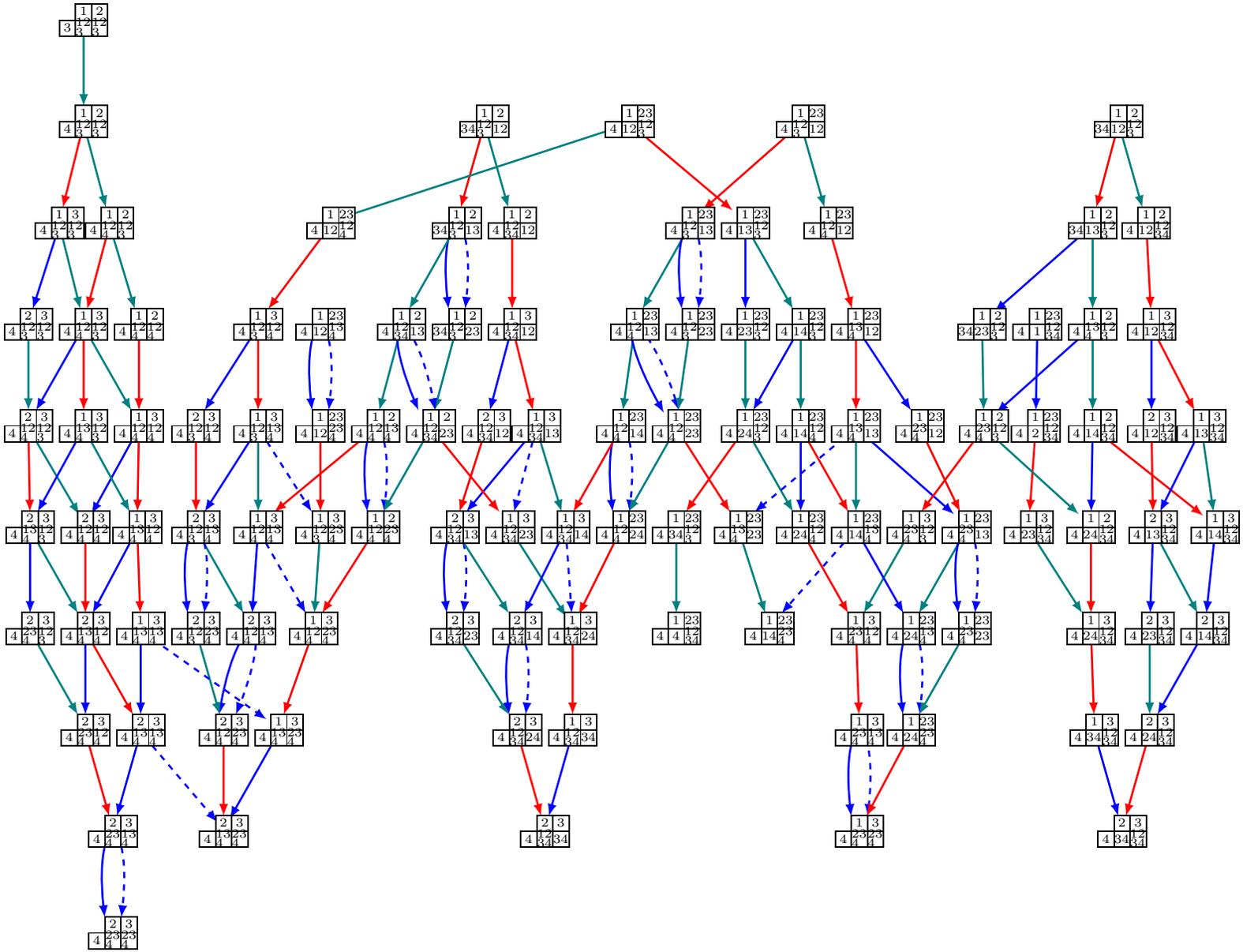}}
\caption{A connected component of the $\q_4$-crystal of set-valued decomposition tableaux 
$\SetDTab_4(\lambda)$ for $\lambda=(3,2)$ that is not normal as a $\gl_n$-crystal.
Solid blue, red, and green arrows respectively indicate 
$\xrightarrow{\ 1\ } $, $ \xrightarrow{\ 2\ } $, and $ \xrightarrow{\ 3\ } $ edges, while dotted blue arrows
indicate $\xrightarrow{\ \bar 1\ }$ edges.
}
\label{non-stembridge-fig}
\end{figure}

\begin{figure}
\centerline{\input{q-sv-dec2.tex}}
\caption{Crystal graph of the $\q_3$-crystal $\SetDTab_3(\lambda)$ for $\lambda=(2)$.
Here,
solid blue and red arrows respectively indicate 
$ \xrightarrow{\ 1\ } $ and $ \xrightarrow{\ 2\ } $ edges while dashed blue arrows 
indicate $ \xrightarrow{\ \bar 1\ }$ edges.
}
\label{setdtab-fig1}
\end{figure}

\begin{figure}
\centerline{\input{q-sv-dec21.tex}}
\caption{Crystal graph of the $\q_3$-crystal $\SetDTab_3(\lambda)$ for $\lambda=(2,1)$.
Here,
solid blue and red arrows respectively indicate 
$ \xrightarrow{\ 1\ } $ and $ \xrightarrow{\ 2\ } $ edges while dashed blue arrows 
indicate $ \xrightarrow{\ \bar 1\ }$ edges.
}
\label{setdtab-fig2}
\end{figure}

We can at least use Theorem~\ref{sv-thm} to show that $\sum_{T \in \SetDTab(\lambda)} x^T$ is symmetric, and a little more.
A polynomial or formal power series $f$ in an ordered sequence of variables $\{x_i\}$ 
satisfies the
 \defn{$K$-theoretic $Q$-cancelation property} 
 if for all variable indices $i< j$ the substitution
\[f(x_1,\dots,x_{i-1}, t, x_{i+1},\dots,x_{j-1}, \tfrac{-t}{1+t}, x_{j+1},\dots)\]
 does not depend on $t$.
When $f$ is symmetric, to check this property it suffices to take $i=1$ and $j=2$.

The symmetric functions in $ \ZZ\llbracket x_1,x_2,\dots\rrbracket$
with the $K$-theoretic $Q$-cancelation property are the ones 
that may be (uniquely) expressed as a possibly infinite $\ZZ$-linear combinations
of
 $\GP$-functions \cite[Prop.~3.4]{IkedaNaruse}.
In finitely many variables, 
the polynomials $\GP_\lambda(x_1,x_2,\dots,x_n)$, with $\lambda$ ranging over all strict partitions having at most $n$ parts, form a $\ZZ$-basis for the subring of symmetric elements in $\ZZ[x_1,x_2,\dots,x_n]$
with the $K$-theoretic $Q$-cancelation property \cite[Thm.~3.1]{IkedaNaruse}.
 
\begin{proposition}\label{kca-prop}
The power series $\sum_{T \in \SetDTab(\lambda)} x^T$ is a symmetric function with the $K$-theoretic $Q$-cancelation property
and lowest degree term $P_\lambda$.

\end{proposition}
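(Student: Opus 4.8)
Write $f:=\sum_{T\in\SetDTab(\lambda)}x^T$. I would establish the three assertions in turn, treating the cancelation property last.

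The lowest-degree claim is immediate. Every $T\in\SetDTab(\lambda)$ has at least $|\lambda|$ entries, with equality exactly when every box is a singleton, i.e.\ when $T$ is an ordinary decomposition tableau; and $\sum_{T\in\DTab(\lambda)}x^T=P_\lambda$ was recorded in Section~\ref{prelim-sect}. Hence the homogeneous degree-$|\lambda|$ part of $f$ is $P_\lambda$ and all other terms have strictly larger degree. For symmetry I would invoke Theorem~\ref{sv-thm}: for each $n$ the finite set $\SetDTab_n(\lambda)$ is a $\gl_n$-crystal that is seminormal as such, so its character $\ch(\SetDTab_n(\lambda))=f(x_1,\dots,x_n,0,0,\dots)$ is symmetric in $x_1,\dots,x_n$. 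Since the coefficient of any fixed monomial stabilizes as $n\to\infty$, the full series $f$ is invariant under every adjacent transposition $s_i=(i,i{+}1)$, hence symmetric.

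The real work is the $K$-theoretic $Q$-cancelation, and since $f$ is now known to be symmetric it suffices to treat $i=1,\ j=2$. Here I would pass to the multiset generating function $F:=\sum_{T\in\SetDTab^{\ast}(\lambda)}x^T$. The key bookkeeping identity is $F=f|_{x_1\mapsto x_1/(1-x_1)}$. Indeed, a multiset-valued decomposition tableau is the same datum as a set-valued one $U\in\SetDTab(\lambda)$ together with a choice of multiplicity $m_B\ge 1$ of the entry $1$ in each box $B$ of $U$ containing $1$ (extra copies of $1$ never change the distributions, so validity is unaffected). The number of such boxes is exactly $\weight(U)_1$, and summing $\sum_{m\ge 1}x_1^{m}=x_1/(1-x_1)$ over them replaces $x_1^{\weight(U)_1}$ by $(x_1/(1-x_1))^{\weight(U)_1}$ while leaving every other variable untouched, proving the identity. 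Now set $s=t/(1+t)$, so that $s/(1-s)=t$ and $-s=-t/(1+t)$; then $F(s,-s,x_3,\dots)=f(s/(1-s),-s,x_3,\dots)=f(t,\tfrac{-t}{1+t},x_3,\dots)$. Thus the $K$-theoretic $Q$-cancelation of $f$ is \emph{equivalent} to the ordinary statement that $F(s,-s,x_3,\dots)$ is independent of $s$.

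To prove this last statement I would exhibit a sign-reversing, fixed-point-free involution $\iota$ on $\{T\in\SetDTab^{\ast}(\lambda):\weight(T)_1+\weight(T)_2\ge 1\}$ built from $e^\ast_{\bar 1},f^\ast_{\bar 1}$: set $\iota(T)=f^\ast_{\bar 1}(T)$ if $f^\ast_{\bar 1}(T)\neq 0$, and $\iota(T)=e^\ast_{\bar 1}(T)$ otherwise. One checks that when $\weight(T)_1+\weight(T)_2\ge 1$ at least one of $e^\ast_{\bar 1}(T),f^\ast_{\bar 1}(T)$ is nonzero; that applying $f^\ast_{\bar 1}$ twice always gives $0$, since after changing the first $1$ in $\revrow(T)$ to a $2$ every remaining $1$ lies to the right of that new $2$; and that $e^\ast_{\bar 1},f^\ast_{\bar 1}$ are mutually inverse where nonzero (the Proposition preceding Theorem~\ref{sv-thm}). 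Together these make $\iota$ an involution with no fixed points that swaps the two defining cases. Each application changes $\weight(T)_2$ by $\pm 1$ while preserving $\weight(T)_1+\weight(T)_2$ and all of $\weight(T)_3,\weight(T)_4,\dots$. Consequently, in $F(s,-s,x_3,\dots)=\sum_T(-1)^{\weight(T)_2}s^{\weight(T)_1+\weight(T)_2}\prod_{k\ge 3}x_k^{\weight(T)_k}$, the terms with $\weight(T)_1+\weight(T)_2\ge 1$ cancel in pairs $\{T,\iota(T)\}$, leaving only $F(0,0,x_3,\dots)$, which does not involve $s$. Combined with the equivalence above, this yields the $K$-theoretic $Q$-cancelation of $f$, and the cited characterization \cite[Prop.~3.4]{IkedaNaruse} then expresses $f$ as a possibly infinite $\ZZ$-linear combination of $\GP$-functions.

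The main obstacle is the involution step, specifically verifying that $e^\ast_{\bar 1},f^\ast_{\bar 1}$ produce $\bar 1$-strings of length at most one on $\SetDTab^{\ast}(\lambda)$. This is precisely the place where the restriction that only the entry $1$ may repeat is essential: it is what makes $F=f|_{x_1\mapsto x_1/(1-x_1)}$ a geometric deformation in the single variable $x_1$ and what repairs the $\bar 1$-structure that fails to be seminormal on $\SetDTab(\lambda)$ itself. Once these facts are in hand, the bookkeeping identity and its compatibility with the formal-group substitution are routine.
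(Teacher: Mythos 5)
Your proposal is correct and follows essentially the same route as the paper: symmetry from the seminormal $\gl_n$-crystal structure of Theorem~\ref{sv-thm}, the lowest-degree term read off from ordinary decomposition tableaux, and the cancelation property proved by passing to $\SetDTab^\ast(\lambda)$ via the substitution $x_1 \mapsto x_1/(1-x_1)$, cancelling terms in pairs with the $e^\ast_{\bar 1}, f^\ast_{\bar 1}$ involution, and then setting $u = t/(1+t)$. The only differences are presentational (you factor the two-variable substitution through a one-variable identity and spell out the involution explicitly, where the paper writes the signed sum and pairwise cancellation directly), not mathematical.
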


\begin{proof}
Let  $\wGP_\lambda  :=  \sum_{T \in \SetDTab(\lambda)} x^T$.
This power series is symmetric since 
each polynomial $\wGP_\lambda(x_1,x_2,\dots,x_n)$ is the character of a seminormal $\gl_n$-crystal, namely
 $\SetDTab_n(\lambda)$. The lowest degree term of $\wGP_\lambda$ is 
 evidently $P_\lambda=  \sum_{T \in \DTab(\lambda)} x^T$.

To check the $K$-theoretic $Q$-cancelation property,
observe that
\[
\wGP_\lambda(\tfrac{x_1}{1-x_1}, -x_2,x_3,x_4,\dots) =  \sum_{T \in \SetDTab^\ast(\lambda)} (-1)^{c_2(T)} x^T
\]
where $c_i(T)$ denotes the number of times that $i$ appears in $T$.
We have $e^\ast_{\bar 1}(T) = f^\ast_{\bar 1}(T) = 0$ if and only if $c_1(T) = c_2(T) = 0$. 
If $e^\ast_{\bar 1}(T) \neq 0$ then 
$c_2(T) = c_2(e^\ast_{\bar 1}(T)) + 1$ and $ x^T =  x^{e^\ast_{\bar 1}(T)}  x_2 / x_1,$
in which case 
$
\((-1)^{c_2(T)} x^T + (-1)^{c_2(e^\ast_{\bar 1}(T))} x^{e^\ast_{\bar 1}(T)}\)\big|_{x_1=x_2} = 0.
$
Therefore 
\[
\wGP_\lambda(\tfrac{u}{1-u}, -u,x_3,x_4,\dots) =  \sum_{\substack{T \in \SetDTab^\ast(\lambda)\\ c_1(T) = c_2(T) = 0}}   x^T
= \wGP_\lambda( x_3,x_4,\dots).\]
Setting $u = \frac{t}{1+t}$ turns this into the $K$-theoretic $Q$-cancelation property.
\end{proof}

Since every $\GP_\lambda$ has lowest degree term $P_\lambda$, we
get this weaker form of Conjecture~\ref{Ik-conj}:

\begin{corollary}\label{GP-cor}
If  $\lambda $  is a strict partition 
then $\sum_{T \in \SetDTab(\lambda)} x^T$ is equal to $\GP_\lambda$ plus a possibly infinite $\ZZ$-linear combination
of power series $\GP_\mu$ with $|\mu|  > |\lambda|$.
\end{corollary}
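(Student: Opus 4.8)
The plan is to combine the preceding proposition with the characterization of the $K$-theoretic $Q$-cancelation property quoted from \cite[Prop.~3.4]{IkedaNaruse}, and then pin down the leading coefficients by a degree argument. First I would invoke the preceding proposition: the power series $\wGP_\lambda := \sum_{T \in \SetDTab(\lambda)} x^T$ is symmetric and has the $K$-theoretic $Q$-cancelation property, so by the cited result it admits a unique expansion $\wGP_\lambda = \sum_\mu c_\mu \GP_\mu$ as a (possibly infinite) $\ZZ$-linear combination of $\GP$-functions, with $\mu$ ranging over strict partitions. This infinite sum is well-defined because each $\GP_\mu$ has all terms of degree at least $|\mu|$ and there are only finitely many strict partitions of any fixed size; hence for every $d$ the degree-$d$ homogeneous component of the right-hand side is a \emph{finite} $\ZZ$-linear combination of the degree-$d$ pieces of those $\GP_\mu$ with $|\mu| \le d$.

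Next I would pass to homogeneous components. Writing $\GP_\mu = P_\mu + (\text{terms of degree} > |\mu|)$, the lowest-degree contribution of each $\GP_\mu$ is $P_\mu$, which is homogeneous of degree $|\mu|$. Let $d$ be the smallest integer for which some strict $\mu$ of size $d$ has $c_\mu \neq 0$. Then the degree-$d$ component of $\wGP_\lambda$ equals $\sum_{|\mu| = d} c_\mu P_\mu$, since no $\GP_\mu$ with $|\mu| > d$ reaches degree $d$. On the other hand, the preceding proposition identifies the lowest-degree term of $\wGP_\lambda$ as $P_\lambda$, so the smallest degree at which $\wGP_\lambda$ is nonzero is $|\lambda|$; this forces $d = |\lambda|$ and $\sum_{|\mu| = |\lambda|} c_\mu P_\mu = P_\lambda$. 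Because the Schur $P$-functions indexed by strict partitions of a common size are linearly independent (they form part of a $\ZZ$-basis, as recorded after \eqref{PQ-eq}), we conclude $c_\lambda = 1$ and $c_\mu = 0$ for every strict $\mu \neq \lambda$ with $|\mu| = |\lambda|$; combined with the minimality of $d$, this gives $c_\mu = 0$ whenever $|\mu| \le |\lambda|$ and $\mu \neq \lambda$. Subtracting off the $\mu = \lambda$ term leaves $\wGP_\lambda - \GP_\lambda = \sum_{|\mu| > |\lambda|} c_\mu \GP_\mu$, which is exactly the asserted identity.

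There is no serious obstacle here beyond organizing the degree bookkeeping correctly: the result follows almost formally from the two cited inputs. The one point that warrants care is justifying that the infinite $\GP$-expansion restricts to a finite $\ZZ$-linear combination in each fixed degree, so that extracting the degree-$|\lambda|$ component and invoking linear independence of the $P_\mu$ is legitimate. Once that is established, matching lowest-degree terms completes the argument.
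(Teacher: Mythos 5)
Your proposal is correct and follows essentially the same route as the paper: the paper deduces the corollary directly from the preceding proposition together with the quoted fact from \cite[Prop.~3.4]{IkedaNaruse}, noting that every $\GP_\mu$ has lowest degree term $P_\mu$ and comparing lowest-degree terms. The paper leaves the degree bookkeeping and the appeal to linear independence of the $P_\mu$ implicit, whereas you spell these steps out; nothing in your argument deviates from or adds to the paper's underlying reasoning.
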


\subsection{Proof of Theorem~\ref{sv-thm}}\label{sv-proof-sect}

Fix a strict partition $\lambda$.
We derive Theorem~\ref{sv-thm} after  proving a pair of technical lemmas.

\begin{lemma} \label{e-op-SetDTab}
Fix $i \in [n-1]$ and $T \in \SetDTab_n(\lambda)$.
Suppose there are $i$-unpaired letters equal to $i+1$ in $\revrow(T)$.
Let $(x,y)$ be the box of $T$ containing the first such $i+1$. Assume that changing this $i+1$ to $i$ does not yield a set-valued decomposition tableau.
Then:

\ben 

\item[(1)] None of the following occurs:
\ben
\item $x=y$ and some  $z>y$ has $i \in T_{x+1,z}$.
\item $x<y$ and some $z>y$ has $\max(T_{x+1,y}) \geq i \in T_{x+1,z}$.
\item $1<x<y$ and some $1<z < y$ has $\min(T_{x-1,z}) \leq i \in T_{xz}$.
\een
Equivalently, $T$ does not contain any of the following configurations:
\[
 \ytabc{0.8cm}{
 \none & \cdots & i   \\
 T_{xy} & \cdots & \   
 },
\quad
 \ytabc{0.8cm}{
 \geq i & \cdots & i  \\
 T_{xy} & \cdots & \ 
 },
\quord
 \ytabc{0.8cm}{
 i & \cdots & T_{xy}  \\
 \leq i & \cdots & \ 
 }.
 \]
 
 \item[(2)]  A box of $T$  preceding $(x,y)$ in the reverse row reading word order
contains both  $i$ and $i+1$. Let $(a,b)$ be the last such box.
Then $a \in \{x-1,x\}$ and one of the following holds:
\ben
\item $a=x$, $b=y+1$, and $\min(T_{x, y+2})\leq i$, so  $T$ has the configuration
\[
 \ytabc{1.2cm}{
   \cdots & \barr{c} i+1\\ \in T_{xy}  \earr & \barr{c} i,i+1 \\ \in T_{ab}\earr  & \leq i & \cdots}.
   \]
\item  $a =x$, $b>y$, and $\min(T_{x+1,b}) < i$,  so  $T$ has the configuration
\[
  \ytabc{1.2cm}{
 \none & \cdots  &   & \cdots & <i  \\
\ & \cdots &\barr{c} i+1\\ \in T_{xy}  \earr  & \cdots & \barr{c} i,i+1 \\ \in T_{ab}\earr 
 }.
 \]
\item $a=x-1$, $b<y$, and $\max(T_{ay})>i+1$, so  $T$ has the configuration
 \[ \ytabc{1.2cm}{
 \none & \cdots  &   & \cdots & \barr{c} i+1\\ \in T_{xy}  \earr   \\
\ & \cdots &\barr{c} i,i+1 \\ \in T_{ab}\earr  & \cdots & >i+1 
 }.\]
\een

\item[(3)] No box of $T$ between $(a,b)$ and $(x,y)$ in the reverse row reading word order 
contains $i$ or $i+1$.

\item[(4)] Removing $i+1$ from $T_{ab}$ and adding $i$ to $T_{xy}$ 
yields a set-valued decomposition tableau.

\een
\end{lemma}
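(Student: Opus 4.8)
The plan is to reduce everything to the local description of decomposition tableaux in Lemma~\ref{decomptab-characterisation}, read across every distribution of $T$. Membership $T\in\SetDTab_n(\lambda)$ is equivalent to requiring, in each distribution, that all rows are hook words and that none of the patterns (a)--(b) of Lemma~\ref{decomptab-characterisation} occurs, and I would first rewrite this as a list of $\min$/$\max$ inequalities between neighbouring boxes. Two preliminary remarks drive the argument. First, an $i$ and an $i+1$ lying in a common box are read consecutively (in decreasing order) in $\revrow(T)$ and hence pair off, so the box $(x,y)$ carrying the first $i$-unpaired $i+1$ cannot contain $i$. Second, the failure of the naive change is witnessed by a single distribution: there is a decomposition tableau $D$, obtained from a valid distribution of $T$ by fixing $D_{xy}=i+1$, such that replacing this entry by $i$ destroys validity (any bad distribution of the changed tableau must assign $i$ to $(x,y)$, since otherwise it would already be a distribution of $T$). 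Because only row $x$ and the patterns among rows $x-1,x,x+1$ involve the altered box, the destruction must be of one of five kinds: row $x$ ceases to be a hook word, or one of the patterns (a), (b) is created between rows $x$ and $x+1$, or between rows $x-1$ and $x$.

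For part (1), I would use the parenthesis matching on $\revrow(T)$. The first-unpaired hypothesis says precisely that, reading from the distinguished $i+1$ onward, no $i$ occurs before the next $i+1$; equivalently the distinguished ``$($'' is never closed. Each configuration in (1) would force an $i$ to be read after $(x,y)$ with no intervening $i+1$: in (1a) and (1b) the forbidden pattern (a) across rows $x$ and $x+1$ forces row $x+1$ to carry no $i+1$ in the relevant columns, while in (1c) the hook-word condition in row $x$ forbids an $i+1$ between $(x,y)$ and an $i$ sitting in row $x$ at a column $<y$. In every case this $i$ would close the distinguished $i+1$, a contradiction. This part uses only the first-unpaired property together with $T\in\SetDTab_n(\lambda)$, not the failure hypothesis; its effect is to eliminate exactly the two pattern-(a) destruction kinds, namely the configurations (1a), (1b) between rows $x,x+1$ and the configuration (1c) between rows $x-1,x$.

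For parts (2) and (3), I would exploit the fact that, by part (1), the destruction of $D$ must be one of the remaining three kinds: a hook-word failure in row $x$ (yielding (2a)), pattern (b) between rows $x$ and $x+1$ (yielding (2b)), or pattern (b) between rows $x-1$ and $x$ (yielding (2c)). Each of these singles out a box $B$ that $D$ fills with $i+1$, lying respectively immediately to the right of $(x,y)$, strictly to the right in row $x$, or strictly to the left in row $x-1$; comparing the destroyed pattern with its (absent) counterpart in the valid tableau $D$ pins the value at $B$ down to exactly $i+1$ and extracts the side inequalities $\min(T_{x,y+2})\le i$, $\min(T_{x+1,b})<i$, and $\max(T_{x-1,y})>i+1$. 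A further bracket-matching argument shows that the $i+1$ at $B$, being read before the distinguished one and hence matched, must be matched by an $i$ inside $B$ itself: any external partner would be an $i$ forbidden by the first-unpaired property exactly as in part (1), or would produce a box containing both $i$ and $i+1$ nearer to $(x,y)$. This yields $\{i,i+1\}\subseteq T_B$, identifies $(a,b)=B$ as the last box before $(x,y)$ containing both $i$ and $i+1$, gives the adjacency $a\in\{x-1,x\}$, and forces the absence of $i$ and $i+1$ strictly between $(a,b)$ and $(x,y)$ claimed in (3).

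Finally, for part (4) I would verify directly that the tableau obtained by deleting $i+1$ from $T_{ab}$ and inserting $i$ into $T_{xy}$ lies in $\SetDTab_n(\lambda)$, by checking in every distribution that rows $x-1$ and $x$ remain hook words and that no pattern (a)--(b) is created among rows $x-2,\dots,x+1$; here the side inequalities of (2a)--(2c) and the emptiness statement of (3) are exactly the data needed to run the $\min$/$\max$ bookkeeping. I expect this verification, together with the argument in part (2) that locates $B$ and proves it contains both $i$ and $i+1$, to be the main obstacle: the correction redistributes entries between two boxes that may lie in different rows, so one must control their interaction with all neighbours across all distributions, and the three side conditions are precisely what prevents a new violation from appearing.
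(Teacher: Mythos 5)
Your skeleton for parts (2)--(4) is essentially the paper's: witness the failure by a single distribution, use part (1) to reduce the damage to a hook-word failure in row $x$ or to pattern (b) of Lemma~\ref{decomptab-characterisation} against row $x+1$ or $x-1$, locate the box $B$, extract the side inequalities by a before/after comparison, and then deduce (3) and (4). The genuine divergence is part (1), where you replace the paper's argument by pure bracket counting, and this is where your proposal breaks. In case (1b), your claim that pattern (a) ``forces row $x+1$ to carry no $i+1$ in the relevant columns'' requires an entry $\geq i+1$ in $T_{x+1,y}$ to assemble the pattern, whereas the configuration only supplies $\max(T_{x+1,y})\geq i$; when $\max(T_{x+1,y})=i$, row $x+1$ may a priori contain $i+1$'s to the right of column $y$, and boxes of row $x$ in columns $<y$ --- which are read between $(x,y)$ and row $x+1$ and which you never discuss --- may contain $i+1$'s as well. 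Each such $i+1$ is an intervening ``$($'' that can absorb the ``$)$'' at $(x+1,z)$, so ``this $i$ would close the distinguished $i+1$'' does not follow; one needs an actual count (using, e.g., the $i$ that is then forced into $T_{x+1,y}$), or the paper's device: build the distributions $\mathrm{distr}_{i,\max/\min}(T)$ that keep $(x,y)$ first-unpaired, apply $e_i$, and invoke the fact that $e_i$ preserves $\DTab_n(\lambda)$ (Example~\ref{revrow-thm}) to turn the post-change configuration into a forbidden pattern. Your counting does handle (1a), and most of (1c), though even there hook words alone do not exclude $i+1\in T_{x,z}$ (the word $i+1,i,i+1$ is a hook word); you need pattern (a) against row $x-1$, via $\min(T_{x-1,z})\leq i$, to rule out internal pairing of that $i$.

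The second gap is the pivotal step of part (2): that $B$ contains both $i$ and $i+1$. The matched partner of the $i+1$ at $B$ lies strictly between $B$ and $(x,y)$, i.e.\ \emph{before} the distinguished letter in reading order, so it cannot be excluded ``by the first-unpaired property exactly as in part (1)'': all of part (1)'s configurations concern $i$'s read \emph{after} $(x,y)$. Ruling out an external partner is a separate, substantive argument --- for (2b), an $i$ in a box $(x,f)$ with $y<f<b$, together with $\min(T_{x+1,b})<i$ and $i+1\in T_{x,b}$, already creates pattern (b) in $T$ itself; for (2c) there are several further subcases mixing hook-word and pattern arguments --- and this case analysis is the bulk of the paper's proof. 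Relatedly, ``hook-word failure yields (2a)'' silently discards the other failure mode (the changed entry landing in the increasing part), which the paper must and does refute. You correctly flag these points as ``the main obstacle,'' but flagging is not proving: as written, the proposal establishes (1a), the classification of failure modes, and the shape of the final verification, while the statements that carry the lemma --- (1b), the both-letters claim for $B$, and hence (3) and (4) --- remain unproved, and the mechanisms you name for them would not succeed.
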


\begin{proof}
Let $\text{distr}_{i, max}(T)$ be the distribution of $T$ formed by choosing $i$ in every box that contains $\{i, i+1\}$ appearing before $(x,y)$ in $\revrow(T)$, choosing $i+1$ in every box containing $\{i, i+1\}$ that appears after $(x,y)$ in $\revrow(T)$, choosing either $i$ or $i+1$ in boxes that contain $i$ or $i+1$, respectively, but not both, and choosing the largest entry in all other boxes. 
Then let $\text{distr}_{i, min}(T)$ be the distribution of $T$ formed in almost the same way, except that we choose the smallest entry in boxes that do not contain $i$ or $i+1$. For both of these distributions, the  $i+1$ in box $(x,y)$ remains the first $i$-unpaired letter in the reverse row reading word.
Also, both   $\text{distr}_{i, max}(T)$ and $ \text{distr}_{i, min}(T)$
must be valid decomposition tableaux, as are any nonzero tableau obtained by applying a crystal operator $e_j$ or $f_j$ to  $\text{distr}_{i, max}(T)$ or $ \text{distr}_{i, min}(T)$.

Now we can use the operations $\text{distr}_{i, max}$ and $ \text{distr}_{i, min}$ to prove part (1). Suppose the configuration in (1)(a) occurs. 
Then either $i+1 \in T_{x+1,z}$, in which case the $i+1$ in both boxes $T_{xy}$ and $T_{x+1,z}$ create a forbidden pattern from Lemma~\ref{decomptab-characterisation}(a), 
or $i+1 \notin T_{x+1, z}$. In the second case, $e_i(\text{distr}_{i, max}(T))$ is formed from $\text{distr}_{i, max}(T)$ by changing the $i+1$ in box $(x,y)$ to $i$. Then the $i$ in both boxes $(x,y)$ and $(x+1, z)$ of $e_i(\text{distr}_{i, max}(T))$
form a forbidden pattern from Lemma~\ref{decomptab-characterisation}(a). 
Thus, the configuration  in (1)(a) must not occur.

Similarly suppose the configuration in (1)(b) occurs. Then $\text{distr}_{i, max}(T)$ selects $i+1$ from $T_{xy}$, $i$ from $T_{x+1,z}$ and either $\max(T_{x+1,y})$ or $i+1$ or $i$ from $T_{x+1,y}$. In this case, $e_i(\text{distr}_i(T))$ is formed from $\text{distr}_i(T)$ by changing the $i+1$ in box $(x,y)$ to $i$. Now boxes $(x+1,y)$, $(x+1,z)$ and $(x,y)$ form a forbidden pattern in Lemma~\ref{decomptab-characterisation}(a), which is impossible.

Finally suppose the configuration in $(c)$ occurs. Either $i+1 \in T_{xz}$, in which case the minimum entry in box $T_{x-1,z}$ and the $i+1$ in both boxes $T_{xy}$ and $T_{xz}$ create a forbidden pattern from Lemma~\ref{decomptab-characterisation}(a), 
or $i+1 \notin T_{xz}$. In the second case $e_i(\text{distr}_{i, min}(T))$ is formed from $\text{distr}_{i, min}(T)$ by changing the $i+1$ in box $(x,y)$ to $i$. But then boxes $(x,z)$, $(x,y)$ and $(x-1, z)$ in  $e_i(\text{distr}_{i, min}(T))$ form a forbidden pattern in Lemma~\ref{decomptab-characterisation}(a), which is impossible.

We conclude that none of the configurations listed in part (1) occurs.

For part (2),  we observe that changing $i+1$ to $i$ in $T_{xy}$ fails to produce a set-valued decomposition tableau when either (i) the row $x$ is not a hook word for some distribution of $T$, or (ii) the row $x$ is part of a forbidden pattern from Lemma~\ref{decomptab-characterisation}, which may involve either row $x+1$ or $x-1$. 

For case (i), there are two possibilities to consider: ($\alpha$) the $i+1$ in box $(x,y)$ belongs to the decreasing part of row $x$ in a particular distribution $T$, and there exists an $i+1$ in box $(x,y+1)$ or ($\beta$) the $i+1$ in box $(x,y)$ is part of the increasing part of row $x$ in a distribution $T$, and there exists an $i$ in box $(x,y-1)$ that is also part of the increasing part of row $x$ in the same distribution. Possibility ($\alpha$) occurs when there is also an $i$ in box $(x,y+1)$; otherwise, the $i+1$ in box $(x,y+1)$ would be an earlier unpaired $i+1$, and this possibility corresponds to case (2)(a).

On the other hand, case ($\beta$) cannot actually occur. If such a scenario were to arise, then $i+1$ would also be present in box $(x,y-1)$; otherwise, the $i$ in box $(x,y-1)$ would be paired with the $i+1$ in box $(x,y)$. However, this would lead to the contradiction that an element in box $(x,y-2)$ is smaller than $i$, implying that there exists a distribution that has consecutive entries $a, i+1, i+1$ with $a < i$ in row $x$, which violates the condition of each row being a hook word in a decomposition tableau.

Now we examine case (ii). 
As changing $i+1$ to $i$ in $T_{xy}$ does not result in a set-valued decomposition tableau, there must be an earlier unpaired $i+1$ among the $i$-unpaired letters in the reverse reading word of a specific distribution of $T$. Suppose the last such $i+1$ 
occurs in box $(c,d)$. 
In view of part (1),
when changing $i+1$ to $i$ in $T_{xy}$, only the forbidden patterns described in Lemma~\ref{decomptab-characterisation}(b) remain as possibilities. These patterns occur if and only if one of the configurations in part (2)(b) or (2)(c) arises. 

Next we show that the box $T_{cd}$ must contain both $i$ and $i+1$.
Observe that $T_{cd}$ must contain $i+1$ to ensure that some forbidden pattern arises when changing the $i+1$ in $T_{xy}$ to $i$.
Suppose $T_{cd}$ does not contain $i$. Since $T_{xy}$ represents the first occurrence of an $i$-unpaired $i+1$ in $\revrow(T)$, the $i+1$ in box $T_{cd}$ must be paired with an $i$ in a box $T_{xf}$ where $y<f<d$ for case $(2)(b)$, or in a box $T_{ef}$ where $e=x$, $f>y$, or $e=x-1$, $f<d$ for case (2)(c). However, in case (2)(b), the distribution with entries $\min(T_{x+1,b}) < i$, $T_{xf}$, and $T_{cd}$ forms the same type of forbidden pattern right from the start. This is illustrated by the picture below
\[
  \ytabc{1.2cm}{
 \none & \cdots  &   & \cdots &  & \cdots & \textcolor{red}{<i}  \\
\ & \cdots &\barr{c} i + 1 \\ \in T_{xy}\earr & \cdots & \textcolor{red}{i \in T_{xf}} & \cdots & \textcolor{red}{\barr{c} i+1 \\ \in T_{cd} \earr}
 }
 \]
 where the forbidden pattern is highlighted in red.

In the subcase for (2)(c) where $i$ is present in $T_{x-1,f}$ with $f<d$, we consider two slightly different situations: one where $d=y-1$ and another where $d<y-1$. In both situations, we have $\max(T_{xd}) < i+1$, since if $\max(T_{xd}) \geq i+1 $, it would lead to a forbidden pattern with boxes $T_{xd}$, $T_{xy}$, and $T_{cd} = T_{x-1,d}$, as described in Lemma~\ref{decomptab-characterisation}(a).

Now, we focus on the situation where $d=y-1$. In this case, $\max(T_{xd})$ cannot be $i$, because if it were, the $i+1$ in $T_{xy}$ would be paired with this $i$. However, in this situation, the boxes $T_{x,y-1}$, $T_{x-1,f}$, and $T_{x-1,y-1}$ form a forbidden pattern as described in Lemma~\ref{decomptab-characterisation}(b). The situation is illustrated below:

 \[ \ytabc{1.5cm}{
 \none & \cdots  &   & \cdots & \textcolor{red}{<i} & \barr{c} i + 1 \\ \in T_{xy}\earr &  \cdots  \\
\ & \cdots & \textcolor{red}{i} & \cdots & \textcolor{red}{\barr{c} i+1 \\ \in T_{cd} \earr} & >i+1&  \cdots
 }.\]

If $d < y-1$, then $\max(T_{xd})$ still cannot be equal to $i$. If we assume that $\max(T_{xd}) = i$, then there would be an $i+1$ in a box $T_{xg}$ where $d<g<y$. However, it is impossible to have a subword $i, i+1, i+1$ in the boxes $T_{xd}$, $T_{xg}$, $T_{xy}$ because the $i+1 \in T_{xg}$ would lie in both the increasing and decreasing parts of a hook word in a specific distribution of $T$.
Therefore, $\max(T_{xd})$ must be strictly less than $i$, but then the boxes $T_{xd}$, $T_{x-1,f}$, and $T_{x-1,d}$ form a forbidden pattern described in Lemma~\ref{decomptab-characterisation}(b). The situation is illustrated below:
\[ \ytabc{1.5cm}{
 \none & \cdots  &   & \cdots & \textcolor{red}{<i} & \cdots & \barr{c} i + 1 \\ \in T_{xy}\earr &  \cdots  \\
\ & \cdots & \textcolor{red}{i} & \cdots &\textcolor{red}{\barr{c} i + 1 \\ \in T_{cd}\earr}&\cdots & >i+1&  \cdots
 }.\]

Finally, in the subcase for (2)(c) where $i$ is present in $T_{xf}$ with $f>y$, note that our assumptions imply that the $i+1$ in $T_{xy}$ must be part of the decreasing part of row $x$ in any distribution where $i$ is in $T_{xf}$. Consequently, we have $\max(T_{c+1,d}) \geq i+1$. However, this leads to a forbidden pattern of type (a) in Lemma~\ref{decomptab-characterisation} with the boxes $T_{c+1,d}$, $T_{xy}$, and $T_{cd}$. The situation is illustrated below:
 \[ \ytabc{1.2cm}{
 \none & \cdots  & \textcolor{red}{\geq i+1}  & \cdots & \textcolor{red}{\barr{c} i + 1 \\ \in T_{xy}\earr} & \cdots & i & \cdots  \\
\ & \cdots &\textcolor{red}{\barr{c} i,i + 1 \\ \in T_{cd}\earr}& \cdots & >i+1 & \cdots & & \cdots
 }.\]
Therefore, we can conclude that the box $T_{cd}$ must contain both $i$ and $i+1$. 

We will now show that if there exists a box containing both $i$ and $i+1$ located strictly between the box marked $(c,d)$ and $(x,y)$ in the reverse row reading order, for cases (2)(b) and (2)(c), then this box must also yield a forbidden pattern in cases (2)(b) or (2)(c). Consequently, without loss of generality, we can deduce that box $(c,d)$ coincides with $(a,b)$, which is defined as the last such box preceding $(x,y)$ in the reverse row reading order containing both $i$ and $i+1$.

In fact, since $i+1 \in T_{cd}$, the same argument as above demonstrates that no box in $T$ located strictly between $(c,d)$ and $(x,y)$ in the reverse row reading word order can contain $i$. This also implies that no box in $T$ located strictly between $(c,d)$ and $(x,y)$ in the reverse row reading word order can contain $i+1$, since if it did, this $i+1$ would be an earlier $i$-unpaired letter in $\revrow(T)$. Hence, we can conclude that (2) is proven by asserting $(a,b) = (c,d)$, and simultaneously, part (3) is also established. Note that (3) is trivial in case (2)(a).

Finally, part (4) can be concluded in two steps. First, removing $i+1$ from $T_{ab}$ will certainly result in a set-valued decomposition tableau. This is because removing $i+1$ does not introduce new forbidden patterns, and we are only eliminating a subset of the distributions that form decomposition tableaux in $T$. Second, using part (3), we can observe that adding $i$ to $T_{xy}$ creates new forbidden patterns of the form (2)(b) and (2)(c) only under the occurrence of one of the following two cases:
\[
  \ytabc{1.2cm}{
 \none & \cdots  &   & \cdots & <i & \textcolor{red}{<i} \\
\ & \cdots & \barr{c} i, i+1  \\ \in T_{xy}\earr& \cdots & \textcolor{red}{i\in T_{ab}} & \textcolor{red}{i+1}
 }
\]
or
 \[
 \ytabc{1.2cm}{
 \none & \cdots  &  & && \cdots & \textcolor{red}{\barr{c} i,i+1 \\ \in T_{xy}\earr}  \\
\ & \cdots &i\in T_{ab} &i+1 &\textcolor{red}{>i+1}& \cdots & \textcolor{red}{\gg i+1} 
 }.\]
 Here we use the notation ``$\gg i+1$'' to mean that the maximum entry in that box is greater than the maximum entry in the box labeled $>i+1$.
However, these cases lead to forbidden patterns in distributions of $T$ right from the start, as highlighted in red in the above tableaux. Therefore, we can conclude that removing $i+1$ from $T_{ab}$ and adding $i$ to $T_{xy}$ indeed results in a set-valued decomposition tableau.
\end{proof}

Our second technical lemma is the $f$-version of Lemma~\ref{e-op-SetDTab}.

\begin{lemma}\label{f-op-SetDTab}
Fix $i \in [n-1]$ and $T \in \SetDTab_n(\lambda)$.
Suppose there are $i$-unpaired letters equal to $i$ in $\revrow(T)$.
Let $(x,y)$ be the box of $T$ containing 
the last such $i$. Assume that changing this $i$ to $i+1$ does
not yield a set-valued decomposition tableau.
Then:
\ben 

\item[(1)] None of the following occurs:
\ben
\item $1<x\leq y$ and $i+1 \in T_{x-1,x-1}$.
\item $1< x\leq y$ and some $z>y$ has $\min(T_{x-1,y}) \leq i +1\in T_{xz}$.
\item $1<x<y$ and some $1<z < y$ has $\max(T_{xz}) \geq i+1 \in T_{x-1,z}$.
\een
Equivalently, $T$ does not contain any of the following configurations:
\[
 \ytabc{1.2cm}{
 \none & \cdots & T_{xy}   \\
i+1& \cdots & \   
 },
\quad
 \ytabc{1.2cm}{
T_{xy} & \cdots & i+1  \\
\leq i+1 & \cdots & \ 
 },
\quord
 \ytabc{1.2cm}{
\geq i+1 & \cdots & T_{xy}  \\
  i+1 & \cdots & \ 
 }.
 \]
 
 \item[(2)]  A box of $T$ following $(x,y)$ in the reverse row reading word order
contains both  $i$ and $i+1$. Let $(a,b)$ be the first such box.
Then $a \in \{x,x+1\}$ and one of the following holds:
\ben
\item $a=x$, $b=y-1$, and $\min(T_{x, y+1})\leq i$, so $T$ has the configuration
\[
 \ytabc{1.2cm}{
   \cdots &\barr{c} i,i+1  \\ \in T_{ab}\earr & \barr{c} i  \\ \in T_{xy}\earr& \leq i & \cdots}.
   \]
\item $a =x$, $b<y$, and $\min(T_{x+1,y})<i$,  so  $T$ has the configuration
\[
  \ytabc{1.2cm}{
 \none & \cdots  &   & \cdots & <i  \\
\ & \cdots & \barr{c} i,i+1  \\ \in T_{ab}\earr & \cdots & \barr{c} i  \\ \in T_{xy}\earr
 }.
 \]
\item $a=x+1$, $b>y$, and $\max(T_{xb})>i+1$, so  $T$ has the configuration
 \[
   \ytabc{1.2cm}{
 \none & \cdots  &   & \cdots &\barr{c} i,i+1  \\ \in T_{ab}\earr  \\
\ & \cdots & \barr{c} i  \\ \in T_{xy}\earr & \cdots & >i+1 
 }
 .\]
\een

\item[(3)] No box of $T$ between $(x,y)$ and $(a,b)$ in the reverse row reading word order 
contains $i$ or $i+1$.

\item[(4)] Removing $i$ from $T_{ab}$ and adding $i+1$ to $T_{xy}$ 
yields a set-valued decomposition tableau.

\een
\end{lemma}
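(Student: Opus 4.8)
The plan is to establish Lemma~\ref{f-op-SetDTab} as the exact mirror image of Lemma~\ref{e-op-SetDTab}, following that proof line by line under the formal duality that interchanges $i \leftrightarrow i+1$, reverses precedence in $\revrow(T)$ (so ``preceding'' becomes ``following''), swaps the rows $x-1 \leftrightarrow x+1$ and the columns $y-1 \leftrightarrow y+1$, flips each inequality ($\le i \leftrightarrow \ge i+1$ and $<i \leftrightarrow >i+1$), and exchanges the two families of forbidden configurations in Lemma~\ref{decomptab-characterisation}, sending type (a) to type (b). Under this dictionary the three configurations barred in part~(1) here are precisely the duals of those barred in Lemma~\ref{e-op-SetDTab}(1), and cases (2)(a), (2)(b), (2)(c) correspond to their namesakes there.

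First I would introduce the dual distributions. In place of $\mathrm{distr}_{i,max}(T)$ and $\mathrm{distr}_{i,min}(T)$ I select $i+1$ in every $\{i,i+1\}$-box appearing \emph{after} $(x,y)$ in $\revrow(T)$ and $i$ in every such box appearing \emph{before} $(x,y)$, choosing the minimal (respectively maximal) entry in all remaining boxes; the point of both choices is that the $i$ in box $(x,y)$ stays the last $i$-unpaired $i$, and that applying any $e_j$ or $f_j$ to these honest decomposition-tableau distributions again yields a decomposition tableau. Part~(1) is then dispatched exactly as before: assuming one of the three displayed configurations, either the relevant box already carries an $i+1$ and one reads off a forbidden pattern of type (b) from Lemma~\ref{decomptab-characterisation} directly, or it does not, in which case applying $f_i$ to the appropriate dual distribution produces such a pattern --- a contradiction in every case.

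The heart of the argument is parts~(2) and~(3). I would split according to why changing the $i$ at $(x,y)$ to $i+1$ fails: either row $x$ stops being a hook word, or a forbidden pattern of Lemma~\ref{decomptab-characterisation} is created, which after part~(1) must be of type (a) and is recorded as case (2)(b) or (2)(c). The hook-word failure yields case (2)(a) after ruling out the dual of the spurious subcase ($\beta$); I would then locate the first $\{i,i+1\}$-box $(a,b)$ following $(x,y)$, show $a \in \{x,x+1\}$, and prove that $(a,b)$ must contain both $i$ and $i+1$ by replaying --- in dual form --- the sub-analyses of whether $\max(T_{xd})$ can equal $i$, each of which collapses into an already-present forbidden pattern. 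The same $i+1 \in T_{ab}$ argument shows that no $\{i,i+1\}$-box lies strictly between $(x,y)$ and $(a,b)$, giving part~(3). Finally, for part~(4) I would check that deleting $i$ from $T_{ab}$ is harmless and that adding $i+1$ to $T_{xy}$ creates only patterns already present in some distribution of $T$.

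The main obstacle is that this duality is \emph{not} realized by any genuine involution of $\SetDTab_n(\lambda)$: the hook-word condition ``weakly decreasing then strictly increasing'' is not preserved under reversing a row and swapping $i \leftrightarrow i+1$, and the shifted shape admits no relevant reflection symmetry. Consequently I cannot simply quote Lemma~\ref{e-op-SetDTab}; every step that invokes the hook-word structure --- especially the determination of whether the distinguished entry at $(x,y)$ lies in the decreasing or the increasing part of its row, and the resulting exclusion of the subwords that the hook-word condition forbids --- must be re-proved in its dual incarnation. Getting this bookkeeping right, and confirming that it is type (a) rather than type (b) patterns that survive part~(1), is the delicate part of the proof.
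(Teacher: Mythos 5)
Your overall architecture does mirror the paper's proof: define the two extreme distributions adapted to the \emph{last} unpaired $i$ (so that the $i$ in box $(x,y)$ stays the last $i$-unpaired letter and crystal operators can be applied to honest decomposition tableaux), prove (1) by contradiction, split (2) into a hook-word failure (case (2)(a), after killing the spurious subcase $(\beta)$) versus a two-row forbidden pattern (cases (2)(b)/(2)(c)), then deduce (3) and (4). However, your guiding dictionary contains a genuine error that would derail both key steps: the passage from Lemma~\ref{e-op-SetDTab} to this lemma does \emph{not} exchange the two families of patterns in Lemma~\ref{decomptab-characterisation}. In both lemmas, the configurations of part (1) are precisely type (a) patterns, and it is the type (b) patterns that survive into part (2); there is no swap. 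Concretely, configuration (1)(a) here ($i+1 \in T_{x-1,x-1}$) produces, either directly in a distribution of $T$ or after applying $f_i$ to $\mathrm{distr}_{i,max}(T)$, the weak-inequality diagonal pattern $T_{x-1,x-1} \leq T_{x,y}$ of Lemma~\ref{decomptab-characterisation}(a); it contains no strict three-term chain, so the type (b) contradiction you propose to read off simply is not available. Dually, the configurations recorded in (2)(b) and (2)(c) \emph{are} strict chains --- for instance (2)(c) gives $T_{x+1,b} < T_{x,y} < T_{x,b}$, i.e.\ $i < i+1 < \max(T_{xb})$ after the change --- which is type (b), not type (a) as you assert twice, including in your closing sentence where you commit to confirming that type (a) survives part (1).

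The reason your heuristic fails is that the family of patterns created is governed not by whether one raises or lowers the distinguished entry, but by whether the modified box sits in the upper or lower row of the two-row pattern. Raising the entry in $(x,y)$ from $i$ to $i+1$ can create a type (a) pattern only when row $x$ is the \emph{upper} row (so the pattern involves row $x-1$; these are exactly the part (1) configurations), and a type (b) pattern only when row $x$ is the \emph{lower} row (so the pattern involves row $x+1$; these are (2)(b)/(2)(c), with (2)(c) having the box $(a,b)=(x+1,b)$ above). Your dictionary of swapping $x-1 \leftrightarrow x+1$ and flipping weak/strict inequalities does not carry forbidden patterns to forbidden patterns, which is why the (a)$\leftrightarrow$(b) exchange is false. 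Once this bookkeeping is reversed --- part (1) ruled out via Lemma~\ref{decomptab-characterisation}(a) exactly as in Lemma~\ref{e-op-SetDTab}, and part (2) handling the surviving type (b) patterns --- the remainder of your outline (the $(\alpha)$/$(\beta)$ hook-word analysis, the argument that the box $(a,b)$ must contain both $i$ and $i+1$, and the two-step verification of (4)) does track the paper's proof and goes through.
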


\begin{proof}
Define $\text{distr}_{i, max}(T)$ and $\text{distr}_{i, min}(T)$ in the same way as in Lemma~\ref{e-op-SetDTab}, but now using the definition of $(x,y)$ as the box of $T$ containing 
the last $i$-unpaired letter equal to $i$ in $\revrow(T)$. This lets us select two specific distribution for $T$ for which the $i$ in $(x,y)$ remains the last $i$-unpaired letter in the reverse row reading orders of both $\revrow(\text{distr}_{i, max}(T))$ and $\revrow(\text{distr}_{i, min}(T))$.
As in the previous proof,
both   $\text{distr}_{i, max}(T)$ and $ \text{distr}_{i, min}(T)$
must be valid decomposition tableaux, as are any nonzero tableau obtained by applying a crystal operator $e_j$ or $f_j$ to  $\text{distr}_{i, max}(T)$ or $ \text{distr}_{i, min}(T)$.

We use the operations $\text{distr}_{i, max}$ and $ \text{distr}_{i, min}$
like in the previous proof to derive part (1). Suppose the configuration in part (a) occurs. 
Then either $i \in T_{x-1,x-1}$, in which case the $i$ entries in boxes $T_{xy}$ and $T_{x-1,x-1}$ create a forbidden pattern from Lemma~\ref{decomptab-characterisation}(a), 
or $i \notin T_{x-1,x-1}$. In the latter case, $f_i(\text{distr}_{i, max}(T))$ is formed from $\text{distr}_{i, max}(T)$ by changing the $i$ in box $(x,y)$ to $i+1$. But then the $i+1$ entries in boxes $(x,y)$ and $(x-1,x-1)$ of $f_i(\text{distr}_{i, max}(T))$
form a forbidden pattern from Lemma~\ref{decomptab-characterisation}(a), which is impossible.

Similarly suppose the configuration in (b) occurs. Then $\text{distr}_{i, min}(T)$ selects $i$ from $T_{xy}$, $i+1$ from $T_{xz}$ and $\min(T_{x-1,y})$, $i$ or $i+1$ from $T_{x-1,y}$. In this case, $f_i(\text{distr}_i(T))$ is formed from $\text{distr}_i(T)$ by changing the $i$ in box $(x,y)$ to $i+1$. Now boxes $(x+1,y)$, $(x+1,z)$ and $(x,y)$  of $f_i(\text{distr}_i(T))$ form a forbidden pattern in Lemma~\ref{decomptab-characterisation}(a), which is impossible.

Finally suppose the configuration in (c) occurs. Either $i \in T_{x-1,z}$, in which case the maximum entry in box $T_{xz}$ and the $i$ in both boxes $T_{xy}$ and $T_{x-1,z}$ create a forbidden pattern from Lemma~\ref{decomptab-characterisation}(a), 
or $i \notin T_{x-1,z}$. In the latter case $f_i(\text{distr}_{i, max}(T))$ is formed from $\text{distr}_{i, max}(T)$ by changing the $i$ in box $(x,y)$ to $i+1$. Now boxes $(x,z)$, $(x,y)$ and $(x-1, z)$ of $f_i(\text{distr}_{i, max}(T))$ form a forbidden pattern from Lemma~\ref{decomptab-characterisation}(a), which is impossible.

We conclude that none of the configurations listed in part (1) occurs.

For part (2),  note that  changing $i$ to $i+1$ in $T_{xy}$ fails to produce a set-valued decomposition tableau  when either (i) the row $x$ is not a hook word for some distribution of $T$, or (ii) the row $x$ is part of a forbidden pattern as described in Lemma~\ref{decomptab-characterisation}, which may involve either row $x+1$ or $x-1$. 

For case (i), there are two possibilities to consider: ($\alpha$) the $i$ in box $(x,y)$ belongs to the decreasing part of row $x$ in a particular distribution $T$, and there exists an $i$ in box $(x,y-1)$ or ($\beta$) the $i$ in box $(x,y)$ is part of the increasing part of row $x$ in a distribution $T$, and there exists an $i+1$ in box $(x,y+1)$. Possibility ($\alpha$) occurs when there is also an $i+1$ in box $(x,y-1)$; otherwise, the $i$ in box $(x,y+1)$ would be a later unpaired $i$, and this possibility corresponds to case (2)(a).

On the other hand, possibility ($\beta$) cannot actually arise. If such a scenario were to occur, it would mean that $i$ would also be present in box $(x,y+1)$; otherwise, the $i$ in box $(x,y)$ would be paired with the $i+1$ in box $(x,y+1)$. However, this would lead to the contradiction that an element in box $(x,y-1)$ is smaller than $i$, implying that there exists a distribution that has consecutive entries $a, i, i$ with $a < i$ in row $x$, which violates the condition of each row being a hook word in a decomposition tableau.

Now we examine case (ii).
As changing $i$ to $i+1$ in $T_{xy}$ does not result in a set-valued decomposition tableau, there must be a later unpaired $i$ among the $i$-unpaired letters in the reverse reading word of a specific distribution of $T$. 
Suppose the first such $i$ occurs in box $(c,d)$.
In view of part (1),
when changing $i$ to $i+1$ in $T_{xy}$, only the forbidden patterns described in Lemma~\ref{decomptab-characterisation}(b) remain as possibilities. These patterns occur if and only if one of the configurations in part (2)(b) or (2)(c) arises. 

Next we show that the box $T_{cd}$ must contain both $i$ and $i+1$.
Observe that $T_{cd}$ must contain $i$ to ensure that some forbidden pattern arises when changing the $i$ in $T_{xy}$ to $i+1$.
Suppose $T_{cd}$ does not contain $i+1$. Since $T_{xy}$ represents the last occurrence of an $i$-unpaired $i$ in $\revrow(T)$, the $i$ in box $T_{cd}$ must be paired with an $i+1$ in a box $T_{xf}$ where $d<f<y$ for case (2)(b), or in a box $T_{ef}$ where $e=x$, $f<y$, or $e=x+1$, $f>d$ for case (2)(c). However, in case (2)(b), the distribution with entries $i \in T_{cd}$, $x+1 \in T_{xf}$, and $i \in T_{xy}$ violates the condition of being a hook word for each row in $T$ right from the start. This is illustrated in the picture below:
\[
  \ytabc{1.5cm}{
\ & \cdots &i \in T_{cd} & \cdots &\barr{c} i +1 \\ \in T_{xf}\earr& \cdots & i \in T_{xy}
 }.
 \]

In the subcase for (2)(c) where $i+1$ is present in $T_{x+1, f}$ with $f>d$, notice that $\max(T_{xf}) > \max(T_{xd})$, since $\max(T_{xd})>i+1 > i \in T_{xy}$. However, the entries $i+1$, $\max(T_{xd})$ and $\max(T_{xf})$ in boxes $T_{x+1, f}$, $T_{xd}$ and $T_{xf}$ respectively form a forbidden pattern of type (b) in Lemma~\ref{decomptab-characterisation}. The situation is illustrated below:
 \[ \ytabc{1.5cm}{
 \none & \cdots  &   & \cdots  & i \in T_{cd} &  \cdots & \textcolor{red}{\barr{c} i+1 \\ \in T_{x+1,f}\earr}  \\
\ & \cdots & i \in T_{xy} & \cdots  & \textcolor{red}{>i+1}&  \cdots & \textcolor{red}{\gg i+1}
 },\] 
where  the notation ``$\gg i+1$'' again means that the maximum entry in that box is greater than the maximum entry in the box labeled $>i+1$.

Finally, in the subcase for (2)(c) where $i+1$ is present in $T_{xf}$ with $f<y$, the entries $i$, $i+1$, and $\max(T_{xd})$ in boxes $T_{cd}$, $T_{xf}$ and $T_{xd}$ respectively form a forbidden pattern of type (b) in Lemma~\ref{decomptab-characterisation}
The situation is illustrated below:
 \[ \ytabc{1.2cm}{
 \none & \cdots  &   & \cdots &  & \cdots &\textcolor{red}{ i \in T_{cd} }& \cdots  \\
\ & \cdots &\textcolor{red}{\barr{c}  i+1 \\ \in T_{xf}\earr} & \cdots & T_{xy} & \cdots & \textcolor{red}{>i+1} & \cdots
 }.\]
Therefore, we conclude that the box $T_{cd}$ must contain both $i$ and $i+1$. 

We will now show that if there exists a box containing both $i$ and $i+1$ located strictly between the box marked $(x,y)$ and $(c,d)$ in the reverse row reading order, for cases (2)(b) and (2)(c), then this box must also yield a forbidden pattern in cases (2)(b) or (2)(c). Consequently, without loss of generality, we can deduce that box $(c,d)$ coincides with $(a,b)$, which is defined as the first box after $(x,y)$ in the reverse row reading order containing both $i$ and $i+1$.

In fact, since $i \in T_{cd}$, the same argument as above demonstrates that no box in $T$ located strictly between $(x,y)$ and $(c,d)$ in the reverse row reading word order can contain $i+1$. This also implies that no box in $T$ located strictly between $(x,y)$ and $(c,d)$ in the reverse row reading word order can contain $i$, since if it did, this $i$ would be a later $i$-unpaired letter in $\revrow(T)$. Hence, we can conclude that (2) is proven by asserting $(a,b) = (c,d)$, and simultaneously, part (3) is also established. Note that (3) is trivial in case (2)(a).

Finally, part (4) can be concluded in two steps. First, removing $i$ from $T_{ab}$ will certainly result in a set-valued decomposition tableau.
  Second, using part (3), we observe that adding $i+1$ to $T_{xy}$ creates new forbidden patterns of the form (2)(b) and (2)(c) only when one of the following cases occurs:
\[
  \ytabc{1.5cm}{
 \none & \cdots  &   & \cdots & <i &\cdots & \textcolor{red}{i} \\
\ & \cdots &\textcolor{red}{\barr{c} i+1  \\ \in T_{ab} \earr}& \cdots &\barr{c} i, i+1  \\ \in T_{xy}\earr &\cdots &  \textcolor{red}{>i+1}
 }
\]
or
 \[
 \ytabc{1.5cm}{
 \none & \cdots  &  & \cdots & i & \cdots & \textcolor{red}{\barr{c} i+1 \\ \in T_{ab} \earr}  \\
\ & \cdots & \barr{c} i, i+1 \\ \in T_{xy} \earr & \cdots &\textcolor{red}{>i+1} & \cdots &  \textcolor{red}{\gg i+1} 
 }.\]
However, these cases lead to forbidden patterns in distributions of $T$ right from the start, as highlighted in red in the above tableaux. Therefore, we can conclude that removing $i$ from $T_{ab}$ and adding $i+1$ to $T_{xy}$ indeed results in a set-valued decomposition tableau.
\end{proof}

 We can now supply the main proof of this section.

\begin{proof}[Proof of Theorem~\ref{sv-thm}]
It is clear from Lemmas~\ref{e-op-SetDTab} and \ref{f-op-SetDTab}---in particular, part
(4) of each statement---that if $i \in \{\bar 1,1,2,\dots,n-1\}$ then our definitions of $e_i$ and $f_i$
give well-defined maps $\SetDTab_n(\lambda) \to \SetDTab_n(\lambda)\sqcup\{0\}$.

It is also evident that 
when they do not act as a zero, $e_i$ and $f_i$ add or subtract $\e_i-\e_{i+1}$ to the weight,
while $e_{\bar 1}$ and $f_{\bar 1}$ add or subtract $\e_1-\e_{2}$.
For $i \in[n-1]$ and $T\in \SetDTab_n(\lambda)$, we have $\varphi_i(T) - \varepsilon_i(T)
=\weight(T)_i-\weight(T)_{i+1}$
since $\varphi_i(T)$ and $\varepsilon_i(T)$ are the respective numbers of $i$-unpaired  
letters in $\revrow(T)$ equal to $i$ and $i+1$.

The remarks before Theorem~\ref{sv-thm} show
that if $T,U \in \SetDTab_n(\lambda)$ then $e_{\bar 1}(T) =U$ if and only if $T = f_{\bar 1}(U)$.
The only thing remaining is the analogous property for $e_i$ and $f_i$ when $i \in[n-1]$,
which we can check using 
Lemmas~\ref{e-op-SetDTab} and \ref{f-op-SetDTab}.

Suppose $e_i(T) \neq 0$. If $e_i(T)$ is formed from $T$
by changing the first $i$-unpaired $i+1$ letter in $\revrow(T)$ to $i$,
then this newly created $i$ will be the last $i$-unpaired $i$ in $\revrow(e_i(T))$
and so $f_i$ will act to change it back to $i+1$, giving $f_i(e_i(T)) = T$. 

If
instead $e_i(T)$ is formed by removing $i+1$ from $T_{ab}$ and adding $i$ to $T_{xy}$
using the notation in Lemma~\ref{e-op-SetDTab}, then the remaining $i \in T_{ab}$
will become the last $i$-unpaired $i$ in $\revrow(e_i(T))$.
Inspecting cases (a), (b), and (c) in part (2) of Lemma~\ref{e-op-SetDTab} shows that changing this $i$ to $i+1$ will not produce a set-valued decomposition tableau.
We are therefore in the situation of Lemma~\ref{f-op-SetDTab},
from which it follows again that $f_i(e_i(T)) = T$.
The argument to show that $e_i(f_i(T)) = T$ when $f_i(T) \neq 0$ is similar, just
swapping the roles of Lemmas~\ref{e-op-SetDTab} and \ref{f-op-SetDTab}.
\end{proof}

\section{Second construction}

This section describes  another crystal structure on set-valued decomposition tableaux,
which is based on a construction of Yu \cite{Yu} for unshifted tableaux.
Unlike our results in the previous section, we can place this crystal in a more general
abstract framework, involving objects that we call \defn{square root crystals}. 
We will see that a natural tensor product for square root crystals  motivates the crystal operator definitions in \cite{Yu}, and also provides a novel interpretation
of the nonnegative integer coefficients appearing in the product expansion $G_\lambda G_\mu = \sum_\nu c_{\lambda\mu}^\nu G_\nu$.

\subsection{Abstract square root crystals}\label{sqrt-crystal-sect}

Let $n$ be any positive integer.
Suppose $\cB$ is a nonempty set with maps $\weight  :  \cB\to \ZZ^n$
and 
$e'_i,f'_i  :  \cB \to \cB \sqcup \{0\}$  for $i \in [n-1]$,
where $0 \notin \cB$.
Define $\varepsilon'_i, \varphi'_i  :  \cB \to \{0,1,2,\dots\}\sqcup \{\infty\}$ 
by 
\be
\label{var'-eq}
\varepsilon'_i(b) := \sup\left\{ k\geq 0 \mid (e'_i)^k(b) \neq 0\right\}
\text{ and }
\varphi'_i(b) := \sup\left\{ k \geq 0: (f'_i)^k(b) \neq 0\right\}.
\ee

\begin{definition}\label{sqgln-def}
The set $\cB$ is a \defn{$\sqgln$-crystal}
if for all $i \in[n-1]$
and $b,c \in \cB$ we have both:
\ben 
\item[(a)] $\varepsilon'_i(b) + \varphi'_i(b) \in 2\NN$ is even with 
$ \frac{\varphi'_i(b)  - \varepsilon'_i(b)}{2} = \weight(b)_i-\weight(b)_{i+1},$ and 
\item[(b)]   $e'_i(b) = c$ if and only if $b = f'_i(c)$,
in which case
$ \weight(c) - \weight(b) = \begin{cases} \e_i &\text{if $\varepsilon'_i(b)$ is even} \\ -\e_{i+1}&\text{if $\varepsilon'_i(b)$ is odd}.
\end{cases}
$
\een
\end{definition}

The ``queer'' extension of this definition goes as follows.
Suppose $\cB$ is a $\sqgln$-crystal 
with additional maps $e'_{\bar 1},f'_{\bar 1} : \cB \to \cB \sqcup \{0\}$.
Define $\varepsilon'_{ \bar 1}$ and $\varphi'_{ \bar 1} $ 
as in \eqref{var'-eq} with $i={\bar 1}$.

\begin{definition}\label{sqqn-def}
When $n\geq 2$, we define
  $\cB$
to be a \defn{$\sqqn$-crystal} 
if  $e'_{\bar 1}$ and $f'_{\bar 1}$ commute with $e'_i$ and $f'_i$ while preserving $\varepsilon'_i$ and $\varphi'_i$ for all $3\leq i \leq n-1$,
such that for all $b,c \in \cB$ we have both:
\ben

\item[(a)] $\weight(b) \in \NN^n$ and $\varepsilon'_{\bar 1}(b) + \varphi'_{\bar 1}(b) =\begin{cases}
0 &\text{if }\weight(b)_1 = \weight(b)_2 = 0 \\
2 &\text{otherwise};
\end{cases}$ 

\item[(b)]   $e'_{\bar 1}(b)=c $ if and only if $b=f'_{\bar 1}(c) $,
in which case 
$\weight(c) - \weight(b) = \begin{cases} \e_1 &\text{if $\varepsilon'_{\bar 1}(b)=2$} \\ -\e_{2}&\text{if $\varepsilon'_{\bar 1}(b)=1$}.
\end{cases}$
\een
When $n=1$, we define a \defn{$\sqrt{\q_1}$-crystal} to be the same thing 
as a $\sqrt{\gl_1}$-crystal.
\end{definition}

We associate to each $\sqgln$- and $\sqqn$-crystal a \defn{crystal graph} in the usual way,
with arrows $b\xrightarrow{i}c$ whenever $f'_i(b) = c\neq 0$.
Unlike the classical case, these edge-labeled directed graphs are no longer acyclic in general.
Two $\sqgln$- or $\sqqn$-crystal  are \defn{isomorphic} if there is a weight-preserving graph 
isomorphism between their crystal graphs.

\begin{example}\label{SS-ex}
Let $\SS_n$ be the set of nonempty subsets of $[n]$.
For $i \in [n-1]$ and $S \in \SS_n$, define 
\[ \ba e'_i(S) &:= \begin{cases}
S\sqcup \{i\}&\text{if }S\cap \{i,i+1\} = \{i+1\}, \\
S\setminus \{i+1\}&\text{if }S\cap \{i,i+1\} = \{i,i+1\}, \\
0&\text{otherwise},
\end{cases}
\\
f'_i(S) &:= \begin{cases}
S\sqcup \{i+1\}&\text{if }S\cap \{i,i+1\} = \{i\}, \\
S\setminus \{i\}&\text{if }S\cap \{i,i+1\} = \{i,i+1\}, \\
0&\text{otherwise}.
\end{cases}
\ea
\]
Also  let $\weight(S) = \sum_{i \in S} \e_i \in \NN^n$. 
Relative to these maps, the set $\SS_n$ is a $\sqgln$--crystal,
which we refer to as the \defn{standard crystal}.
Its crystal graph for some small values of $n$ is shown in Figure~\ref{SS-fig}.
When $n\geq 2$, define $e'_{\bar 1}(S)  = e'_1(S)$ and $f'_{\bar 1}(S) = f'_1(S)$.
Then $\SS_n$ is also a $\sqqn$-crystal.
 
\end{example}

\begin{figure}
\centerline{\ytableausetup{boxsize = .35cm,aligntableaux=center}

\begin{tikzpicture}[>=latex,line join=bevel,]
  \pgfsetlinewidth{1bp}
\tiny%
\pgfsetcolor{black}
  \pgfsetcolor{red}
  \draw [->,solid] (68.256bp,23.426bp) .. controls (65.678bp,27.073bp) and (64.834bp,34.702bp)  .. (68.779bp,51.014bp);
  \pgfsetcolor{blue}
  \draw [->,solid] (17.811bp,8.8026bp) .. controls (20.735bp,9.7641bp) and (24.897bp,11.133bp)  .. (38.593bp,15.636bp);
  \pgfsetcolor{teal}
  \draw [->,solid] (75.934bp,50.918bp) .. controls (78.512bp,47.27bp) and (79.356bp,39.641bp)  .. (75.411bp,23.33bp);
  \pgfsetcolor{red}
  \draw [->,solid] (76.191bp,53.826bp) .. controls (78.707bp,54.325bp) and (82.213bp,55.021bp)  .. (95.86bp,57.729bp);
  \draw [->,solid] (29.116bp,71.813bp) .. controls (26.09bp,69.126bp) and (21.579bp,66.655bp)  .. (7.2594bp,63.655bp);
  \pgfsetcolor{teal}
  \draw [->,solid] (34.544bp,71.559bp) .. controls (35.026bp,69.795bp) and (35.556bp,67.853bp)  .. (38.741bp,56.2bp);
  \pgfsetcolor{blue}
  \draw [->,solid] (36.109bp,79.95bp) .. controls (39.694bp,80.554bp) and (45.365bp,81.511bp)  .. (60.924bp,84.134bp);
  \draw [->,solid] (10.976bp,128.6bp) .. controls (11.656bp,127.75bp) and (12.404bp,126.81bp)  .. (19.523bp,117.9bp);
  \draw [->,solid] (6.8052bp,73.609bp) .. controls (9.8309bp,76.296bp) and (14.342bp,78.767bp)  .. (28.662bp,81.767bp);
  \pgfsetcolor{teal}
  \draw [->,solid] (5.5144bp,58.544bp) .. controls (5.8239bp,57.384bp) and (6.154bp,56.147bp)  .. (9.0859bp,45.158bp);
  \pgfsetcolor{blue}
  \draw [->,solid] (44.706bp,49.096bp) .. controls (48.19bp,49.597bp) and (53.525bp,50.363bp)  .. (68.902bp,52.572bp);
  \pgfsetcolor{teal}
  \draw [->,solid] (44.488bp,46.631bp) .. controls (47.252bp,43.245bp) and (48.611bp,35.938bp)  .. (46.052bp,19.522bp);
  \pgfsetcolor{red}
  \draw [->,solid] (37.039bp,41.277bp) .. controls (34.657bp,39.434bp) and (31.46bp,37.769bp)  .. (18.346bp,34.383bp);
  \pgfsetcolor{teal}
  \draw [->,solid] (66.469bp,77.082bp) .. controls (66.97bp,75.059bp) and (67.532bp,72.791bp)  .. (70.535bp,60.673bp);
  \pgfsetcolor{red}
  \draw [->,solid] (68.377bp,85.28bp) .. controls (70.917bp,85.634bp) and (74.456bp,86.127bp)  .. (88.23bp,88.047bp);
  \pgfsetcolor{blue}
  \draw [->,solid] (45.948bp,17.407bp) .. controls (49.038bp,17.869bp) and (53.688bp,18.564bp)  .. (68.134bp,20.723bp);
  \pgfsetcolor{red}
  \draw [->,solid] (38.685bp,18.777bp) .. controls (35.921bp,22.163bp) and (34.562bp,29.469bp)  .. (37.121bp,45.886bp);
  \draw [->,solid] (58.3bp,107.37bp) .. controls (58.74bp,105.78bp) and (59.221bp,104.04bp)  .. (62.483bp,92.275bp);
  \pgfsetcolor{teal}
  \draw [->,solid] (11.844bp,29.824bp) .. controls (11.968bp,28.441bp) and (12.102bp,26.946bp)  .. (13.144bp,15.345bp);
  \pgfsetcolor{blue}
  \draw [->,solid] (15.471bp,45.003bp) .. controls (18.488bp,47.203bp) and (22.631bp,49.116bp)  .. (37.026bp,51.285bp);
  \pgfsetcolor{teal}
  \draw [->,solid] (97.539bp,81.534bp) .. controls (97.889bp,80.151bp) and (98.267bp,78.654bp)  .. (101.21bp,67.038bp);
  \draw [->,solid] (110.41bp,57.449bp) .. controls (112.81bp,56.872bp) and (115.57bp,56.209bp)  .. (128.24bp,53.164bp);
  \pgfsetcolor{red}
  \draw [->,solid] (27.477bp,102.28bp) .. controls (27.824bp,100.67bp) and (28.203bp,98.911bp)  .. (30.776bp,86.967bp);
  \pgfsetcolor{blue}
  \draw [->,solid] (32.853bp,111.18bp) .. controls (34.77bp,111.5bp) and (36.933bp,111.85bp)  .. (49.177bp,113.85bp);
\begin{scope}
  \definecolor{strokecol}{rgb}{0.0,0.0,0.0};
  \pgfsetstrokecolor{strokecol}
  \draw (71.772bp,21.266bp) node {$   {\begin{ytableau}*(white) 24\end{ytableau}}$};
\end{scope}
\begin{scope}
  \definecolor{strokecol}{rgb}{0.0,0.0,0.0};
  \pgfsetstrokecolor{strokecol}
  \draw (72.418bp,53.077bp) node {$   {\begin{ytableau}*(white) \barr{l}23 \\[-2pt] 4 \earr\end{ytableau}}$};
\end{scope}
\begin{scope}
  \definecolor{strokecol}{rgb}{0.0,0.0,0.0};
  \pgfsetstrokecolor{strokecol}
  \draw (13.849bp,7.5bp) node {$   {\begin{ytableau}*(white) 14\end{ytableau}}$};
\end{scope}
\begin{scope}
  \definecolor{strokecol}{rgb}{0.0,0.0,0.0};
  \pgfsetstrokecolor{strokecol}
  \draw (42.339bp,16.868bp) node {$   {\begin{ytableau}*(white) \barr{l}12 \\[-2pt] 4 \earr\end{ytableau}}$};
\end{scope}
\begin{scope}
  \definecolor{strokecol}{rgb}{0.0,0.0,0.0};
  \pgfsetstrokecolor{strokecol}
  \draw (103.19bp,59.183bp) node {$   {\begin{ytableau}*(white) 34\end{ytableau}}$};
\end{scope}
\begin{scope}
  \definecolor{strokecol}{rgb}{0.0,0.0,0.0};
  \pgfsetstrokecolor{strokecol}
  \draw (32.421bp,79.328bp) node {$   {\begin{ytableau}*(white) \barr{l}12 \\[-2pt] 3 \earr\end{ytableau}}$};
\end{scope}
\begin{scope}
  \definecolor{strokecol}{rgb}{0.0,0.0,0.0};
  \pgfsetstrokecolor{strokecol}
  \draw (3.5bp,66.094bp) node {$   {\begin{ytableau}*(white) 13\end{ytableau}}$};
\end{scope}
\begin{scope}
  \definecolor{strokecol}{rgb}{0.0,0.0,0.0};
  \pgfsetstrokecolor{strokecol}
  \draw (40.834bp,48.54bp) node {$   {\begin{ytableau}*(white) \barr{l}12 \\[-2pt] 34 \earr\end{ytableau}}$};
\end{scope}
\begin{scope}
  \definecolor{strokecol}{rgb}{0.0,0.0,0.0};
  \pgfsetstrokecolor{strokecol}
  \draw (64.569bp,84.749bp) node {$   {\begin{ytableau}*(white) 23\end{ytableau}}$};
\end{scope}
\begin{scope}
  \definecolor{strokecol}{rgb}{0.0,0.0,0.0};
  \pgfsetstrokecolor{strokecol}
  \draw (131.8bp,52.309bp) node {$   {\begin{ytableau}*(white) 4\end{ytableau}}$};
\end{scope}
\begin{scope}
  \definecolor{strokecol}{rgb}{0.0,0.0,0.0};
  \pgfsetstrokecolor{strokecol}
  \draw (7.4331bp,133.04bp) node {$   {\begin{ytableau}*(white) 1\end{ytableau}}$};
\end{scope}
\begin{scope}
  \definecolor{strokecol}{rgb}{0.0,0.0,0.0};
  \pgfsetstrokecolor{strokecol}
  \draw (25.808bp,110.03bp) node {$   {\begin{ytableau}*(white) 12\end{ytableau}}$};
\end{scope}
\begin{scope}
  \definecolor{strokecol}{rgb}{0.0,0.0,0.0};
  \pgfsetstrokecolor{strokecol}
  \draw (11.167bp,37.358bp) node {$   {\begin{ytableau}*(white) \barr{l}13 \\[-2pt] 4 \earr\end{ytableau}}$};
\end{scope}
\begin{scope}
  \definecolor{strokecol}{rgb}{0.0,0.0,0.0};
  \pgfsetstrokecolor{strokecol}
  \draw (95.63bp,89.078bp) node {$   {\begin{ytableau}*(white) 3\end{ytableau}}$};
\end{scope}
\begin{scope}
  \definecolor{strokecol}{rgb}{0.0,0.0,0.0};
  \pgfsetstrokecolor{strokecol}
  \draw (56.184bp,115.0bp) node {$   {\begin{ytableau}*(white) 2\end{ytableau}}$};
\end{scope}
\end{tikzpicture}\input{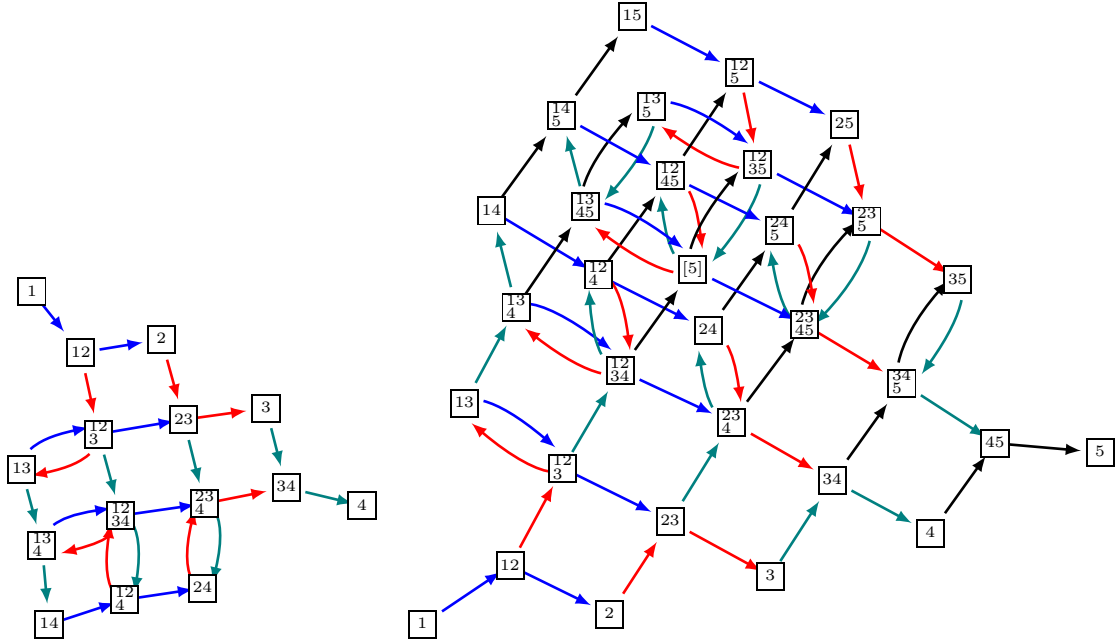}}
\caption{Crystal graphs of the standard $\sqrt{\gl_4}$ and $\sqrt{\gl_5}$-crystals.
We have omitted the edge labels, but have drawn all 
 arrows with a given label using the same color. The omitted edge labels
can be inferred by comparing the weights of each arrow's
source and target.
}
\label{SS-fig}
\end{figure}

We define the \defn{character} of a finite $\sqgln$- or $\sqqn$-crystal in the same way as for ordinary crystals.
The following result is immediate from the definitions, and motivates our terminology:

\begin{proposition}\label{sq-prop}
Any $\sqgln$-crystal (respectively, $\sqqn$-crystal)
is a $\gl_n$-crystal  (respectively, $\q_n$-crystal)
relative to the operators $(e'_i)^2$ and $(f'_i)^2$,
setting $e'_i(0) =f'_i(0)=0$.
This object is seminormal as a $\gl_n$-crystal
and so, when finite, its character is a symmetric polynomial.
\end{proposition}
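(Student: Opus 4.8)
The plan is to reduce everything to a local analysis of the individual $i$-strings generated by the unsquared operators $e'_i$ and $f'_i$. Fix $i \in \{\bar 1, 1, \ldots, n-1\}$ and an element $b$. Part (b) of Definition~\ref{sqgln-def} (respectively Definition~\ref{sqqn-def}) says that $e'_i$ and $f'_i$ are mutually inverse partial bijections, so the connected component of $b$ under these two maps is a single finite chain
\[
b_0 \xrightarrow{\,f'_i\,} b_1 \xrightarrow{\,f'_i\,} \cdots \xrightarrow{\,f'_i\,} b_L, \qquad e'_i(b_0)=f'_i(b_L)=0,
\]
on which $\varepsilon'_i(b_j)=j$ and $\varphi'_i(b_j)=L-j$. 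Part (a) forces $L=\varepsilon'_i(b_j)+\varphi'_i(b_j)$ to be even, say $L=2M$; for $i=\bar 1$ the same axiom forces the stronger $L\in\{0,2\}$.

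First I would record the weight increments along this chain. By part (b), the step $e'_i(b_j)=b_{j-1}$ changes the weight by $+\e_i$ or $-\e_{i+1}$ according to the parity of $\varepsilon'_i(b_j)=j$. Reading this downward, a consecutive pair of steps $b_{j}\to b_{j+1}\to b_{j+2}$ always contributes the same net change $\e_{i+1}-\e_i$, independent of the parity of $j$, since exactly one of the two steps is of each type. Consequently the squared operators $e_i:=(e'_i)^2$ and $f_i:=(f'_i)^2$ send $b_j\mapsto b_{j-2}$ and $b_j\mapsto b_{j+2}$ (and to $0$ at the ends of the chain), are mutually inverse where nonzero, and satisfy $\weight(b_{j-2})=\weight(b_j)+\e_i-\e_{i+1}$. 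This is exactly the defining condition of a $\gl_n$-crystal in Definition~\ref{crystal-def}, and in the $\bar 1$ case — where only the length-two chains contribute and $b_2\mapsto b_0$ — it yields the pairing and the weight change $\e_1-\e_2$ required for part (b) of Definition~\ref{q-crystal-def}.

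Next I would verify seminormality and the remaining $\q_n$ axioms. The squared string lengths are $\varepsilon_i(b_j)=\lfloor j/2\rfloor=\lfloor \varepsilon'_i(b_j)/2\rfloor$ and $\varphi_i(b_j)=\lfloor(L-j)/2\rfloor=\lfloor\varphi'_i(b_j)/2\rfloor$, both finite; since $\lceil j/2\rceil+\lfloor j/2\rfloor=j$, this gives $\varphi_i(b_j)-\varepsilon_i(b_j)=M-j$, which matches $\weight(b_j)_i-\weight(b_j)_{i+1}$ computed from part (a). That is seminormality as a $\gl_n$-crystal. For part (a) of Definition~\ref{q-crystal-def}, commutation of $e'_{\bar1},f'_{\bar1}$ with $e'_i,f'_i$ for $3\le i\le n-1$ passes to the squares because $(e'_{\bar1})^2$ and $(e'_i)^2$ are iterated compositions of commuting maps; and since $\varepsilon_i$ and $\varphi_i$ are functions of $\varepsilon'_i$ and $\varphi'_i$ alone, preservation of the latter by $e'_{\bar1},f'_{\bar1}$ immediately forces preservation of the former by $e_{\bar1},f_{\bar1}$.

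The computations are elementary, so there is no serious obstacle; the one place that demands care is the parity bookkeeping of the weight increments. The real content of the proposition is precisely that the evenness of $\varepsilon'_i+\varphi'_i$ from axiom (a), together with the parity-alternating weight rule in axiom (b), conspire so that two successive $f'_i$-steps always realize a single ordinary crystal lowering operator. I would therefore state this parity cancellation once, cleanly, rather than splitting into separate even and odd cases, and then let the $\gl_n$- and $\q_n$-crystal axioms follow by direct inspection of the chain.
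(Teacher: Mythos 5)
Your argument is correct: the reduction to individual $i$-strings (finite by axiom (a)), the parity cancellation showing that two consecutive $f'_i$-steps always change the weight by $\e_{i+1}-\e_i$, and the floor-function computation $\varepsilon_i = \lfloor \varepsilon'_i/2\rfloor$, $\varphi_i = \lfloor \varphi'_i/2\rfloor$ together verify the $\gl_n$-crystal axioms, seminormality, and the queer pairing, weight, commutation, and preservation conditions. There is nothing in the paper to compare against: the proposition is prefaced by ``The following is immediate from the definitions'' and no proof is given, so your write-up is exactly the routine unfolding that the authors leave implicit, with the parity bookkeeping being the one point of genuine (if small) content.
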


The ``square'' of a $\sqqn$-crystal 
is not necessarily seminormal as a $\q_n$-crystal since one can have $e_{\bar1}(b) = f_{\bar1}(b)=0$
without having $\weight(b)_1 = \weight(b)_2=0$.
The $\gl_n$- and $\q_n$-crystals derived from $\SS_n$ via Proposition~\ref{sq-prop}
are the standard crystals $\BB_n$ from Examples~\ref{st-ex1} and \ref{st-ex2}.

\subsection{Square root crystals of words}\label{sq-words-sect}

Given $m \in \NN$, let 
$\SVWords_n(m)$ be the set of $m$-element sequences $S=(S_1,S_2,\dots,S_m)$ 
where each $\varnothing \subsetneq S_i \subseteq [n]$, or equivalently $S_i \in \SS_n$.
We refer to elements of $\SVWords_n(m)$ as \defn{set-valued words}.

In \cite{Yu}, Yu identifies a remarkable $\sqgln$-crystal on $\SVWords_n(m)$.
We review this structure below, and indicate
how it can be extended to a $\sqqn$-crystal.
In the next section, we show how this construction leads to 
our second crystal structure on set-valued decomposition tableaux.

For the rest of this part, fix a set-valued word
$S=(S_1,S_2,\dots,S_m)$.
All of the following definitions were introduced in \cite{Yu} in a slightly more restricted context.

\begin{definition} \label{iword-def}
For each $i \in   \PP$,
the \defn{$i$-word} of $S$
is the following word composed of
``$($'', ``$)$'', and ``$-$''
characters concatenated together.
Read through the set-valued entries of $S$ from left to right.
For each entry containing
$i$ but not $i+1$,
we write the single character ``$)$''.
For each entry containing
$i+1$ but not $i$,
we write the single character ``$($''.
Finally, for each entry containing both
$i$ and $i + 1$,
we write the three characters ``$)-($''. 
\end{definition}

The only difference between this definition and \cite[Def.~4.1]{Yu}
is that Yu requires $S$ to be (the column reading word of) a semistandard set-valued tableau, rather than any set-valued word.

\begin{example}
We represent the set-valued word $S=(S_1,S_2,\dots,S_m)$
as the one-column  tableau 
\[S = \ytabc{.6cm}{ S_1 \\ S_2 \\ \vdots \\ S_m},\]
with commas omitted in each entry.
Using this convention, the set-valued word
\[
S = 
\ytabc{.5cm}{
23 \\ 2 \\ 23 \\ 34 \\ 1 \\ 2 \\ 1
}  \in \SVWords_4(7)
\]
has 1-word ``$((()()$'',
2-word ``$)-())-(()$'',
and  3-word ``$)))-($''.
\end{example}

Yu \cite{Yu} divides the characters in the $i$-word into equivalence classes in the following way.
Ignore the ``$-$'' symbols and pair 
the parentheses ``$($'' with ``$)$'' in the usual way. 
Then we require:
\begin{itemize}
\item If a left parenthesis ``$($'' is paired with a right paranthesis ``$)$'',
then these two characters
and everything between them
is in the same equivalence class.

\item For each ``$)-($'', 
these three characters are in the 
same equivalence class.

\end{itemize}
Each of the resulting 
equivalence classes is a contiguous 
sub-word.

\begin{example}
\label{E: forms}
If the $i$-word is
``$))-(())-())-(()-($''
then its distinct equivalence classes are
``${)} $''
and 
``${)-(())-()} $''
and
``${)-(}$''
and
``${()-(}$''.
\end{example}

Observe that 
any unpaired ``$)$'' in the $i$-word
must be the first character in its class,
and any unpaired ``$($'' must be the 
last character in its class. 
As in \cite{Yu}, we distinguish
between classes in the $i$-word using the terminology below:
\begin{enumerate}
\item[(a)] A class is a \defn{null form}
it has no unpaired ``$($'' or ``$)$'' characters.
For example: ``$(()-())-()$''. 

\item[(b)] A class is a \defn{left form}
if has no unpaired ``$)$'' characters but
ends with an unpaired ``$($''. 
\newline
This class is either ``$($'' or ``$(u)-($''
for some word $u$.
For example: ``$(())-($''. 

\item[(c)] A  class is a \defn{right form} if it
has no unpaired ``$($'' characters but
starts with an unpaired ``$)$''.
This class is either ``$)$'' or ``$)-(u)$''
for some word $u$.
For example: ``$)-()-()$''. 

\item[(d)] A class is a \defn{combined form} if it
starts with an unpaired ``$)$''   and
ends with an unpaired ``$($''.
This class is either ``$)-($'' or ``$)-(u)-($''
for some word $u$.
For example: ``$)-()-(())-($''. 
\end{enumerate}

If we ignore the null forms,
then in any $i$-word we have zero or more right forms,
followed by at most one 
combined form, followed by
zero or more left forms.

We may now define $\sqqn$-crystal operators on $\SVWords_n(m)$.
The operators indexed by $i \in [n-1]$ are identical to ones considered in \cite[\S4.1]{Yu},
with the same caveats as after Definition~\ref{iword-def}.
Recall that we let $S = (S_1,S_2,\dots,S_m)$ be an arbitrary element of $\SVWords_n(m)$.

\begin{definition}
For $i \in [n-1]$, construct $e'_i(S) \in\SVWords_n(m)\sqcup \{0\}$  as follows:
\begin{itemize}
\item If the $i$-word of $S$ has a combined form, 
then we find the entry in $S$ that corresponds to 
``$)-($'' at the end of this combined form.
We remove $i+1$ from this entry to obtain $e_i'(S)$.
\item  Otherwise, if the $i$-word of $S$ has no left forms,
then we set $e_i'(S) = 0$.
\item Otherwise, find the first left form
in the $i$-word of $S$, and then 
find the entry in $S$ that corresponds to
``$($'' at the start of this left form.
Add $i$ to this entry to obtain $e_i'(S)$.
\end{itemize}
\end{definition}

\begin{definition}
For $i \in [n-1]$, construct $f'_i(S) \in\SVWords_n(m)\sqcup \{0\}$  as follows:
\begin{itemize}
\item If the $i$-word of $S$ has a combined form, 
then we find the entry in $S$ that corresponds to 
``$)-($'' at the beginning of this combined form.
We remove $i$ from this entry to obtain~$f_i'(S)$.
\item 
Otherwise, if the $i$-word of $S$ has no right forms,  
then we set $f_i'(S) = 0$.
\item Otherwise, find the last right form
in the $i$-word of $S$, and then
find the entry in $S$ that corresponds to
``$)$'' at the end of this right form.
Add $i+1$ to this entry to obtain $f_i'(S)$.
\end{itemize}
\end{definition}
 
\begin{example}
The $f_2'$ operator has this effect on these  set-valued words:
\[
\ytabc{.5cm}{
23 \\
2 \\
2 \\
34 \\ 
1 \\
2 \\
1}
\ \xrightarrow{f'_2}\ 
\ytabc{.5cm}{
23 \\
2 \\
23 \\
34 \\
1 \\
2 \\
1}
\ \xrightarrow{f'_2}\ 
\ytabc{.5cm}{
23 \\
2 \\
3 \\
34 \\
1 \\
2 \\
1}
\ \xrightarrow{f'_2}\ 
\ytabc{.5cm}{
23 \\
23 \\
3 \\
34 \\
1 \\
2 \\
1}
\ \xrightarrow{f'_2}\ 
\ytabc{.5cm}{
3 \\
23 \\
3 \\
34 \\
1 \\
2 \\
1} \ \xrightarrow{f'_2}\ 0.
\]
\end{example}

We include one lemma before proceeding.
Given $S = (S_1,S_2,\dots,S_m) \in \SVWords_n(m)$,
let $w(S)$ be the word formed by replacing each $S_i$ by its entries listed in increasing order.
For example, if $S=(\{4,5\},\{3\},\{3,6\},\{1,2\},\{2\},\{2,3\})$ then $w(S) =4533612223$.
Following \cite{Buch2002}, we say that a word $w_1w_2\cdots w_l$ is a \defn{reverse lattice word}
if $\weight(w_iw_{i+1}w_{i+2}\cdots w_l)$ is a partition for each $i \in[l]$.

\begin{lemma}\label{sv-highest-lem}
A set-valued word $S \in \SVWords_n(m)$ has $e'_i(S) = 0$ for all $i \in [n-1]$
if and only if $w(S)$ is a reverse lattice word.
\end{lemma}

\begin{proof}
We have $e'_i(S) \neq 0$ if and only if there is at least one unpaired ``$($'' character in the $i$-word of $S$.
This occurs if and only if there is at least one $i$-unpaired letter (in the sense of Remark~\ref{signa-rem}) equal to $i+1$ in $w(S)$.

Write $w(S) = w_1w_2\cdots w_l$.
For this word to have an $i$-unpaired letter $w_j=i+1$, 
 we must have $\weight(w_{j+1}w_{j+2}\cdots w_l)_i \leq \weight(w_{j+1}w_{j+2}\cdots w_l)_{i+1}$,
but then
$\weight(w_{j}w_{j+1}\cdots w_l)_i < \weight(w_{j}w_{j+1}\cdots w_l)_{i+1}$
so $\weight(w_jw_{j+1}w_{j+2}\cdots w_l)$ is not a partition.
Thus, if $w(S)$ is a reverse lattice word then we must have $e'_i(S) = 0$ for all $i \in [n-1]$.

Conversely, if $w(S)$ is not a reverse lattice word, then for some $i \in [n-1]$,
there must exist $j \in [l]$
with $\weight(w_{j}w_{j+1}\cdots w_l)_i < \weight(w_{j}w_{j+1}\cdots w_l)_{i+1}$.
If $j$ is chosen to be maximal after fixing $i$, then  $w_j = i+1$
(as otherwise the strict inequality would still hold after incrementing $j$ by one),
and this letter is $i$-unpaired in $w(S)$ (since if it were paired with $w_k =i$
for some $k>j$
then the strict inequality would still hold after replacing $j$ by $k+1$).
Thus, if $w(S)$ is not a reverse lattice word, then it has an $i$-unpaired letter equal to $i+1$
for some $i \in [n-1]$, so we have $e'_i(S) \neq 0$.
\end{proof}

The following more straightforward operators are new. Assume $n \geq 2$ and $S \in \SVWords_n(m)$.

\begin{definition}\label{e'1-def}
Construct $e'_{\bar 1}(S) \in \SVWords_n(m)\sqcup \{0\}$  as follows.
If there are no entries in $S$ containing $1$ or $2$, then $e'_{\bar 1}(S) = 0$.
Otherwise, suppose $i \in [m]$ is minimal with $\{1,2\}\cap S_i \neq \varnothing$.
\begin{itemize}
\item  If $1 \notin S_{i}$ and $2\in S_{i}$ then add $1$ to $S_{i}$ to obtain 
$e'_{\bar 1}(S)$. 

\item If $1 \in S_{i}$ and $2 \in S_{i}$ then remove $2$ from $S_{i}$ to obtain 
$e'_{\bar 1}(S)$. 

\item If $1 \in S_{i}$ and $2 \notin S_{i}$ then $e'_{\bar 1}(S) = 0$.

\end{itemize}
\end{definition}

\begin{definition}\label{f'1-def}
Construct $f'_{\bar 1}(S) \in \SVWords_n(m)\sqcup \{0\}$  as follows.
If there are no entries in $S$ containing $1$ or $2$, then $f'_{\bar 1}(S) = 0$.
Otherwise,  suppose $i \in [m]$ is minimal with $\{1,2\}\cap S_i \neq \varnothing$.
\begin{itemize}
\item  If $1 \in S_{i}$ and $2\notin S_{i}$ then add $2$ to $S_{i}$ to obtain 
$f'_{\bar 1}(S)$. 

\item If $1 \in S_{i}$ and $2 \in S_{i}$ then remove $1$ from $S_{i}$ to obtain 
$f'_{\bar 1}(S)$. 

\item If $1 \notin S_{i}$ and $2 \in S_{i}$ then $f'_{\bar 1}(S) = 0$.

\end{itemize}
\end{definition}

\begin{example}
The $f_{\bar 1}'$ operator has this effect on these set-valued words:
\[
\ytabc{.6cm}{
3 \\
13 \\
3 \\
34
}
\ \xrightarrow{f_{\bar 1}'}\ 
\ytabc{.6cm}{
3 \\
123 \\
3 \\
34
}
\ \xrightarrow{f_{\bar 1}'}\ 
\ytabc{.6cm}{
3 \\
23 \\
3 \\
34
} \ \xrightarrow{f_{\bar 1}'}\ 0.
\]
\end{example}

\begin{theorem}\label{sqqn-thm}
For the operators $e'_{i}$ and $f'_{i}$ given above,  $\SVWords_n(m)$ is a $\sqqn$-crystal.
\end{theorem}

The claim that $\SVWords_n(m)$ is a $\sqgln$-crystal essentially follows from \cite[\S4]{Yu};
 the results there technically only consider set-valued words that arise as column reading words of semistandard set-valued tableaux,
 but this property is not used in the relevant proofs. Once 
we know that the set
$\SVWords_n(m)$ is a $\sqgln$-crystal, very little needs to be checked to deduce
  that it is also a $\sqqn$-crystal. We will give a self-contained alternate proof of Theorem~\ref{sqqn-thm} in Section~\ref{sqrt-crystal-tensor-sect}.

\begin{remark} \label{squaring-remk}
Observe that $\SVWords_n(1) \cong \SS_n$ as $\sqqn$-crystals if $\SS_n$ is defined as in Example~\ref{SS-ex}.
Also notice that the $\gl_n$- or $\q_n$-crystal afforded by ``squaring'' $\SVWords_n(m)$ 
in the sense of Proposition~\ref{sq-prop} contains the normal object $\Words_n(m)$ as a union of full subcrystals,
once we identify each ordinary word $w_1w_2\cdots w_m$ with the set-valued word $(\{w_1\},\{w_2\},\dots,\{w_m\})$. Most other full subcrystals of this crystal are not normal as either $\gl_n$-crystals or $\q_n$-crystals.
\end{remark}

\subsection{Square root crystals of tableaux}

Suppose $T$ is a (shifted or unshifted) set-valued tableau
with $m$ boxes and all entries contained in $[n]$.
Define the \defn{set-valued row reading word} of $T$ to be the sequence $\svrow(T) \in \SVWords_n(m)$ formed by listing the set-valued entries of $T$ in the usual row reading order.
Define $\svrevrow(T) \in \SVWords_n(m)$ to be the same sequence read in the opposite order.
Finally, form the \defn{set-valued column reading word}  $\svcol(T) \in \SVWords_n(m)$ by
reading the entries of $T$  down each column in French notation, iterating over columns from left to right.
For example, we have
\[\ba
\svrow\(
\ytabc{.4cm}{
45 \\
3 & 36 \\
12 & 2 & 23
}\) &= (\{4,5\},\{3\},\{3,6\},\{1,2\},\{2\},\{2,3\}),
\\
\svrevrow\(\ytabc{.4cm}{
 \none & \none & 1 \\
 \none & 2 & 1 \\
 34 & 2 & 2 & 23 
}  \) &= (\{2,3\}, \{2\},\{ 2\}, \{3,4\}, \{1\}, \{2\}, \{1\}),
\\
\svcol\(
\ytabc{.4cm}{
45 \\
3 & 36 \\
12 & 2 & 23
}\) &= (\{4,5\},\{3\},\{1,2\},\{3,6\},\{2\},\{2,3\}),
\\
\ea
\]
An \defn{embedding} of $\sqgln$- or $\sqqn$-crystals 
is defined in the same way as for ordinary crystals (see Example~\ref{revrow-thm}):
this means 
a weight-preserving injective map $\phi : \cB \to \cC$  
that commutes with all crystal operators, in the sense that $\phi(e'_i(b)) = e'_i(\phi(b))$ and $\phi(f'_i(b)) = f'_i(\phi(b))$
for all $b \in \cB$ and all relevant indices $i$ when we set $\phi(0) =e'_i(0) = f'_i(0)=0$.

Recall that $ \SetTab_n(\lambda) $ is the set of   semistandard set-valued tableaux of shape $\lambda$
whose entries are all subsets of $[n]$.
The following is equivalent to \cite[Lems.~4.8 and 4.9]{Yu}. 
An example of the resulting crystal
is shown in \cite[Fig.~4.1]{Yu}.

\begin{theorem}[\cite{Yu}]
\label{sqgln-thm}
Let $\lambda$ be a partition.
Then there is a unique $\sqgln$-crystal structure on $\SetTab_n(\lambda)$ for which
$\svcol : \SetTab_n(\lambda) \to \SVWords_n(m)$ is a $\sqgln$-crystal embedding.
\end{theorem}

Our main  new result   is a decomposition tableau analogue of the previous theorem:

\begin{theorem} \label{sqqn-thm2}
Let $\lambda$ be a strict partition.
Then there is a unique $\sqqn$-crystal structure on $\SetDTab_n(\lambda)$ for which
$\svrevrow : \SetDTab_n(\lambda) \to \SVWords_n(m)$ is a $\sqqn$-crystal embedding.
\end{theorem}
 
 The hardest parts of the proof of this theorem will turn out to follow directly from the technical lemmas in Section~\ref{sv-proof-sect}.
 Before explaining this argument, we discuss some examples.

 \begin{example}\label{sqqn-thm-ex}
 Write $e_i'$ and $f_i'$ for the unique maps $\SetDTab_n(\lambda) \to \SetDTab_n(\lambda)\sqcup \{0\}$
 with $\svrevrow\circ e'_i = e'_i \circ \svrevrow$  and $\svrevrow\circ f'_i = f'_i \circ \svrevrow$,
 which exist by Theorem~\ref{sqqn-thm2}.
Then $f_2'$  has this effect on these  set-valued decomposition tableaux:
\[
\ytabc{.4cm}{
 \none & \none & 1 \\
 \none & 2 & 1 \\
 34 & 2 & 2 & 23 
}  
\xrightarrow{f'_2}
\ytabc{.4cm}{
 \none & \none & 1 \\
 \none & 2 & 1 \\
 34 & 23 & 2 & 23 
}  
\xrightarrow{f'_2}
\ytabc{.4cm}{
 \none & \none & 1 \\
 \none & 2 & 1 \\
 34 & 3 & 2 & 23 
}  
\xrightarrow{f'_2}
\ytabc{.4cm}{
 \none & \none & 1 \\
 \none & 2 & 1 \\
 34 & 3 & 23 & 23 
}  
\xrightarrow{f'_2}
\ytabc{.4cm}{
 \none & \none & 1 \\
 \none & 2 & 1 \\
 34 & 3 & 23 & 3 
} \xrightarrow{f'_2}0.
\]
Likewise, the operator $f_{\bar 1}'$ acts as follows:
\[
\ytabc{.6cm}{
 \none & \none & 1 \\
 \none & 2 & 1 \\
 34 & 3 & 13 & 3 
}
\xrightarrow{f_{\bar 1}'}
\ytabc{.6cm}{
 \none & \none & 1 \\
 \none & 2 & 1 \\
 34 & 3 & 123 & 3 
}
\xrightarrow{f_{\bar 1}'}
\ytabc{.6cm}{
 \none & \none & 1 \\
 \none & 2 & 1 \\
 34 & 3 & 23 & 3 
}\xrightarrow{f_{\bar 1}'}0.
\]\end{example}

Figures~\ref{sqrt-q-sv-dec2-fig} and \ref{sqrt-q-sv-dec21-fig} show two $\sqqn$-crystal graphs for $\SetDTab_n(\lambda)$,
while Figures~\ref{q-sv-dec2-fig} and \ref{q-sv-dec21-fig} show the $\q_n$-crystals
 obtained by squaring the crystal operators $e'_i$ and $f'_i$ as in Proposition~\ref{sq-prop}.

 \begin{remark}\label{nsv-rmk}
 In view of Remark~\ref{squaring-remk}, it always holds that the respective subsets of ``non-set-valued'' tableaux in $\SetTab_n(\lambda)$
 and $\SetDTab_n(\lambda) $ (whose entries are all singleton sets)
   are full subcrystals of the $\gl_n$- and $\q_n$-crystals derived from
   Theorems~\ref{sqgln-thm} and \ref{sqqn-thm2} via
    Proposition~\ref{sq-prop}. 
\end{remark}

\begin{figure}
\centerline{\input{sqrt-q-sv-dec2.tex}}
\caption{Crystal graph of the $\sqrt{\q_3}$-crystal $\SetDTab_3(\lambda)$ for $\lambda=(2)$.
Here, 
solid blue and red arrows respectively indicate 
$ \xrightarrow{\ f_1'\ } $ and $ \xrightarrow{\ f_2'\ } $ edges while dashed blue arrows 
indicate $ \xrightarrow{\ f_{\bar 1}'\ }$ edges.
}
\label{sqrt-q-sv-dec2-fig}
\end{figure}

\begin{figure}
\centerline{\input{qsq-sv-dec2.tex}}
\caption{The $\q_3$-crystal 
obtained by squaring the crystal operators in Figure~\ref{sqrt-q-sv-dec2-fig}; compare with Figure~\ref{setdtab-fig1}.
Solid blue and red arrows are
$ \xrightarrow{\ 1\ } $ and $ \xrightarrow{\ 2\ } $ edges. Dashed blue arrows 
are $ \xrightarrow{\ \bar 1\ }$ edges.
Notice that $\DTab_3(\lambda)$ for $\lambda=(2)$ occurs as a connected component of this crystal graph.
}
\label{q-sv-dec2-fig}
\end{figure}

\begin{figure}
\centerline{\input{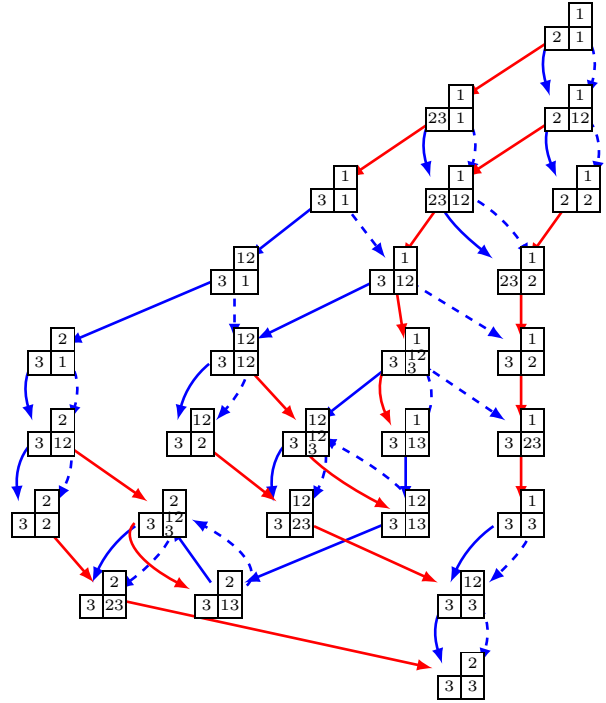}}
\caption{Crystal graph of the $\sqrt{\q_3}$-crystal $\SetDTab_3(\lambda)$ for $\lambda=(2,1)$.
Here, 
solid blue and red arrows respectively indicate 
$ \xrightarrow{\ f_1'\ } $ and $ \xrightarrow{\ f_2'\ } $ edges while dashed blue arrows 
indicate $ \xrightarrow{\ f_{\bar 1}'\ }$ edges.
}
\label{sqrt-q-sv-dec21-fig}
\end{figure}

\begin{figure}
\centerline{\input{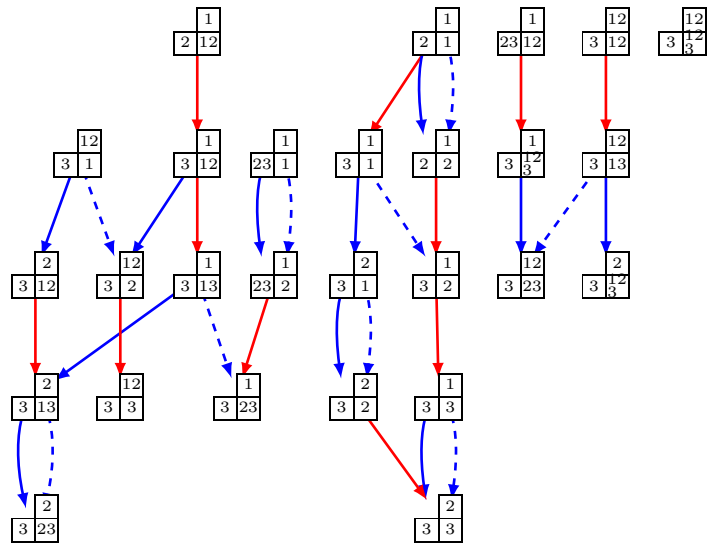}}
\caption{The $\q_3$-crystal obtained by squaring the crystal operators in Figure~\ref{sqrt-q-sv-dec21-fig}; compare with Figure~\ref{setdtab-fig2}.
Solid blue and red arrows are
$ \xrightarrow{\ 1\ } $ and $ \xrightarrow{\ 2\ } $ edges. Dashed blue arrows 
are $ \xrightarrow{\ \bar 1\ }$ edges.
Notice that $\DTab_3(\lambda)$ for $\lambda=(2,1)$ occurs as a connected component of this crystal graph.
}
\label{q-sv-dec21-fig}
\end{figure}

\begin{proof}[Proof of Theorem~\ref{sqqn-thm2}]
Fix $T \in \SetDTab_n(\lambda)$. When $e'_i(\svrevrow(T))$ 
 is nonzero, there is certainly a unique set-valued tableau $U$
of shape $\lambda$ with $\svrevrow(U) = e'_i(\svrevrow(T))$,
although it is not obvious that this is a decomposition tableau.
Set $e'_i(T) := U$ when $e'_i(\svrevrow(T))\neq 0$ and define $e'_i(T) := 0$ otherwise.
Form $f'_i(T)$ analogously. 
The most difficult thing to check is that these
operators $e'_{i}$ and $f'_{i}$ really do give maps $\SetDTab_n(\lambda) \to \SetDTab_n(\lambda)\sqcup\{0\}$.

For this,
first suppose $i \in [n-1]$. Observe that  $e'_i$ or $f'_i$ act on $T$ by either removing 
a letter from some box or adding a letter to some box, when they do not send $T \mapsto 0$.
In the cases when $e'_i$ or $f'_i$ remove a letter from some box,
it is clear $e'_i(T) $ and $f'_i(T)$ belong to $\SetDTab_n(\lambda)$.
On the other hand, if the $i$-word of $\svrevrow(T)$ has no combined forms
and at least one left form, then the number that corresponds to the ``('' at the start of the first left form is also the first $i$-unpaired letter equal to $i+1$ in $\revrow(T)$ in the sense of Lemma~\ref{e-op-SetDTab}.
Changing this $i+1$ to $i$ must yield an element of $ \SetDTab_n(\lambda)$
since otherwise parts (2) and (3) of Lemma~\ref{e-op-SetDTab} would imply that the ``(''  character being considered is not the start of a left form  in the linked $i$-word. Therefore  adding $i$ to the box of $i+1$ also gives an element of 
$ \SetDTab_n(\lambda)$.

Similarly, if the   $i$-word of $\svrevrow(T)$ has no combined forms
and at least one right form, then the number that corresponds to the ``)'' at the start of the last right form is also the last $i$-unpaired letter equal to $i$ in $\revrow(T)$ in the sense of Lemma~\ref{f-op-SetDTab}.
Changing this $i$ to $i+1$ must yield an element of $ \SetDTab_n(\lambda)$
since otherwise parts (2) and (3) of Lemma~\ref{f-op-SetDTab} would imply that the ``)''  character being considered is not the start of a right form  in the   $i$-word. Therefore  adding $i+1$ to the box of $i$ also gives an element of 
$ \SetDTab_n(\lambda)$.

One can either deduce from Theorem~\ref{sqqn-thm}
that $e'_i(T) = U\neq 0$ if and only if $T=f'_i(U)$,
or derive this fact 
repeating the argument in the proof of \cite[Lem.~4.9]{Yu}.
Since $\varepsilon_i'(T)$ is twice the number of left forms in the $i$-word of $\svrevrow(T)$ plus one if there is a combined form,
and $\varphi'_i(T)$ is twice the number of right forms in the $i$-word of $\svrevrow(T)$ plus one if there is a combined form,  the sum $\varepsilon_i'(T) + \varphi_i'(T)$ is even. Moreover,
$\frac{\varphi_i'(T) - \varepsilon_i'(T)}{2}$ is exactly the difference between the number of unpaired letters equal to $i$ and $i+1$ in the   $i$-word of $\svrevrow(T)$, which is just $\weight(T)_i - \weight(T)_{i+1}$.
Finally, the weight identity in part (b) of Definition~\ref{sqgln-def} is immediate.

This shows that $\SetDTab_n(\lambda)$ is a $\sqgln$-crystal.
We move on to the conditions in Definition~\ref{sqqn-def}, assuming $n\geq 2$.
If $e'_{\bar 1}$ or $f'_{\bar 1}$ acts on $T$ by removing a 2 or 1 from some box
then clearly $e'_{\bar 1}(T) $ and $f'_{\bar 1}(T) $ still belong to $\SetDTab(\lambda)$.
Suppose $(i,j)$ is the first box of $T$  in the reverse row reading word 
containing $1$ or $2$. If we have $1\notin T_{ij}$ and $2\in T_{ij}$,
then $e_{\bar 1}$ acts on every distribution of $T$ containing  $1\in T_{ij}$
by changing this 1 to 2; therefore adding $2$ to $T_{ij}$ gives an element of $\SetDTab_n(\lambda)$.
Likewise,  if we have $1 \in T_{ij}$ and $2\notin T_{ij}$, 
then $f_{\bar 1}$ acts on every distribution of $T$ containing  $2\in T_{ij}$
by changing this 2 to 1; therefore adding $1$ to $T_{ij}$ again gives an element of $\SetDTab_n(\lambda)$.
The remaining things to check in Definition~\ref{sqqn-def}
are straightforward. 
 \end{proof}

It follows from \cite[Cor.~5.2]{Yu} that if $\lambda$ is a partition with at most $n$ parts then
the $\sqgln$-crystal $\SetTab_n(\lambda)$ is connected (in the sense that its crystal graph
has only one weakly connected component). More strongly, Yu's result show that $\SetTab_n(\lambda)$ has
 a unique highest weight element (namely, the tableau of shape $\lambda$ with the set
 $\{i\}$ in every entry in row $i$)
 that is sent to zero by all $e'_i$ operators.
We predict that a similar phenomenon holds for the $\sqqn$-crystal $\SetDTab_n(\lambda)$:

\begin{conjecture}\label{sqqn-conj-0}
For each strict partition $\lambda$,
the $\sqqn$-crystal $\SetDTab_n(\lambda)$ is connected.
\end{conjecture}

This is supported by all examples that we can compute.  
These examples also exhibit a more technical highest weight property, which we now explain.

Let $\lambda$ be a strict partition with $\ell(\lambda)=k$ nonzero parts.
The \defn{first border strip} of the shifted diagram $\SD_\lambda$
is the minimal set of positions $S$ such that
\bei
\item[(a)] one has $(1,\lambda_1) \in S$ and  
\item[(b)] if $(i,j) \in S$ and $i\neq j$, then either
$(i+1,j)\in S$, or 
$(i,j-1) \in S$ when $(i+1,j)\notin \SD_\lambda$. 
\eei
Let $\SD_\lambda^{(1)}$ be the first border strip of $\SD_\lambda$.
The set difference $\SD_\lambda- \SD_\lambda^{(1)}$ is either empty when $k=1$
or equal to $\SD_\mu$ for a strict partition $\mu$ with $\ell(\mu) = k-1$.
For $i\in[k-1]$ let $\SD_\lambda^{(i+1)}$ be the first border strip of $\SD_\lambda - (\SD_\lambda^{(1)} \sqcup \cdots \sqcup \SD_\lambda^{(i)})$.
Finally, let $\Thighest_\lambda\in \SetDTab_n(\lambda)$ be the set-valued decomposition tableau of shape $\lambda$ whose entries in each border strip $\SD_\lambda^{(i)}$ are all $\{i\}$.

\begin{example}If $\lambda = (6,4,2,1)$ then
$\Thighest_{\lambda} = \ytab{
\none & \none & \none & 1 \\
\none & \none & 2 & 1 \\ 
\none & 3 & 2 & 1 & 1 \\
4 & 3 & 2 & 2 & 1 & 1
}
$.
\end{example}

Assume $\cB$ is a $\sqqn$-crystal.
An \defn{$i$-string} in $\cB$ is a connected component in the subgraph of its crystal graph retaining only the $\xrightarrow{i}$ arrows. Let $\sigma'_i : \cB \to \cB$ be the involution that reverses each $i$-string. Then define
$e'_{\bar i}: \cB \to \cB \sqcup\{0\}$  
for indices $2\leq i <n$ by
\be
\label{sq-bar-i-eq} 
\ba
e'_{\bar i} &:=  (\sigma'_{i-1}   \sigma'_i)    \cdots  (\sigma'_2 \sigma'_3) (\sigma'_1 \sigma'_2) e'_{\bar 1} (\sigma'_2 \sigma' _1) (\sigma'_3 \sigma'_2) \cdots (\sigma'_i   \sigma'_{i-1}),
\ea
\ee
using the convention that $\sigma'_i(0) = 0$. 
Finally, let us say that an element $b\in\cB$ is \defn{$\sqqn$-highest weight}
if $e'_i (b) =e'_{\bar i}(b)= 0$ for all $i \in [n-1]$.

This construction is motivated by the definition of a \defn{highest weight} element in a $\q_n$-crystal \cite[Def.~1.12]{GJKKK}, which is specified in the same way via the ordinary $\q_n$-crystal operators.
It is known from \cite[Thm.~2.5]{GJKKK} that
 $\Thighest_\lambda$ is the unique highest weight element of the $\q_n$-crystal 
$\DTab_n(\lambda)$ from Example~\ref{revrow-thm}.
This implies the following non-obvious fact:

\begin{proposition}
The tableau  $\Thighest_\lambda$ is a $\sqqn$-highest weight element of $\SetDTab_n(\lambda)$.
\end{proposition}

\begin{proof}
Let $\sigma_i$ be the operator that reverses all $i$-strings in the $\q_n$-crystal $\DTab_n(\lambda)$,
and identify $\DTab_n(\lambda)$ with a subset of $\SetDTab_n(\lambda)$.
Recall from Remark~\ref{nsv-rmk} that the crystal operators $e_i$ and $f_i$ on $\DTab_n(\lambda)$ 
are related to  $e'_i$ and $f'_i$ by the relations
 $e_i = e'^2_i$ and $f_i = f'^2_i$.

Fix $T \in \DTab_n(\lambda)$ and $i \in [n-1]$.
Then each $i$-word of $\svrevrow(T)$ has no combined forms, so if $e'_i(T)$   is nonzero then the $i$-word of
$\svrevrow(e'_i(T))$ does have a combined form, as does the $i$-word of
$\svrevrow(f'_i(T))$ if $f'_i(T)\neq 0$.
It follows that for either operator $\phi \in \{e'_i, f'_i\}$, we have  $\phi(T) \neq 0$ if and only if $\phi^2(T)\neq 0$, in which case $\phi^2(T)$ is also in $\DTab_n(\lambda)$.
This implies that the $i$-string through $T$ in $\SetDTab_n(\lambda)$ 
contains an odd number of vertices, which alternate between elements of 
 $\DTab_n(\lambda)$ and $\SetDTab_n(\lambda)\setminus \DTab_n(\lambda)$, and   we have
$\sigma_i(T) =\sigma'_i(T) \in \DTab_n(\lambda)$.

It is also true for $T \in \DTab_n(\lambda)$ that $e'_{\bar 1}(T) \neq  0$ if and only if $e_{\bar1}(T) = e'^2_{\bar 1}(T) \neq 0$, as both situations occur precisely when the reverse row reading word of $T$ contains a $2$ before any $1$.
We conclude that $T \in \DTab_n(\lambda)$ satisfies $e'_i (T) =e'_{\bar i}(T)= 0$ for all $i \in [n-1]$
if and only if $e_i (T) =e_{\bar i}(T)= 0$ for all $i \in [n-1]$, where $e_{\bar i} :=  (\sigma_{i-1}   \sigma_i)    \cdots  (\sigma_2 \sigma_3) (\sigma_1 \sigma_2) e_{\bar 1} (\sigma_2 \sigma _1) (\sigma_3 \sigma_2) \cdots (\sigma_i   \sigma_{i-1})$.
As already mentioned, results in \cite{GJKKK} imply that $\Thighest_\lambda$ satisfies the second set of conditions.
\end{proof}

The following stronger version of the previous result holds in all of our examples.
 Note that as $\sqqn$-crystal graphs may have cycles, this property does not immediately imply Conjecture~\ref{sqqn-conj-0}.
 
\begin{conjecture}\label{sqqn-conj}
The tableau  $\Thighest_\lambda$ is the unique $\sqqn$-highest weight element of $\SetDTab_n(\lambda)$.
\end{conjecture}

We can verify both conjectures in the following special cases:

\begin{proposition}
Conjectures~\ref{sqqn-conj-0} and \ref{sqqn-conj} hold if
$n\leq 2$ or $\ell(\lambda) \leq 1$.
\end{proposition}

Beyond this, we have checked the two conjectures
by computer when either
$n=3$ and $|\lambda| \leq 20$,
$n=4$ and $|\lambda| \leq 10$,
$n=5$ and $|\lambda| \leq 6$,
$n=6$ and $|\lambda| \leq 5$,
or $n\in\{7,8\}$ and $|\lambda| \leq 3$.

\begin{proof}
We will prove the stronger statement that if $n\leq 2$ or $\ell(\lambda) \leq 1$,
then for each $T \in  \SetDTab_n(\lambda)$ there are indices $i_1,i_2,\dots,i_q \in \{\bar 1,1,2,\dots,n-1\}$
with $e'_{i_1} e'_{i_2} \cdots e'_{i_q}(T) = \Thighest_\lambda$.

If $n=1$ or $\ell(\lambda)=0$ then  $\Thighest_\lambda$ is the unique element
of $ \SetDTab_n(\lambda)$,
so the claim is immediate.
Choose any $T \in \SetDTab_n(\lambda)$. Next assume $n=2$.
In view of Lemma~\ref{decomptab-characterisation}, 
 $T$ has at most two rows and all boxes in its second row must contain the set $\{1\}$.
Suppose there are $k$ boxes in the second row.
Then the first row $T$ read left to right must consist of 
$k$ or more boxes containing $\{2\}$, 
optionally followed by a box containing $\{1,2\}$,
then zero or more boxes containing $\{1\}$, 
optionally followed by a final box containing $\{2\}$ or $\{1,2\}$.

We now explain how to apply a sequence to raising operators to turn $T$ into $\Thighest_\lambda$.
If the last box in the first row of $T$ contains  $\{1,2\}$ (respectively, $\{2\}$)
then we apply $e'_{\bar 1}$ once (respectively, twice) to change this box's entry to $\{1\}$.
Next, if there is a box in the middle of the first row containing $\{1,2\}$,
then the $1$-word of $\svrevrow(T)$ will have a combined form and we can apply $e'_1$
to change the middle box's entry to $\{1\}$.
After applying these operators, the first row of $T$
consists of some number $l $ boxes containing $\{2\}$, with $l\geq k$, followed by zero or more boxes containing $\{1\}$. Applying $(e'_1)^{2(l-k)}$ turns $T$ into $\Thighest_\lambda$ as needed.

Finally suppose $n> 2$ but $\ell(\lambda)=1$, so that $\lambda=(m)$ for 
 some $m \in \PP$. We prove our claim by induction on the set of entries $\cE =\cE(T) := \bigcup_{(i,j) \in T} T_{ij}$ appearing in $T$,
 using graded lexicographic order.
If $\cE = \{1\}$, then $T =\Thighest_\lambda$.
Suppose $\cE \neq \{1\}$ but $i := \min(\cE \setminus\{1\})$ is greater than $2$.
If the number $i$ appears exactly $k$ times in $T$, then 
applying
 $(e'_{i-1})^{2k}$ to $T$ will change all of these $i$ entries to $i-1$, and the set $\cE$ will become $(\cE \setminus\{i\}) \sqcup \{i-1\}$. Then by induction we can find a sequence of raising operators that will turn $(e'_{i-1})^{2k}(T)$ into $\Thighest_\lambda$.
 
Assume that $2$ appears in some entry of $T$. 
Since $T$ is a set-valued decomposition tableaux, the entries 
of $T$ containing $1$ or $2$
 must occur in a contiguous sequence of boxes,
  say in columns $a+1,a+2,\dots,b$ where $0\leq a<b$.
  If we consider just these boxes and remove all numbers greater than 2,
then  we obtain a set-valued decomposition tableau $U\in \SetDTab_2((b-a))$
     that is not equal to $\Thighest_{(b-a)}$. The argument given in the $n=2$ case above
shows that there is a sequence of indices $i_1,i_2,\dots,i_q \in \{\bar 1,1\}$ with $e'_{i_1} e'_{i_2} \cdots e'_{i_q}(U) = \Thighest_{(b-a)}$. Clearly for these indices we also have $\cE(e'_{i_1} e'_{i_2} \cdots e'_{i_q}(T)) = (\cE \setminus\{2\})\cup \{1\}$, and so by induction we can find additional raising operators that will turn $e'_{i_1} e'_{i_2} \cdots e'_{i_q}(T)$ into $\Thighest_\lambda$ as needed.
\end{proof}

Yu \cite{Yu} also demonstrates a close relationship between his $\sqgln$-crystals $\SetTab_n(\lambda)$
and \defn{Lascoux polynomials}. It would be interesting to know if there is a similar relationship between
the $\sqqn$-crystals $\SetDTab_n(\lambda)$ and the \defn{$P$-Lascoux polynomials} introduced in \cite{MarScr}.

\subsection{Tensor products}\label{sqrt-crystal-tensor-sect}

This section investigates a tensor product for 
 $\sqgln$- and $\sqqn$-crystals.
 This can be defined by reusing 
the same tensor product rules  as for $\gl_n$- and $\q_n$-crystals.
The resulting construction is algebraically very natural, although it 
is no longer (directly) motivated by the representation theory of an associated quantum group.

\begin{theorem}\label{sq-tensor-thm}
Suppose $\cB$ and $\cC$ are $\sqgln$-crystals.
Then the set $\cB \otimes \cC = \{ b \otimes c : b\in \cB,\ c\in \cC\}$
 has a unique $\sqgln$-crystal structure
in which
$
\weight(b\otimes c) = \weight(b) + \weight(c)
$
and  
\[\ba 
e'_i(b\otimes c) &= \begin{cases}
b \otimes e'_i(c) &\text{if }\varepsilon'_i(b) \leq \varphi'_i(c), \\
e'_i(b) \otimes c &\text{if }\varepsilon'_i(b) > \varphi'_i(c),
\end{cases}
\\
f'_i(b\otimes c) &= \begin{cases}
b \otimes f'_i(c) &\text{if }\varepsilon'_i(b) < \varphi'_i(c), \\
f'_i(b) \otimes c &\text{if }\varepsilon'_i(b) \geq \varphi'_i(c).
\end{cases}
\ea
\]
If $\cB$ and $\cC$ are $\sqqn$-crystals,
then the $\sqgln$-crystal $\cB\otimes \cC$ has a unique $\sqqn$-crystal structure 
with
\[\ba
  e'_{\overline 1}(b\otimes c) &= \begin{cases} 
 b \otimes e'_{\overline 1}(c)&\text{if }e'_{\overline1}(b) = f'_{\overline1}(b) = 0,
 \\
  e'_{\overline 1}(b) \otimes c
&\text{otherwise},
 \end{cases}
\\
  f'_{\overline 1}(b\otimes c) &= \begin{cases} 
 b \otimes f'_{\overline 1}(c)&\text{if }e'_{\overline1}(b) = f'_{\overline1}(b) = 0,
 \\
  f'_{\overline 1}(b) \otimes c
&\text{otherwise}.
 \end{cases}
 \ea\]
Finally,
  the natural maps $\cB \otimes (\cC \otimes \cD) \to (\cB \otimes \cC) \otimes \cD$ are
  $\sqgln$- and $\sqqn$-crystal isomorphisms.
\end{theorem}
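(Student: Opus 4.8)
The plan is to exploit the fact that the tensor product formulas defining $e'_i,f'_i$ (and $e'_{\bar 1},f'_{\bar 1}$) are \emph{formally identical} to the ordinary $\gl_n$- and $\q_n$-crystal rules in \eqref{ebc-eq} and \eqref{q-tensor1}--\eqref{q-tensor2}, with the square-root string lengths $\varepsilon'_i,\varphi'_i$ (and the condition $\weight(b)_1=\weight(b)_2=0$) playing the role of $\varepsilon_i,\varphi_i$. Since these formulas completely prescribe the weight map and all operators, uniqueness is automatic and the entire content is \emph{existence}: that the assignments satisfy Definitions~\ref{sqgln-def} and \ref{sqqn-def}. First I would record the two string-length identities
\[
\varphi'_i(b\otimes c) = \varphi'_i(b) + \max\bigl(\varphi'_i(c) - \varepsilon'_i(b),\,0\bigr)
\quand
\varepsilon'_i(b\otimes c) = \varepsilon'_i(c) + \max\bigl(\varepsilon'_i(b) - \varphi'_i(c),\,0\bigr),
\]
together with the mutual-inverse property $e'_i(b\otimes c)=U\neq 0\iff b\otimes c=f'_i(U)$. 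Both statements are purely combinatorial consequences of the bracketing (signature) analysis of the tensor rule, which refers only to the nonnegative integers $\varepsilon'_i(b),\varphi'_i(b)$ and to the fact that $e'_i$ decrements $\varepsilon'_i$ by one while incrementing $\varphi'_i$ by one (and $f'_i$ conversely); the weights play no role. Hence the argument of \cite[\S2.3]{BumpSchilling} transfers verbatim.

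With these in hand I would deduce Definition~\ref{sqgln-def}(a). The half-difference identity is immediate: the displayed formulas give $\varphi'_i(b\otimes c)-\varepsilon'_i(b\otimes c)=(\varphi'_i(b)-\varepsilon'_i(b))+(\varphi'_i(c)-\varepsilon'_i(c))$, which by the factor-level identities equals $2\bigl(\weight(b\otimes c)_i-\weight(b\otimes c)_{i+1}\bigr)$. For the evenness of $\varphi'_i(b\otimes c)+\varepsilon'_i(b\otimes c)$ I would use $\max(x,0)+\max(-x,0)=|x|\equiv x \pmod 2$ to reduce modulo $2$ to $(\varphi'_i(b)+\varepsilon'_i(b))+(\varphi'_i(c)+\varepsilon'_i(c))$, both summands even by hypothesis. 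For Definition~\ref{sqgln-def}(b) the inverse relation is already in hand, so only the weight-change clause remains. When $e'_i$ acts on $c$ (the case $\varepsilon'_i(b)\leq\varphi'_i(c)$) the second displayed formula gives $\varepsilon'_i(b\otimes c)=\varepsilon'_i(c)$ exactly, so the parity agrees and the change $\weight(e'_i(c))-\weight(c)$ inherited from $\cC$ is correct; when $e'_i$ acts on $b$ the formula gives $\varepsilon'_i(b\otimes c)=\varepsilon'_i(c)+\varepsilon'_i(b)-\varphi'_i(c)\equiv\varepsilon'_i(b)\pmod 2$ (using factor-level evenness of $\varepsilon'_i(c)+\varphi'_i(c)$), so again the parity matches.

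For the queer part I would verify Definition~\ref{sqqn-def}(a),(b) for $\bar 1$ by the same transfer from \cite[Thm.~1.8]{GJKKK}, since the $\bar 1$ rule branches only on whether $\weight(b)_1=\weight(b)_2=0$. The required commutation of $e'_{\bar 1},f'_{\bar 1}$ with $e'_i,f'_i$ for $3\leq i\leq n-1$ decouples cleanly: the operators $e'_i,f'_i$ change only coordinates $i,i+1\geq 3$ and so preserve $\weight_1$ and $\weight_2$, while $e'_{\bar 1},f'_{\bar 1}$ preserve each $\varepsilon'_i,\varphi'_i$ $(i\geq 3)$ on the factors and hence, through the string-length formulas, on $\cB\otimes\cC$. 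Thus the branch selected by either family is stable under the other, and commutation on $\cB\otimes\cC$ reduces to the factor-level commutation. Finally, the associativity map $b\otimes(c\otimes d)\mapsto(b\otimes c)\otimes d$ is manifestly a weight-preserving bijection; checking that it intertwines each operator is the identical three-factor signature bookkeeping used classically, performed now with $\varepsilon'_i,\varphi'_i$, so it too transfers for both the $\sqgln$- and $\sqqn$-structures.

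The step I expect to be the genuine obstacle is the weight-change clause of Definition~\ref{sqgln-def}(b), since it has no classical analogue: an individual $e'_i$ does not change the weight uniformly but alternately adds $\e_i$ or subtracts $\e_{i+1}$ according to the parity of $\varepsilon'_i$, so I must confirm that this parity is transported correctly to the tensor product rather than merely quoting the weight-independent bracketing. As indicated above, the two string-length formulas resolve this exactly—giving $\varepsilon'_i(b\otimes c)=\varepsilon'_i(c)$ in the one branch and $\varepsilon'_i(b\otimes c)\equiv\varepsilon'_i(b)\pmod 2$ in the other—so the remaining work is to carry out this parity bookkeeping carefully and to confirm that the same formulas, with the factor-level evenness hypotheses, also force the evenness assertion in part (a).
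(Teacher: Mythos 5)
Your proposal is correct and follows essentially the same route as the paper's proof: the same two string-length formulas for $\varepsilon'_i(b\otimes c)$ and $\varphi'_i(b\otimes c)$, the same mod-$2$ bookkeeping to get evenness and the half-difference identity in Definition~\ref{sqgln-def}(a), the same case analysis ($\varepsilon'_i(b\otimes c)=\varepsilon'_i(c)$ in one branch, $\equiv\varepsilon'_i(b)\pmod 2$ in the other) for the weight-change clause in part (b), and the same transfer of the mutual-inverse and associativity arguments from the classical setting of \cite[\S2.3]{BumpSchilling}. Your expanded discussion of the queer commutation conditions simply fills in details the paper dismisses as straightforward, so there is no substantive divergence.
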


\begin{proof}
Fix $i\in[n-1]$, $b \in \cB$, and $c \in \cC$. 
Then 
\[\varepsilon'_i(b\otimes c) = \max\{0, \varepsilon'_i(b) - \varphi'_i(c)\} + \varepsilon'_i(c)
\quand
\varphi'_i(b\otimes c) = \max\{0,  \varphi'_i(c) - \varepsilon'_i(b)\} + \varphi'_i(b).\]
So we have
$\varepsilon'_i(b\otimes c)  + \varphi'_i(b\otimes c)
\equiv 
\varepsilon'_i(b )  + \varphi'_i(b ) + \varepsilon'_i( c)  + \varphi'_i( c)\equiv 0 \modu 2)$
as well as
\[
\tfrac{\varphi'_i(b\otimes c)  - \varepsilon'_i(b\otimes c)}{2}=
\tfrac{\varphi'_i(b )  - \varepsilon'_i(b )}{2} + \tfrac{\varphi'_i( c)  - \varepsilon'_i( c)}{2}
= \weight(b\otimes c)_i - \weight(b\otimes c)_{i+1}
.\]

It is clear that $e'_i$ and $f'_i$ act as inverse operators 
when they are nonzero.
Suppose   $e'_i(b\otimes c)\neq 0$. If $\varepsilon_i'(b) \leq \varphi'_i(c)$ then 
$
\weight(e'_i(b\otimes c)) - \weight(b\otimes c)
=
\weight(e'_i(c)) - \weight(c)
$
and $ \varepsilon_i'(b\otimes c) = \varepsilon_i'(c).$
If $\varepsilon_i'(b) > \varphi'_i(c)$ then 
$
\weight(e'_i(b\otimes c)) - \weight(b\otimes c)
=
\weight(e'_i(b)) - \weight(b)
$ and \[ \varepsilon_i'(b\otimes c) =
\varepsilon'_i(b) - \varphi'_i(c) + \varepsilon'_i(c)
\equiv \varepsilon'_i(b) +\varphi'_i(c) + \varepsilon'_i(c)
\equiv 
 \varepsilon_i'(b)\modu 2).\]
Therefore, property (b) in Definition~\ref{sqgln-def} for $\cB$ and $\cC$
implies the same property for $\cB \otimes \cC$. We conclude that 
$\cB \otimes \cC$ is a $\sqgln$-crystal.

The conditions we need to check to show that $\cB\otimes \cC$ 
is a $\sqqn$-crystal when $\cB$ and $\cC$ are $\sqqn$-crystals 
are all straightforward. Notice in particular that the first two components of the weight of $b\otimes c$ are both zero
if and only if the same is true of both $b$ and $c$, as we require all weights to be in $ \NN^n$.

Finally, the argument to show that the natural map $\cB \otimes (\cC \otimes \cD) \to (\cB \otimes \cC) \otimes \cD$ is a
  $\sqgln$- or $\sqqn$-crystal isomorphism is the same as the proof of
  the associativity of the tensor product for ordinary $\gl_n$- or $\q_n$-crystals; see \cite[\S2.3]{BumpSchilling}.
\end{proof}

\begin{remark}
The tensor products for $\gl_n$- and $\sqgln$-crystals
(respectively, $\q_n$- and $\sqqn$-crystals)
do not commute with the ``squaring'' operation in Proposition~\ref{sq-prop}:
if $\cB^{(2)}$ denotes the 
crystal obtained from $\cB$ via that result,
then one can have $\cB^{(2)} \otimes \cC^{(2)} \not\cong (\cB\otimes \cC)^{(2)}$.
It does hold as usual that   $\ch(\cB\otimes \cC) = \ch(\cB)\ch(\cC)$ when $\cB$ and $\cC$ are both finite, however.
\end{remark}

Fix $m \in \NN$ and recall the definition of $\SS_n$ from Example~\ref{SS-ex}. There is an obvious weight-preserving bijection
$\SS_n^{\otimes m} \to \SVWords_n(m)$ given by identifying tensors with tuples, which sends \be\label{obvSS} 
S_1\otimes S_2 \otimes \cdots \otimes S_m \mapsto (S_1,S_2,\dots,S_m) \in \SVWords_n(m).
\ee
The following theorem shows that the $\sqqn$-crystal structure on $\SVWords_n(m)$,
which may have seemed unmotivated in Section~\ref{sq-words-sect}, is in fact the   one induced by this bijection. This statement implies Theorem~\ref{sqqn-thm}, and so provides an
alternate proof that result.

\begin{theorem}\label{sq-tensor-thm2}
The bijection \eqref{obvSS} is an isomorphism of $\sqqn$-crystals $\SS_n^{\otimes m} \xrightarrow{\sim} \SVWords_n(m)$.
\end{theorem}

\begin{proof}
Fix $m\geq 2$ and $T= (T_1,T_2,\dots,T_m) \in \SVWords_n(m)$. Let $U = (T_2,\dots,T_m)$.
Recall that $\SS_n \cong \SVWords_n(1)$ as $\sqqn$-crystals.
To show that the map $\SS_n^{\otimes m} \to \SVWords_n(m)$ is an isomorphism of $\sqgln$-crystals, it suffices by induction on $m$ to check 
for each $i \in [n-1]$ that 
\[
e'_i(T) = \begin{cases}
(T_1, e'_i(U)) &\text{if }\varepsilon'_i(T_1) \leq \varphi'_i(U) \\
(e'_i(T_1),U) &\text{if }\varepsilon'_i(T_1) > \varphi'_i(U)
\end{cases}
\quand
f'_i(T) = \begin{cases}
(T_1, f'_i(U)) &\text{if }\varepsilon'_i(T_1) < \varphi'_i(U) \\
(f'_i(T_1) , U) &\text{if }\varepsilon'_i(T_1) \geq \varphi'_i(U),
\end{cases}
\]
where we set 
\[(T_1, e'_i(U)):=\begin{cases} 0 &\text{if $e'_i(U)=0$} \\
(T_1,V_2,\dots,V_m)&\text{if $e'_i(U) = (V_2,\dots,V_m)\neq 0$,}
\end{cases}\]
 and interpret  $(e'_i(T_1),U)$, $(T_1, f'_i(U))$,
and $(f'_i(T_1) , U)$ similarly. We will just prove the formula for $e'_i(T)$ since the argument 
for $f'_i(T)$ is not much different.

Recall that $\varphi'_i(U)$ is twice the number of right forms in the $i$-word of $U$ plus one if there is a combined form.
On the other hand, $\varepsilon'_i(T_1)$ is $2$ if $T_1 \cap \{i,i+1\} =\{i+1\}$,
$1$ if $T_1 \cap \{i,i+1\} = \{i,i+1\}$, and $0$ otherwise.
The three possibilities here combine with the two in our desired formula for $e'_i(T)$ to give six different cases:
\bei
\item[(1)] Suppose $\varepsilon'_i(T_1)=2 \leq \varphi'_i(U)$.
Then the $i$-word of $U$ must have at least one right form,
and the first such form will merge with the ``$($'' contributed by $T_1$ to create a null form in the $i$-word of $T$. In this case,
the $i$-word of $T$ will have a combined form (respectively, a left form) if and only if the $i$-word of $U$ does as well, and the entries that correspond to ``$)-($'' at the end of the combined forms (respectively, ``$($'' at the start of the first left forms) in $T$ and $U$ coincide,
so $e'_i(T) = (T_1, e'_i(U))$. 

\item[(2)] Suppose $\varepsilon'_i(T_1)=1 \leq \varphi'_i(U)$.
Then the $i$-word of $U$ must have a right  or combined form.
If there is a right form, then it will   merge with the ``$)-($'' contributed by $T_1$ to create the first right form in the $i$-word of $T$,
and
 it follows exactly as in case (1) that $e'_i(T) = (T_1, e'_i(U))$.
If there are no right forms, then the unique   combined form in the $i$-word of $U$ 
 will merge with the ``$)-($'' contributed by $T_1$ to create a larger combined form in the $i$-word of $T$. When this happens, the entries that correspond to ``$)-($'' at the end of the combined forms  in $T$ and $U$ coincide,
so again $e'_i(T) = (T_1, e'_i(U))$.

\item[(3)] Suppose $\varepsilon'_i(T_1)=0 \leq \varphi'_i(U)$.
Then the $i$-word of $T$ is either identical to the $i$-word of $U$,
or is given by appending the $i$-word of $U$ after the single right form  ``$)$''.
Either way, the same reasoning as in case (1) shows that  $e'_i(T) = (T_1, e'_i(U))$.

 \item[(4)] Suppose $\varepsilon'_i(T_1)=2 > \varphi'_i(U)$.
 Then the $i$-word of $U$ must have no right forms.
 If the $i$-word of $U$ has a combined form, then it will merge with the ``$($'' contributed by $T_1$
 to create the first left form in the $i$-word of $T$, which will have no combined forms.
 If there is no combined form in the $i$-word of $U$ then the ``$($'' contributed by $T_1$
 will become on its own the first left in the $i$-word of $T$.
 In both situations, the $i$-word of $T$ has no combined forms and $T_1$ is the entry that corresponds to the ``$($'' at the start of the first left form, so   $e'_i(T) = (e'_i(T_1), U)$.
 
  \item[(5)] Suppose $\varepsilon'_i(T_1)=1 > \varphi'_i(U)$.
   Then the $i$-word of $U$ must have no right or combined forms,
   so  the ``$($'' contributed by $T_1$ will become  the first left in the $i$-word of $T$,
   which will also have no combined forms.
   Then, it follows exactly as in case (4) that $e'_i(T) = (e'_i(T_1), U)$.
   
     \item[(6)] The final case $\varepsilon'_i(T_1)=0 > \varphi'_i(U)$ is impossible since $\varphi'(U) \in \NN$.
   
\eei
This cases analysis shows that $e'_i(T)$ does have the desired formula in the equation displayed above.
We conclude that $\SS_n^{\otimes m} \to \SVWords_n(m)$ is an isomorphism of $\sqgln$-crystals.

To upgrade this to a $\sqqn$-isomorphism,
it suffices (again by induction on $m$)  just   to show that
\[
  e'_{\overline 1}(T) = \begin{cases} 
 (T_1,   e'_{\overline 1}(U))&\text{if }1,2\notin T_1 
 \\
(  e'_{\overline 1}(T_1) ,U)
&\text{otherwise}
 \end{cases}
\quand
  f'_{\overline 1}(T) = \begin{cases} 
 (T_1,   f'_{\overline 1}(U))&\text{if }1,2\notin T_1
 \\
 ( f'_{\overline 1}(T_1) , U)
&\text{otherwise}.
 \end{cases}
 \]
These formulas are more or less immediate from the relevant
Definitions~\ref{e'1-def} and \ref{f'1-def}.
\end{proof}

\begin{corollary}\label{pq-cor}
If $p,q \in \NN$ then $\SVWords_n(p)\otimes \SVWords_n(q) \cong \SVWords_n(p+q)$.
\end{corollary}

We mention an interesting $\sqgln$-crystal theoretic interpretation
of the coefficients  in the symmetric Grothendieck expansion of the product $G_\lambda G_\mu = \sum_\nu c_{\lambda\mu}^\nu G_\nu$.
 The coefficients $ c_{\lambda\mu}^\nu$ in this expansion were first shown to be nonnegative integers by Buch \cite{Buch2002}.
 Using Lemma~\ref{sv-highest-lem} and Corollary~\ref{pq-cor}, we can reformulate 
 Buch's description of these numbers as follows:

\begin{theorem}[Buch \cite{Buch2002}]
\label{buch-cor}
 If $\lambda$ and $\mu$ are partitions then
\[G_\lambda(x_1,x_2,\dots,x_n)G_\mu(x_1,x_2,\dots,x_n) =
\sum_{
\substack{b \in \SetTab_n(\lambda)\otimes \SetTab_n(\mu) \\
e'_i(b) =0\ \forall i \in[n-1]}}
G_{\weight(b)}(x_1,x_2,\dots,x_n).
\]
\end{theorem}

\begin{proof}
We attribute this theorem to Buch as it is almost  immediate from \cite[Thm.~5.4]{Buch2002}.
That result
is equivalent to the statement that 
$
\textstyle G_\lambda(x_1,\dots,x_n) G_\mu(x_1,\dots,x_n) = \sum G_{\weight(T\otimes U)} (x_1,\dots,x_n)$
where the sum is over 
 tensors $T \otimes U \in \SetTab_n(\lambda)\otimes \SetTab_n(\mu)$  such that $w(\svcol(T)\svcol(U))$ is a reverse lattice word, with $w(T)$ defined as in  Lemma~\ref{sv-highest-lem}.
By Corollary~\ref{pq-cor}, we can embed \[\SetTab_n(\lambda)\otimes \SetTab_n(\mu) \subseteq 
\SVWords_n(|\lambda|)\otimes \SVWords_n(|\mu|) \cong \SVWords_n(|\lambda|+|\mu|) \]
and then identify $T\otimes U$ with the concatenation $\svcol(T)\svcol(U)$,
so it follows from Lemma~\ref{sv-highest-lem} that $w(\svcol(T)\svcol(U))$ is a reverse lattice word 
if and only if $e'_i(T\otimes U) =0$ for all $i \in[n-1]$.
\end{proof}

The preceding result does not lift to crystals, as
the full subcrystal of   
$T\in \SetTab_n(\lambda)\otimes \SetTab_n(\mu)$  with $e'_i(T)=0$ for all $ i \in[n-1]$
is not always isomorphic to $\SetTab_n(\weight(T))$.

\begin{remark}\label{buch-rmk}
If $\lambda$ and $\mu$ are strict partitions then
$\GP_\lambda(x_1,\dots,x_n)\GP_\mu(x_1,\dots,x_n)$ is an
$\NN$-linear combination $\GP$-functions $\GP_\nu(x_1,\dots,x_n)$ \cite{CTY,PechenikYong}.
However, the coefficients in this expansion appear to be overcounted
by the $\sqqn$-highest weights  in 
$\SetDTab_n(\lambda)\otimes \SetDTab_n(\mu)$, at least as defined in Conjecture~\ref{sqqn-conj}. To be precise, in all examples   we can compute, it holds that
\be\label{GPGP}
\GP_\lambda(x_1,\dots,x_n)\GP_\mu(x_1,\dots,x_n) \preceq
\sum_{
\substack{b \in \SetDTab_n(\lambda)\otimes \SetDTab_n(\mu) \\
e'_i(b) =e'_{\overline{i}}(b)=0\ \forall i \in[n-1]}}
\GP_{\weight(b)}(x_1,\dots,x_n)
\ee
where we write $f \preceq g$ to
mean $g-f$ is an $\NN$-linear combination of $\GP$-polynomials. 
\end{remark}

We use the tensor products in Theorem~\ref{sq-tensor-thm} to introduce families of \defn{normal $\sqgln$-crystals} and  \defn{normal $\sqqn$-crystals}:
 we define these to be the smallest full subcategories 
of crystals 
  that contain all tensor powers of
the standard object $\SS_n$ and that are closed under disjoint unions and restriction to any connected component of the crystal graph.
Theorems~\ref{sqgln-thm}, \ref{sqqn-thm2}, and \ref{sq-tensor-thm2}
imply that:
 
 \begin{corollary}
 The objects $\SVWords_n(m)$ and $\SetDTab_n(\lambda)$ are   normal as both $\sqgln$- and $\sqqn$-crystals,
 while
$\SetTab_n(\lambda)$ is a normal $\sqgln$-crystal.
 \end{corollary}
 
From Lemma~\ref{sv-highest-lem}, we also know that:

\begin{corollary}
In a normal $\sqgln$-crystal,
the weight of any highest weight element (that is, an element $b$ with $e'_1(b) =e'_2(b) =\dots= e'_{n-1}(b)=0$) is a partition with at most $n$ nonzero parts.
\end{corollary}

The categories of normal $\sqgln$- and $\sqqn$-crystals are automatically closed under tensor products and disjoint unions. However, they are not so well-behaved
as their classical counterparts.
Every connected normal 
$\gl_n$- or $\q_n$-crystal has a unique highest weight element,
whose weight uniquely determines the crystal's isomorphism class,
and all finite normal $\gl_n$- or $\q_n$-crystals with the same character are isomorphic (see \cite[\S1.2]{MT2023}).
By contrast:
\begin{itemize}
\item There are
connected normal $\sqgln$- and $\sqqn$-crystals with multiple highest weight elements.
For example, there is a full $\sqrt{\gl_3}$-subcrystal of $\SVWords_3(4)$ with two highest weight elements
\[ (\{1,3\}, \{1\},\{1,2\},\{1\})  \quand
(\{1,2,3\}, \{1\},\{1,2\},\{1\}) 
\]
whose character is $G_{(4,1,1)}(x_1,x_2,x_3)+G_{(4,2,1)}(x_1,x_2,x_3)$,
and there is a 
 full $\sqqn$-subcrystal of $\SVWords_3(5)$ with two highest weight elements 
\[ (\{1\}, \{2\},\{1\},\{1,2\},\{1\})  \quand
(\{1\}, \{2,3\},\{1\},\{1,2\},\{1\})  
\]
whose character is $\GP_{(4,2)}(x_1,x_2,x_3)$.

\item Among the connected normal $\sqgln$- and $\sqqn$-crystals that do have unique highest weight elements, there exist non-isomorphic objects with the same highest weight. 
For example, \[(\{1\},\{2\},\{1\}) \in \SVWords_3(3)\]
is the unique highest weight element of a full $\sqrt{\gl_3}$-subcrystal
that is not isomorphic to $\SetTab_3((2,1))$, although both have character $G_{(2,1)}(x_1,x_2,x_3)$.
Similarly, \[(\{1\}, \{2\}, \{1\}, \{1\}) \in \SVWords_3(4)\]
is the unique highest weight element of a full $\sqrt{\q_3}$-subcrystal
that is not isomorphic to $\SetDTab_3((3,1))$, although both have character $\GP_{(3,1)}(x_1,x_2,x_3)$.

\item  We expect that the normal crystals $\SetTab_n(\lambda)$ and $\SetDTab_n(\lambda)$  have unique highest weight elements of weight $\lambda$; this is known for  $\SetTab_n(\lambda)$ but only conjectural for $\SetDTab_n(\lambda)$.
One might hope that taking the isomorphism classes 
of just these set-valued tableau crystals would give good substitutes for the ``too large'' categories of
normal $\sqgln$- and $\sqqn$-crystals. However, neither of the resulting subcategories is closed under tensor products.
For example,     
\[ 
\SetTab_3((1))\otimes \SetTab_3((2))
\quand
\SetDTab_3((1))\otimes \SetDTab_3((2))
\]
each have a full subcrystal of unique highest weight $(3,1)$.
The subcrystal of the first object is 
not isomorphic to $\SetTab_3((3,1))$ as a $\sqrt{\gl_3}$-crystal,
and the subcrystal of the second object is
not isomorphic to $\SetDTab_3((3,1))$ as a $\sqrt{\q_3}$-crystal.

\end{itemize}

Despite these observations, 
normal $\sqgln$- and normal $\sqqn$-crystals
do appear to have some interesting properties, and 
there seems to be a close connection between
these crystal categories and
the rings of symmetric functions generated by $\{G_\lambda\}$ and $\{\GP_\lambda\}$.
This speculative relationship is indicated by the conjectures explained in the next section.

\subsection{Conjectures}\label{conj-sect}

We use this  final section to present several conjectures about normal $\sqgln$- and $\sqqn$-crystals 
that would be interesting to explore in future work.

First, we say that an element of $\ZZ[x_1,\dots,x_n]$
is \defn{$G$-positive}
(respectively, \defn{$\GP$-positive})
if it is an $\NN$-linear combination of 
 polynomials of the form $G_\lambda(x_1,\dots,x_n)$
(respectively, $\GP_\lambda(x_1,\dots,x_n)$). 

\begin{example}
The character of any tensor power of  $\SS_n$
is both  $G$- and $\GP$-positive since 
\[ \ch(\SS_n^{\otimes m}) = \ch(\SS_n)^m = G_{(1)}(x_1,x_2,\dots,x_n)^m
= \GP_{(1)}(x_1,x_2,\dots,x_n)^m\]
and the Pieri rules in \cite{Buch2002,BuchRavikumar}
expand the last expression as a sum of $G$- or $\GP$-functions.
\end{example}

Although we have $\ch(\SetTab_n(\lambda)) = G_\lambda(x_1,\dots,x_n)$
and, conjecturally,
$\ch(\SetDTab_n(\lambda)) = \GP_\lambda(x_1,\dots,x_n)$,
the characters of other connected normal $\sqgln$- and $\sqqn$-crystals
are not single $G$- or $\GP$-polynomials. However, the following holds in all examples we can compute:

\begin{conjecture}
The character of any finite normal $\sqgln$-crystal is  
$G$-positive.
\end{conjecture}

\begin{conjecture}
The character of any finite normal $\sqqn$-crystal is
$\GP$-positive.
\end{conjecture}

A more precise version of the first conjecture, generalizing Theorem~\ref{buch-cor}, also seems to hold:
\begin{conjecture}\label{cBb-conj}
If $\cB$ is a finite normal $\sqgln$-crystal then
$\ch(\cB) = \sum_b G_{\weight(b)}(x_1,\dots,x_n)$
where the sum is over all $b \in \cB$
with $e'_i(b) =0$ for all $i \in [n-1]$.
\end{conjecture}

We have checked these conjectures by computer for all subcrystals of 
$\SS_n^{\otimes m}\cong \SVWords_n(m)$ when 
$n=2$ and $m\leq 12$,
$n=3$ and $m\leq 7$,
$n=4$ and $m\leq 5$,
$n=5$ and $m\leq 4$, or 
$n=6$ and $m\leq 3$.
One might try to prove them
by identifying set-valued analogues of 
\defn{RSK insertion} or Haiman's \defn{shifted mixed insertion} \cite{HaimanMixed},
perhaps extending the \defn{uncrowding algorithms} in \cite{Buch2002,PPPS2022}.

As in Remark~\ref{buch-rmk}, it is not clear how to convert Conjecture~\ref{cBb-conj} 
to a formula for the $\GP$-expansion of the character of
 a finite normal $\sqqn$-crystal.
 Like Theorem~\ref{buch-cor},
the character formula in Conjecture~\ref{cBb-conj}  does not lift to an isomorphism of $\sqgln$-crystals.

 We highlight the special case of Conjecture~\ref{cBb-conj} with $\cB =  \SetDTab_n(\lambda)$.
This asserts that the $\sqgln$-highest weight elements in
 $\SetDTab_n(\lambda)$
 encode the $G$-expansion of its character,
 which via Conjecture~\ref{Ik-conj} is expected to be 
 $\GP_\lambda(x_1,x_2,\dots, x_n)$. The coefficients in the $G$-expansion of $\GP_\lambda$ are already known to be nonnegative integers by \cite[Thm.~3.27]{MarScr}.

\begin{conjecture}\label{GPG-conj}
If $\lambda$ is a strict partition then
\[\GP_\lambda(x_1,x_2,\dots,x_n) =
\sum_{
\substack{T \in \SetDTab_n(\lambda)  \\
e'_i(T) =0\ \forall i \in[n-1]}}
G_{\weight(T)}(x_1,x_2,\dots,x_n).
\]
\end{conjecture}

We have checked this conjecture  when
 $n=2$ and $|\lambda| \leq 30$,
$n=3$ and $|\lambda| \leq 12$,
$n=4$ and $|\lambda| \leq 8$,
$n=5$ and $|\lambda| \leq 6$, or
$n=6$ and $|\lambda| \leq 4$.
 
Recall the definition of $w(T)$ for $T \in \SVWords_n(m)$ from Lemma~\ref{sv-highest-lem}.
Via that result and Theorem~\ref{sqqn-thm2},
Conjecture~\ref{GPG-conj} is equivalent to the following statement: 
 
\begin{conjecture}
If $\lambda$ is a strict partition then $\GP_\lambda = \sum_\nu d_{\lambda\mu} G_\mu$ where the sum is over strict partitions $\mu$
and $d_{\lambda\mu}$ is the number of set-valued decomposition tableaux $T$
of shape $\lambda$
and weight $\weight(T) =\mu$ such that $w(\svrevrow(T))$ is a reverse lattice word.
\end{conjecture}

The conjectures above are plausible in view of the following observations.
We have seen that the character of any finite $\sqgln$-crystal is symmetric.
It is well-known that any symmetric polynomial in $\ZZ[x_1,\dots,x_n]$ is a $\ZZ$-linear combination of 
symmetric Grothendieck polynomials. In addition:

\begin{proposition}\label{sqrt-char-prop}
The character of any finite normal $\sqqn$-crystal is symmetric with the $K$-theoretic $Q$-cancelation property,
 so is a $\ZZ$-linear combination of $\GP$-polynomials  $\GP_\lambda(x_1,\dots,x_n)$.
\end{proposition}

\begin{proof}
We prove this when $\cB$ is a connected normal $\sqgln$-crystal.
We may assume that $\cB \subseteq \SVWords_n(m)$ for some $m$. We already know that $\ch(\cB)$
is symmetric by Proposition~\ref{sq-prop} so we just need to verify that it has the  $K$-theoretic $Q$-cancelation property.

Let $\cB^\ast$ be the set of \defn{multiset-valued words}
$S = (S_1,S_2,\dots,S_m)$ with the following properties: (a) in each multiset $S_i$
only the number $1$ may be repeated, and (b) when the repeated $1$'s are all replaced by a single copy, one obtains a set-valued word $\overline S \in\cB$. 
Then 
 \[ 
 \ch(\cB)(\tfrac{x_1}{1-x_1}, -x_2,x_3,\dots,x_n) =  \sum_{S \in \cB^\ast} (-1)^{c_2(S)} x_1^{c_1(S)} x_2^{c_2(S)} x_3^{c_3(S)} \cdots x_n^{c_n(S)}
 \]
 where $c_j(S)$ denotes the multiplicity of $j$ in the multiset union $S_1\cup S_2 \cup \dots \cup S_m$.
 
  As in the proof of Proposition~\ref{kca-prop}, to show that $\ch(\cB)$ has the $K$-theoretic $Q$-cancelation property, we must demonstrate that the right side of the previous equation is independent of $x_1$ when we set $x_1=x_2$.
  For this, it is enough to produce an involution of $\phi:\cB^\ast\to\cB^\ast$ 
  such that $\phi(S) = S$ when $c_1(S) =c_2(S)=0$
  and such that for all other $S \in \cB^\ast$ we have 
\begin{itemize}
\item $c_2(\phi(S)) = c_2(S) \pm 1$,
\item $c_1(\phi(S)) + c_2(\phi(S)) = c_1(S) + c_2(S)$, and 
 \item $ c_j(\phi(S)) = c_j(S)$ for $2 <j<n$.
 \end{itemize}
  Here is such an  involution. Suppose $i \in [m]$ is minimal with $1 \in S_i$ or $2 \in S_i$.
  If there is no such index then set $\phi(S) := S$. Otherwise, 
 form $\phi(S)$ from $S$ by modifying $S_i$ as follows.
 If there is already a (unique) copy of $2 \in S_i$, then remove this $2$ and add an extra copy of $1$.
 If $2 \notin S_i$, then instead replace one copy of $1 \in S_i$ by $2$.
 
Clearly $\phi(S)$ has the desired properties and $\phi(\phi(S))=S$.
It only remains to justify why $\phi(S) \in \cB^\ast$. Recall the definition of $\overline{S} \in \cB$
from the first paragraph of this proof. It is enough to check that $\overline{\phi(S)} \in \cB$. 
If $2 \in S_i$ and $1 \in S_i$, then $\overline{\phi(S)} = e'_{\bar 1}(\overline{S})$, which certainly belongs to $\cB$.
Likewise, if $2 \in S_i$ and $1 \notin S_i$, then  $\overline{\phi(S)} =e'_{\bar 1} (e'_{\bar 1}(\overline{S})) \in \cB$.
Finally, if $2 \notin S_i$ then 
 $\overline{\phi(S)}  = f'_{\bar 1}(\overline{S})$ when $c_1(S) > 1$ and
  $\overline{\phi(S)}  = f'_{\bar 1}( f'_{\bar 1}(\overline{S}))$ when $c_1(S) = 1$,
  and in both cases $\overline{\phi(S)} \in\cB$ as needed.
\end{proof}

Finally, we recall
from  \cite[Ex.~5.2]{BumpSchilling}
 that
if $\cB$ is a $\gl_n$-crystal then its \defn{Lusztig dual} $\cB^\vee$
is the $\gl_n$-crystal on the same underlying set
with weight map $\weight^\vee(b) := (\weight(b)_n,\dots,\weight(b)_2,\weight(b)_1)$
and crystal operators $e_i^\vee := f_{n-i}$ and $f_i^\vee := e_{n-i}$.
We define the \defn{Lusztig dual} of a $\sqgln$-crystal in the same way,
setting ${e'_i}^\vee := f'_{n-i}$ and ${f'_i}^\vee := e'_{n-i}$.

It easy to check that the operators on $\cB^\vee$ satisfy the relevant $\gl_n$- or $\sqgln$-crystal axioms, and
that the map $b\otimes c \mapsto c\otimes b$
is a crystal isomorphism  $(\cB \otimes \cC)^\vee \cong \cC^\vee \otimes \cB^\vee$.
Moreover, one always has $\BB_n \cong \BB_n^{\vee}$ as $\gl_n$-crystals and $\SS_n \cong \SS_n^\vee$
as $\sqgln$-crystals.
It follows as a consequence that:

\begin{proposition}
If $\cB$ is a connected normal $\gl_n$- or $\sqgln$-crystal, then so is $\cB^\vee$.
\end{proposition}

Any connected normal $\gl_n$-crystal is isomorphic to its Lusztig dual, which has the same unique highest weight.
This property does not hold for all connected normal $\sqgln$-crystals,
but does apply in the following cases.

A partition $\lambda$ is a \defn{rectangle} if its nonzero parts all have the same size, and a \defn{fat hook} if it has exactly two distinct part sizes,
so that
$|\{\lambda_1,\lambda_2,\dots\}\setminus\{0\}| =2$.

\begin{proposition}
Suppose $\lambda$ is a partition with $\ell(\lambda) \leq n$.
Then
 $\SetTab_n(\lambda) \cong \SetTab_n(\lambda)^\vee$ as $\sqgln$-crystals
 if 
  $\lambda$ is a rectangle, or if $\lambda$ is a fat hook with $\ell(\lambda) = n$.
 \end{proposition}

\begin{proof}
First assume that $\lambda$ is a rectangle. Given $T \in \SetTab_n(\lambda)$
form $ T^\vee$  by rotating the tableau $180^\circ$ and then applying the substitution $S \mapsto n+1-S$ to each set-valued entry.
The resulting map $T \mapsto T^\vee$ is an involution of $\SetTab_n(\lambda)$.
(When $T$ is not set-valued, this operation 
is the classical \defn{Sch\"utzenberger involution} \cite[\S2.4]{Lenart2007}.)
The set-valued column reading word of $T^\vee$ is formed by reversing $\svcol(T)$
and then applying the substitution $S \mapsto n+1-S$ to each entry.
Once we make this observation, checking that $T \mapsto T^\vee$ is an isomorphism 
$\SetTab_n(\lambda) \cong \SetTab_n(\lambda)^\vee$ is a straightforward exercise.

Now suppose that $\lambda$ is a fat hook with $\ell(\lambda) = n$.
Let $m\geq 1$ be the number of columns in the diagram of $\lambda$
with $n$ boxes. Then each $T \in \SetTab_n(\lambda)$, being semistandard,
must have $T_{ij} = \{i\}$ for all $(i,j)\in [n]\times [m]$,
and removing these boxes yields an arbitrary element $U \in \SetTab_n(\mu)$
where $\mu$ is the rectangular partition $(\lambda_1-m,\lambda_2-m,\dots,\lambda_n - m)$.
Define $T^\vee$ from $T$ by leaving the first $m$ columns unchanged
and replacing $U$ by $U^\vee$ as given in the previous paragraph.
It is again a simple exericise to check that   $T \mapsto T^\vee$ is a 
crystal isomorphism  $\SetTab_n(\lambda) \cong \SetTab_n(\lambda)^\vee$.
\end{proof}

Our computations suggest that the converse to the previous result holds.

\begin{conjecture}
Suppose $\lambda$ is a partition with $\ell(\lambda) \leq n$.
Then
$\SetTab_n(\lambda) \cong \SetTab_n(\lambda)^\vee$ 
as $\sqgln$-crystals
only if  $\lambda$ is a rectangle,
or   $\lambda$ is a fat hook
with $\ell(\lambda)=n$.
\end{conjecture}

We require $\lambda$ to have $\ell(\lambda)\leq n$ in these statements
since otherwise $\SetTab_n(\lambda)$ is empty.
We have checked by computer that this conjecture holds 
at least 
for 
$n\leq 5$ and $|\lambda|\leq 12$.

\printbibliography

\end{document}